\numberwithin{equation}{section}
\newcounter{mnotecount}[section]
\theoremstyle{plain}
\newtheorem{theorem}{Theorem}
\newtheorem*{theorem*}{Theorem}
\newtheorem{corollary}{Corollary}
\newtheorem{lemma}{Lemma}
\newtheorem{proposition}{Proposition}
\theoremstyle{definition}
\newtheorem{definition}{Definition}
\newtheorem{remark}{Remark}
\begin{document}

\title{The Semi-Classical Limit from the Dirac Equation with Time-Dependent External Electromagnetic Field to Relativistic Vlasov Equations
}
\author[a]{François Golse}
\author[b]{Nikolai Leopold}
\author[c,d]{Norbert J. Mauser}
\author[a,c,d]{Jakob Möller}
\author[e,f]{Chiara Saffirio}
\affil[a]{CMLS, École Polytechnique, F-91128 Palaiseau}
\affil[b]{Constructor Univ., Bremen}
\affil[c]{Research Platform MMM "Mathematics-Magnetism-Materials" c/o Fak. Math., Univ. Wien, A-1090 Vienna}
\affil[d]{Wolfgang Pauli Institut, A-1090 Vienna}
\affil[e]{Albert-Ludwigs-Univ. Freiburg}
\affil[f]{Univ. Basel}
\maketitle
\begin{abstract}
We prove the mathematically rigorous (semi-)classical limit  
$\hbar  \to 0$ 
of the Dirac equation with time-dependent external electromagnetic field to relativistic Vlasov equations with Lorentz force for electrons and positrons. In this limit antimatter and spin remain as intrinsically relativistic effects on a classical level. Our global-in-time results use Wigner transforms and a Lagrange multiplier viewpoint of the matrix-valued Wigner equation. In particular, we pass to the limit in the ``full" Wigner matrix equation without projecting on the eigenspaces of the matrix-valued symbol of the Dirac operator. In the limit, the Lagrange multiplier maintains the constraint that the Wigner measure and the symbol of the Dirac operator commute and vanishes when projected on the electron or positron eigenspace. This is a different approach to the problem as discussed in [P. Gérard, P. Markowich, N.J. Mauser, F. Poupaud: Comm. Pure Appl. Math. 50(4):323–379, 1997], where the limit is taken in the projected Wigner equation. By explicit calculation of the remainder term in the expansion of the Moyal product we are able to generalize to time-dependent potentials with much less regularity. We use uniform $L^2$ bounds for the Wigner transform, which are only possible for a special class of mixed states as initial data. \\

\textbf{Keywords:} Dirac Equation, Semiclassical Limit, Wigner Transform, Relativistic Quantum Mechanics, Relativistic Vlasov Equation

\textbf{MSC:} 81S30, 35Q40
\end{abstract}


\section{Introduction}

In relativistic quantum electrodynamics a fast electric charge $q$ of spin $\tfrac{1}{2}$ (fermion) is described by the Dirac equation for a 4-spinor $\Psi^{\hbar} \in L^2(\mathbb{R}^3, \mathbb{C}^4)$ containing an electron and a positron component with two spin directions each. The Dirac equation is given by
\begin{equation}
    i\hbar \partial_t \Psi^{\hbar} =   H \Psi^{\hbar} :=  \left(\alpha \cdot (-i\hbar \nabla - \frac{q}{c}A) + \beta - q A_0\right)\Psi^{\hbar},
    \label{eq:dirac equation spinor}
\end{equation}
where 
$A_0\in \mathbb{R}$ is the scalar electric potential and $A \in \mathbb{R}^3$ is the magnetic vector potential. The $4\times 4$ matrices $\alpha, \beta$ are the Dirac matrices, defined below in \eqref{eq:dirac matrices}.  

We explicitly denote the dependence of $\Psi^{\hbar}$ on the Planck constant $\hbar$ by a superscript since we are interested in the $\hbar$ asymptotics\footnote{$\hbar\approx 1.055 \cdot 10^{-34} \mathrm{m^2 kg / s}$ appears as small in an appropriate semiclassical scaling.} and we later rescale the equation such that $\hbar = \varepsilon$ is a small parameter. 

The electromagnetic potential $(A_0,A)$ in the matrix-valued Dirac Hamilton operator $H$ can be given externally or be computed self-consistently, yielding the nonlinear Dirac-Maxwell equation \cite{thaller1992dirac}.
Among many related mathematical questions we mention the asymptotic analysis for ``unbounded speed of light" $c \to \infty$, i.e. the ``non-relativistic limit" (or ``Post-Newtonian limit") towards Schrödinger- and Pauli equations \cite{bechouche1998semi,itzykson2012quantum}, which were justified rigorously even for the nonlinear Dirac-Maxwell equation in \cite{bechouche2005asymptotic, masmoudi2003nonrelativistic}).

In this work we deal with another important asymptotic of the Dirac equation, the ``(semi-)classical limit"  
of vanishing Planck constant  $\hbar \to 0$ towards fully relativistic classial physics as described by relativistic Vlasov (Liouville) equations with Lorentz force for phase space density functions. Wigner methods allow for such global in time limits towards kinetic equations (the other main technique would be WKB methods for small time results towards fluid-type equations).

For the linear Dirac equation with time-independent external electromagnetic potentials the first result for the global-in-time semiclassical limit was given in the seminal paper of Gérard, Markowich, Mauser, Poupaud \cite{gerard1997homogenization} where ``Wigner matrices" were introduced. In the remarkable subsequent paper of Spohn \cite{spohn2000semiclassical}, these Wigner transform techniques were used to derive – by taking the semiclassical limit of the Dirac equation – models like the {BMT equation} which are classical equations for the spinning electron. Let us mention that Spohn found a small mistake in \cite{gerard1997homogenization}, corrected in \cite{gerard2000erratum}.

For the nonlinear Dirac-Maxwell system, the only result for the semiclassical limit is the work \cite{mauser2007convergence} where the simultaneous non-relativistic and semiclassical limit to the non-relativistic Vlasov-Poisson equation is somewhat simplified to a relativistic perturbation of non-relativistic quantum mechanics (uniform estimates in $\hbar$ were obtained in \cite{bechouche2005asymptotic}, but not in \cite{masmoudi2003nonrelativistic}). \\

The goals of this paper are, on the one hand, to significantly advance from the case of smooth, time-independent potentials with polynomial growth at infinity as in \cite{gerard1997homogenization, spohn2000semiclassical} (using general methods of pseudo-differential calculus à la Hörmander) to low regularity, time-dependent potentials, i.e. non-autonomous Hamiltonians, which we achieve by explicit computation of the remainder term in the Wigner equation. On the other hand, we contribute to a better mathematical understanding of the classical, relativistic description of spin and antimatter, which remain when quantum mechanics have vanished with $\hbar$.

The regularity for the potentials used in this paper is the natural setting for the definition of trajectories for the relativistic Vlasov-Maxwell equation, the derivation of which from a quantum mechanical model such as the Dirac-Maxwell equation remains a hard open problem.  
\\

In the limit $\hbar \rightarrow 0$ , the dynamics of the electron ($+$) and positron ($-$) are governed by the classical relativistic Hamilton functions $H_{\pm}$ given by 
\begin{equation}
    H_{\pm} = \pm \sqrt{1+|\xi-A|^2} -A_0.
    \label{eq:classical hamiltonian}
\end{equation}
At first glance, spin does not appear in \eqref{eq:classical hamiltonian}. The spin – postulated by Goudsmit and Uhlenbeck in 1926 \cite{uhlenbeck1925ersetzung} in order to explain the anomalous Zeeman effect – occurs intrinsically in the Dirac equation (which Dirac derived two years later in 1928 \cite{dirac1928quantum}) and therefore one might be tempted to consider it as a quantum mechanical phenomenon that disappears in the (semi-)classical limit. However, it was discovered by Bargmann, Michel and Telegdi \cite{bargmann1959precession}, based on earlier works by Thomas, Frenkel and Kramers \cite{thomas1926motion, frenkel1926elektrodynamik, kramers1964quantum}, that the equation for the precession of the ``classical intrinsic angular momentum", now known as \emph{BMT equation}, can be derived solely from the principles of special relativity and the postulate of an intrinsic angular momentum as predicted in \cite{bargmann1959precession}, without the need to refer to quantum mechanics and the Dirac equation. Spohn in \cite{spohn2000semiclassical} showed that the classical BMT equation for the classical spin precession can indeed be derived from the Dirac equation and the quantum mechanical spin precession.

In this work we extend and also shed some new light on the semiclassical limit of the Dirac equation as it was treated in \cite{gerard1997homogenization, spohn2000semiclassical}. The framework of \cite{gerard1997homogenization, spohn2000semiclassical} introduces matrix-valued Wigner transforms in order to arrive at a phase space description of \eqref{eq:dirac equation spinor}. The classical motion described by the Liouville equation with the relativistic Hamilton functions \eqref{eq:classical hamiltonian} is then derived by projecting on the eigenspaces of the symbol of the Dirac Hamilton operator in \eqref{eq:dirac equation spinor}, the eigenvalues of which are given by \eqref{eq:classical hamiltonian}, and then passing to the limit. Somewhat differently, we pass to the limit without projecting first, thereby obtaining the \emph{matrix-valued Vlasov (Liouville) equation} for the Wigner matrix measure $W(t,x,\xi)$, 
\begin{align}
    {}&\partial_t W = -i[P,Y] + \frac{1}{2}(\{P,W\}-\{W,P\}), \label{eq:matrix valued liouville introduction}\\
    &[P,W] = 0,\label{eq:commutator constraint}
\end{align}
where $P = P(t,x,\xi)$ is the symbol \eqref{eq: time dependent electromagnetic Dirac symbol} of the Dirac Hamilton operator $H$ and where $[\,\cdot\,,\cdot \,]$ denotes the commutator of two $m\times m$-matrix-valued functions on phase space $\mathbb{R}^d_x \times \mathbb{R}^d_{\xi}$ and $\{\,\cdot\,,\cdot \,\}$ denotes the Poisson bracket on $\mathbb{R}^d_x \times \mathbb{R}^d_{\xi}$.
This equation contains all the information about the electron and positron at once, including the information about the spin. This way we obtain a direct correspondence between \eqref{eq:dirac equation spinor} and the classical dynamics \eqref{eq:matrix valued liouville introduction} of the full Wigner matrix measure $W$.

The matrix-valued Liouville equation \eqref{eq:matrix valued liouville introduction} for $W$ carries an additional term $-i[P,Y]$ compared to the projected Liouville equation obtained in \cite{gerard1997homogenization}. Here, $Y$ can be thought of as the Lagrange multiplier maintaining the constraint \eqref{eq:commutator constraint} that the Wigner matrix measure $W$ commutes with the symbol $P$ of the Dirac Hamiltonian $H$. 
Once projected on the eigenspaces $P$, the Lagrange multiplier disappears and we retrieve the situation described in \cite{gerard1997homogenization, spohn2000semiclassical}. The approach to use a Lagrange multiplier is inspired by an analogy with the incompressible Navier-Stokes equation, where the pressure serves as the Lagrange multiplier maintaining the incompressibility constraint. We explain this analogy in greater detail in Section \ref{section analogy navier stokes}.

We can formulate the following informal version of our main theorems: 
\begin{theorem*}
Let $\varepsilon = \hbar$ be a small parameter. Then the matrix-valued Wigner transform $W^{\varepsilon}$ of a mixed state density matrix $R^{\varepsilon}$ – whose eigenvalues (``occupation probabilities") $\lambda_j^{\varepsilon}$ satisfy $$\sum_{j=1}^{\infty} (\lambda_j^{\varepsilon})^2 \leq C(2\pi\varepsilon)^3,$$ and which solves the von Neumann equation associated with the Dirac equation \eqref{eq:dirac equation spinor} – converges weakly$^{\ast}$ in $L^{\infty}(\mathbb{R}_t,L^2(\mathbb{R}^3_x \times \mathbb{R}^3_{\xi}))$ to the Wigner matrix measure $W$, which is a solution of the matrix-valued Liouville equation \eqref{eq:matrix valued liouville introduction}-\eqref{eq:commutator constraint}. 

The evolution of the electron and positron densities $f_{\pm} := \tr_{\mathbb{C}^4}(W\Pi_{\pm})$, where $\Pi_{\pm}$ are the projections on the electron and positron subspaces of the symbol $P$ of the Dirac Hamilton operator $H$, is governed by the classical relativistic Hamilton functions \eqref{eq:classical hamiltonian}.
\end{theorem*}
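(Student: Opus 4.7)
I would start from the von Neumann equation $i\varepsilon \partial_t R^\varepsilon = [H, R^\varepsilon]$ satisfied by the density operator of the mixed state and work with its matrix-valued Wigner transform $W^\varepsilon(t,x,\xi)$. The Parseval-type identity for the Wigner transform yields $\|W^\varepsilon(t)\|_{L^2_{x,\xi}}^2 = (2\pi\varepsilon)^{-3}\|R^\varepsilon(t)\|_{\mathrm{HS}}^2 = (2\pi\varepsilon)^{-3}\sum_j(\lambda_j^\varepsilon)^2$, which combined with the hypothesis $\sum_j(\lambda_j^\varepsilon)^2 \le C(2\pi\varepsilon)^3$ delivers a uniform bound in $L^\infty(\mathbb{R}_t; L^2(\mathbb{R}^3_x \times \mathbb{R}^3_\xi; \mathbb{C}^{4\times 4}))$. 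Banach--Alaoglu then produces a weak$^\ast$ limit $W$ along a subsequence $\varepsilon_n \to 0$.

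\textbf{Wigner equation with explicit Moyal remainder.} Taking the Wigner transform of the von Neumann equation gives $\partial_t W^\varepsilon = (i\varepsilon)^{-1}(P \star W^\varepsilon - W^\varepsilon \star P)$, where $P$ is the Weyl symbol of $H$ and $\star$ is the Moyal product. Because $P$ is affine in $\xi$, I would expand the matrix-valued Moyal bracket as
\begin{equation*}
\partial_t W^\varepsilon = \tfrac{1}{i\varepsilon}[P, W^\varepsilon] + \tfrac{1}{2}\bigl(\{P, W^\varepsilon\} - \{W^\varepsilon, P\}\bigr) + \varepsilon\,\mathcal{R}^\varepsilon,
\end{equation*}
keeping $\mathcal{R}^\varepsilon$ in the form of a concrete oscillatory integral depending only on $A_0, A$ and $W^\varepsilon$. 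Moving derivatives onto a test function $\varphi$ through integration by parts and exploiting the uniform $L^2$ control on $W^\varepsilon$, I would show $\langle \varepsilon \mathcal{R}^\varepsilon, \varphi\rangle \to 0$ under the low regularity assumed on the potentials, avoiding the $C^\infty$-with-polynomial-growth hypothesis of \cite{gerard1997homogenization, spohn2000semiclassical}. This is the technical core of the argument.

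\textbf{Lagrange multiplier and commutator constraint.} Multiplying the Wigner equation by $\varepsilon$ and letting $\varepsilon \to 0$ forces $[P, W] = 0$, namely the constraint \eqref{eq:commutator constraint}. To extract the Lagrange multiplier I rewrite the equation as
\begin{equation*}
\tfrac{1}{\varepsilon}[P, W^\varepsilon] = i\partial_t W^\varepsilon - \tfrac{i}{2}\bigl(\{P, W^\varepsilon\} - \{W^\varepsilon, P\}\bigr) - i\varepsilon\,\mathcal{R}^\varepsilon,
\end{equation*}
whose right-hand side is bounded in a suitable dual space; a further weak$^\ast$ extraction produces a limit $Z$. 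Since $P$ has the simple eigenvalues $H_\pm$ of \eqref{eq:classical hamiltonian} with smooth orthogonal projections $\Pi_\pm$, the range of $\operatorname{ad}_P$ is characterised pointwise and I identify $Z = [P, Y]$ for a matrix-valued limit $Y$. Passing to the limit in the remaining terms yields exactly \eqref{eq:matrix valued liouville introduction}.

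\textbf{Projection on electron and positron eigenspaces.} To obtain the evolution of $f_\pm = \tr_{\mathbb{C}^4}(W\Pi_\pm)$, I would multiply \eqref{eq:matrix valued liouville introduction} by $\Pi_\pm$ and take the trace over $\mathbb{C}^4$. The Lagrange multiplier term drops out because $\tr(\Pi_\pm[P, Y]) = \tr([\Pi_\pm, P] Y) = 0$ by cyclicity of the trace and $[\Pi_\pm, P]=0$. The constraint $[P, W] = 0$ together with the spectral decomposition $P = H_+\Pi_+ + H_-\Pi_-$ makes $W$ block-diagonal in that decomposition, and a Leibniz computation on the Poisson bracket terms, using the smoothness of $\Pi_\pm$ where $P$ has no spectral crossing, reduces the matrix brackets to the scalar $\{H_\pm, f_\pm\}$. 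This yields the classical relativistic Vlasov--Liouville equations with Hamiltonians \eqref{eq:classical hamiltonian}. The main obstacle throughout is the low-regularity distributional control of $\mathcal{R}^\varepsilon$; a close second is the identification $Z = [P, Y]$, which requires a careful uniform analysis of the range of $\operatorname{ad}_P$ and smoothness of the spectral projections $\Pi_\pm$.
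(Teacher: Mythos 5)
Your proposal tracks the paper's proof closely in its main lines: Plancherel-type identity converting the hypothesis $\sum_j(\lambda_j^\varepsilon)^2\le C(2\pi\varepsilon)^3$ into a uniform $L^\infty_tL^2_{x,\xi}$ bound on $W^\varepsilon$, weak$^\ast$ compactness, explicit computation of the Moyal remainder, convergence of the remainder to zero under low regularity, extraction of the constraint $[P,W]=0$ by multiplying by $\varepsilon$, and construction of the Lagrange multiplier from the limit of the oscillating term $\tfrac{1}{i\varepsilon}[P,W^\varepsilon]$. These are exactly the ingredients used by the paper (Corollary \ref{thm:existence wigner}, Lemma \ref{thm:limit remainder}, Proposition \ref{proposition general}, and the subsequent lemmas), so your strategy is essentially the same.

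Two details in your sketch are slightly off compared to the paper's actual argument, and they would need to be corrected for a rigorous treatment. First, you write the Moyal remainder in the form $\varepsilon\,\mathcal{R}^\varepsilon$, suggesting it carries a uniform $\varepsilon$ factor in front of a bounded operator. That is not the structure that the paper's low-regularity hypotheses allow: the remainder $r^\varepsilon$ in \eqref{eq:remainder of Wigner matrix equation} has one term ($\varepsilon\Delta[A_k][\alpha_k,W^\varepsilon]$) with a manifest $\varepsilon$ prefactor, but the other two terms are differences between the pseudodifferential operator $\theta[\cdot]$ and the first-order derivative $\nabla\cdot\nabla_\xi$, which tend to zero by dominated convergence but do not scale as $O(\varepsilon)$ without more regularity on $A,A_0$ than the theorem assumes. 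To make your plan work, you either need $A\in C^2$ for a quantitative Taylor remainder, or you should argue as the paper does — term by term, showing each piece of $r^\varepsilon$ converges to zero in $\mathcal S'$ or $H^{-1}_{\mathrm{loc}}$ without a rate. Second, the eigenvalues $\lambda_\pm$ of the Dirac symbol $P$ are \emph{not} simple: each has multiplicity two (the corresponding $\Pi_\pm$ are rank-two projections). This does not break your argument about the range of $\operatorname{ad}_P$ (the off-diagonal blocks still characterize it), but you should not invoke simplicity. For the final projection step, your direct ``multiply by $\Pi_\pm$ from one side and trace'' computation is equivalent by cyclicity to the paper's ``compress by $\Pi_\pm$ on both sides, then trace''; it suffices to obtain the scalar Vlasov equation, but it bypasses — and therefore does not exhibit — the Berry and Poissonian-curvature structure of the matrix-valued equation for $W_\pm$, which the paper derives first precisely because that intermediate equation carries the spin information. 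You also gloss over the handling of $\partial_t\Pi_\pm$ inside the trace: the identity $\Pi(D\Pi)\Pi=0$ for a derivation $D$ is what makes $\tr((\partial_t\Pi_\pm)W)=0$, and should be stated explicitly.
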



\subsection{The relativistic Vlasov equation}
The relativistic Vlasov equation with given (i.e. external) time-dependent electromagnetic field for the phase space density $f_{\pm} (t,x,\xi) \colon \mathbb{R}_t \times \mathbb{R}^3_x \times \mathbb{R}^3_v \rightarrow \mathbb{R_+}$ including both electrons ($f_+$) and positrons ($f_-$) is given by 
\begin{align}
\label{eq:definition Vlasov Maxwell system original form}
\partial_t f_{\pm} + \frac{v}{\left< v \right>} \cdot \nabla_x 
f_{\pm}+ \left( E \pm \frac{v}{\left< v \right>} \times B \right) \cdot \nabla_v f_{\pm} &= 0 , \\
f_{\pm}|_{t=0}&=f_{\pm}^{\mathrm{in}}.
\end{align}
For the case of time dependent $A$, the electromagnetic field $E,B\in  \mathbb{R}^3$, is given by \begin{align}
    E =\nabla A_0 - \partial_t A, && B = \nabla \times A.
\end{align} 
with the magnetic vector potential $A\in \mathbb{R}^3$ and the electric scalar potential $A_0\in \mathbb{R}$ and the Lorentz force ($+$ for electrons, $-$ for positrons)
\begin{equation} \label{F_Lorentz}
    F_{\pm}^{\mathrm{L}} = E \pm \frac{v}{\left< v \right>} \times B .
\end{equation} 
Note that the \emph{relativistic velocity} $\frac{v}{\left< v\right>}$ is bounded by speed of light $c=1$ (in our semiclassical scaling) with the gamma factor $\frac{1}{\langle v \rangle}$ of special relativity, using the notation 
\begin{equation}
    \langle v \rangle = \sqrt{1 +|v|^2}.
\end{equation} 
Equation \eqref{eq:definition Vlasov Maxwell system original form} arises from the Liouville equation
\begin{equation}
    \partial_t f_{\pm}(t,x,\xi) = \{ H_{\pm}, f_{\pm} \}(t,x,\xi),
\end{equation}
with the relativistic Hamilton functions
\begin{equation}
    H_{\pm}(t,x,\xi) = \pm \langle\xi-A(t,x) \rangle - A_0(t,x).
\end{equation}
and the change of variables $v = \xi-A(t,x)$.

\subsection{The Dirac-von Neumann equation}
We use a scaling in \eqref{eq:dirac equation spinor} where $c=q=1$ and $\hbar = \varepsilon$. 
On the Hilbert space $\mathfrak{H} = L^2(\mathbb{R}^3, \mathbb{C}^4)$ we define the \emph{Dirac Hamilton operator}
\begin{equation}
    H = \alpha \cdot (-i\varepsilon \nabla -A(t,x)) + \beta -A_0(t,x)I_4.
    \label{eq: dirac hamiltonian}
\end{equation}
where $I_4$ denotes the identity in $\mathbb{C}^4$. 
We describe the mixed state of the relativistic quantum particle by its \emph{density operator}
$R=R^{\ast} \geq 0$  on $\mathfrak{H}$, cf. Section \ref{sec:notations}. 
We use the following version of the \emph{Dirac equation}, which is the \emph{von Neumann equation} given by the commutator of the Dirac Hamilton operator \eqref{eq: dirac hamiltonian} and the density operator:
\begin{align}
\label{eq:Dirac equation}
i \varepsilon \partial_t R
&= \left[ H   , R \right] , \quad R|_{t=0} = R_{\mathrm{in}}.
\end{align}
Note that for notational simplicity we omit the $\varepsilon$-dependence of $R$.
The Dirac Hamilton operator $H$ is the Weyl operator corresponding to the matrix-valued symbol
\begin{align}
    P(t,x,\xi) &= \alpha \cdot (\xi -A(t,x)) + \beta - A_0(t,x)I_4.
    \label{eq: time dependent electromagnetic Dirac symbol}
\end{align}
The four $4\times 4$ Dirac matrices\footnote{also called \emph{Gamma matrices} $\{\gamma^{\mu}\}_{\mu=0}^3$ in a different choice of basis for the Dirac algebra $\mathrm{Cl}_{1,3}(\mathbb{C})$, i.e. the complex Clifford algebra with signature $(1,3)$} $\beta, \alpha_k$, $k=1,2,3$ are given by
\begin{align}
\label{eq:dirac matrices}
\beta &= 
\begin{pmatrix}
I_{2} & 0 \\
0  & - I_{2}
\end{pmatrix}
\quad \text{and} \quad 
\alpha_k = 
\begin{pmatrix}
0 & \sigma_k  \\
\sigma_k & 0
\end{pmatrix}, \quad k=1,2,3
\end{align}
where $I_{2}$ denotes the identity in $\mathbb{C}^2$ and the $2 \times 2$ \emph{Pauli matrices} $\sigma_k$ representing the spin are given by 
\begin{align}
\sigma_1 &= 
\begin{pmatrix}
0 & 1 \\
1 & 0
\end{pmatrix} ,
\quad 
\sigma_2 = 
\begin{pmatrix}
0 & -i  \\
i & 0  
\end{pmatrix} 
\quad \text{and} \quad 
\sigma_3 = 
\begin{pmatrix}
1 & 0 \\
0 & -1 
\end{pmatrix} .
\end{align}
The dot product of a vector of matrices $\alpha = (\alpha_1,\alpha_2,\alpha_3)^\top$ with a vector $a$ is defined as
\begin{equation}
    \alpha\cdot a = \sum_{k=1}^3 \alpha_k a_k, \quad a\in \mathbb{R}^3.
\end{equation}
Note that $\alpha, \beta$ satisfy
\begin{align}
    \alpha_j\alpha_k + \alpha_k\alpha_j = 2\delta_{jk}I_4, && \alpha_j \beta + \beta \alpha_j = 0, && \beta^2 = I_4.
    \label{eq:clifford algebra}
\end{align}
We also introduce the auxilliary matrix
\begin{equation}
    \gamma^5 := -i \alpha_1\alpha_2\alpha_3.
\end{equation}
We collect some useful identities for the Dirac matrices in Section \ref{sec: identities gamma}. 

Let $R(t,x,y)$
be the integral kernel associated to the density operator $R$ in \eqref{eq:Dirac equation}.
The \emph{particle density} $\rho^{\varepsilon}$ and \emph{current density} $J^{\varepsilon}$ associated with $R$ are given by 
\begin{equation}
\label{eq:densities in terms of kernel}
    \rho^{\varepsilon}(t,x) = \tr_{\mathbb{C}^4}( R(t,x,x))
    \quad J^{\varepsilon}(t,x) = \tr_{\mathbb{C}^4}( \alpha R(t,x,x)).
\end{equation}
These expressions are well-defined elements of $L^1(\mathbb{R}^3_x)$ since $R$ is trace-class, cf. Section \ref{sec:notations}.
According to Section \ref{sec:notations}, the integral kernel $R(t,x,y)$ of the operator $R$ on $L^2$ has a spectral decomposition with eigenvalues $\lambda_j^{\varepsilon}$  that satisfy
\begin{equation}
    \lambda_j^{\varepsilon} \geq 0, \quad \sum_{j=1}^{\infty} \lambda_j^{\varepsilon} = 1,
\end{equation}
and a Hilbert basis $\{\psi_j^{\varepsilon}\}_{j\in \mathbb{N}}$ of $\mathfrak{H}$ such that
\begin{equation}
    R(t,x,y) = \sum_{j=1}^{\infty} \lambda_j^{\varepsilon} \psi_j^{\varepsilon}(t,x) \overline{\psi_j^{\varepsilon}(t,y)}^\top.
    \label{eq:R in eigenbasis}
\end{equation} 

The eigenvalues $\lambda_j^{\varepsilon}$ are the ``occupation probabilities" for each of the infinitely many possible states $\psi_j^{\varepsilon}$ of the "mixed state" represented by $R$. 

Note that for semiclassical limits of Wigner transforms in $L^2$ it is crucial to consider occupation probabilities $\lambda_j^{\varepsilon}$ that depend on the scaled Planck constant  $\varepsilon$, as already noted for the limit from the Schrödinger-Poisson to the Vlasov-Poisson equation \cite{lions1993mesures, markowich1993classical}.

Using \eqref{eq:R in eigenbasis} the {particle density} $\rho^{\varepsilon}$ and the {current density} $J^{\varepsilon}$ can be expressed as
\begin{equation}
    \rho^{\varepsilon}(t,x) = \sum_{j=1}^{\infty} \lambda_j^{\varepsilon} \langle \psi_j^{\varepsilon}(t,x), \psi_j^{\varepsilon}(t,x) \rangle_{\mathbb{C}^4}, \quad J^{\varepsilon}(t,x) = \sum_{j=1}^{\infty} \lambda_j^{\varepsilon} \langle \psi_j^{\varepsilon}(t,x), \alpha_k \psi_j^{\varepsilon}(t,x) \rangle_{\mathbb{C}^4} .\label{eq:charge current densities}
\end{equation}
For completeness we also give the definition of energy density $e^{\varepsilon}(t,x)$
\begin{equation}
\label{eq:energy density}
    e^{\varepsilon}(t,x) =\sum_{j=1}^{\infty} \lambda_j^{\varepsilon} \langle \psi_j^{\varepsilon}(t,x), H \psi_j^{\varepsilon}(t,x)\rangle_{\mathbb{C}^4},
\end{equation}
where $H$ is the Dirac Hamilton operator \eqref{eq: dirac hamiltonian}.

Denote
\begin{equation}
    P_0(t,x,\xi) = \alpha \cdot (\xi-A(t,x)) + \beta.
\end{equation}
Owing to \eqref{eq:clifford algebra} we observe that 
\begin{equation}
    P_0(t,x,\xi)^2 = (1+|\xi-A(t,x)|^2)I_4=\langle \xi-A(t,x)\rangle^2 I_4.
\end{equation}
Thus the minimal polynomial $m_P[\lambda]$ of the symbol $P(t,x,\xi) = P_0(t,x,\xi) -A_0(t,x)I_4$ is given by
\begin{equation*}
    m_P[\lambda](t,x,\xi) = (\lambda - \langle \xi-A(t,x)\rangle+A_0(t,x) )(\lambda+ \langle \xi-A(t,x)\rangle+A_0(t,x)).
\end{equation*}
Then the symbol $P$ has the eigenvalues
\begin{equation}
    \lambda_{\pm}(t,x,\xi) =  \pm \langle \xi-A(t,x)\rangle -A_0(t,x).
    \label{eq:eigenvalues time-dependent electromagnetic Dirac}
\end{equation}
Note that the eigenvalues $\lambda_{\pm}$ correspond to the classical relativistic Hamilton functions \eqref{eq:classical hamiltonian}.

Define
\begin{equation}
    S(t,x,\xi):=\frac{P_0(t,x,\xi)}{\langle\xi-A(t,x)\rangle} =S(t,x,\xi)^*, \quad S(t,x,\xi)^2 =  I_4.
\end{equation}
Then $S$ has eigenvalues $\pm 1$ with projections $\Pi_{\pm}$
\begin{align}
\label{eq:definition projections}
    \Pi_{\pm}(t,x,\xi) &= \frac{1}{2}\left(I_{4} \pm S(t,x,\xi)\right) = \frac{1}{2}\left(I_{4} \pm \frac{P_0(t,x,\xi)}{\langle\xi-A(t,x)\rangle}\right).
\end{align}
Therefore $P$ is given by
\begin{equation}
    P(t,x,\xi) =\lambda_+(t,x,\xi)\Pi_+(t,x,\xi) +\lambda_-(t,x,\xi)\Pi_-(t,x,\xi).
\end{equation}
In this paper, we consider the case where $A(t,x)$ and $A_0(t,x)$ are external, given potentials.

\subsection{The Dirac-Wigner equation}
Given $f,g \in {L^2}(\mathbb{R}^3,\mathbb{C}^4)$, we define the \emph{matrix-valued Wigner transform} or \emph{Wigner matrix} \cite{gerard1997homogenization} as
\begin{equation}
\label{eq:wigner transform general}
    W^{\varepsilon}[f,g](x,\xi) := \frac{1}{(2\pi)^3} \int_{\mathbb{R}^3} f(x + \frac{\varepsilon y}{2})\overline{g(x - \frac{\varepsilon y}{2})}^\top e^{-i\xi \cdot y} dy \in \mathcal{S}'(\mathbb{R}^3_x \times \mathbb{R}^3_{\xi},\mathbb{M}_4(\mathbb{C})).
\end{equation}
Given a density operator $R$ on $\mathfrak{H}=L^2(\mathbb{R}^3, \mathbb{C}^4)$ with integral kernel
\begin{equation}
    R(t,x,y) \in L^2(\mathbb{R}^3_x \times \mathbb{R}^3_y,\mathbb{M}_4(\mathbb{C})),
\end{equation}
we define the Wigner transform $W^{\varepsilon}[R]$ of $R$ as
\begin{align}
\label{eq:definition Wigner transform with spin}
W^{\varepsilon}[R](t,x,\xi) 
&=   \frac{1}{(2 \pi )^3}
\int_{\mathbb{R}^3}
R (t, x + \frac{\varepsilon y}{2} , x - \frac{\varepsilon y}{2} ) e^{- i \xi \cdot y}  dy  .
\end{align}
The charge and current densities \eqref{eq:charge current densities} can be expressed as the zeroth order moments in $\xi$ of the Wigner matrix $W^{\varepsilon}$
\begin{align}
    \rho^{\varepsilon}(t,x) = \int_{\mathbb{R}^3_{\xi}} \tr_{\mathbb{C}^4}(W^{\varepsilon}(t,x,\xi))d \xi, &&
    J^{\varepsilon}_k(t,x) = \int_{\mathbb{R}^3_{\xi}} \tr_{\mathbb{C}^4}(\alpha_k W^{\varepsilon}(t,x,\xi))d \xi.
\end{align}
The Wigner matrix $W^{\varepsilon}[R]$ then solves the \emph{Dirac-Wigner equation}
\begin{align}
\label{eq:Wigner matrix equation_2}
\partial_t W^{\varepsilon} &= \frac{1}{i\varepsilon}[P,W^{\varepsilon}]  +\frac{1}{2} \left( \left\{ P, W^{\varepsilon}\right\}-\left\{ W^{\varepsilon},P\right\} \right)  +  r^{\varepsilon} , \quad W^{\varepsilon}|_{t=0} = W^{\varepsilon}_{\mathrm{in}} = W^{\varepsilon}[R_{\mathrm{in}}],\\
P &= \alpha \cdot (\xi-A) + \beta-A_0,
\end{align}
where 
the \emph{remainder} $r^{\varepsilon}$ is given by
\begin{align}
\label{eq:remainder of Wigner matrix equation}
r^{\varepsilon} 
&= \varepsilon \Delta[A_k][\alpha_k,W^{\varepsilon}] +\frac{1}{2}(-\nabla_x A_k\cdot \nabla_{\xi}-\theta[A_k])[\alpha_k, W^{\varepsilon}]_+ 
 -(-\nabla A_0 \cdot \nabla_{\xi} -\theta [A_0])W^{\varepsilon} ,
\end{align}
where $[\,\cdot\,,\cdot\,]_+$ denotes the anticommutator, cf. Section \ref{sec:notations}. Here, $\theta$ is the pseudo-differential operator defined by
\begin{equation}
    (\theta[g]f)(x,\xi) := \int_{\mathbb{R}^6} \frac{1}{i\varepsilon}(g(x+\frac{\varepsilon y}{2})-g(x-\frac{\varepsilon y}{2}))f(x,\eta) e^{-i (\xi-\eta)\cdot y} d \eta d y, 
    \label{eq:PDO}
\end{equation}
while $\Delta$ is the pseudo-differential operator defined by
\begin{equation}
\label{eq:PDO_2}
    (\Delta[g]f)(x,\xi) := \frac{1}{2i}\int_{\mathbb{R}^6} \frac{g(x+\frac{\varepsilon y}{2})-2g(x)+g(x-\frac{\varepsilon y}{2})}{\varepsilon^2}f(x,\eta) e^{-i(\xi-\eta)\cdot y} d\eta d y.
\end{equation}
For the derivation of the Dirac-Wigner equation \eqref{eq:Wigner matrix equation_2} from the von Neumann equation \eqref{eq:Dirac equation} associated to the Dirac Hamilton operator \eqref{eq: dirac hamiltonian} we refer to Section \ref{subsection:Derivation of the Dirac-Wigner equation}.

\subsection{Analogy with Navier-Stokes}
\label{section analogy navier stokes}
The matrix-valued distribution $Y$ in \eqref{eq:matrix valued liouville introduction} should be thought of as the Lagrange multiplier associated with the constraint $[P,W]=0$. We observe an analogy with the
pressure $p\equiv p(t,x)\in\mathbb R$ in the Navier-Stokes equation for incompressible fluids
\begin{align*}
\partial_tu+u\cdot\nabla_xu+\nabla_xp&=\Delta_xu,\qquad x\in\mathbb R^3\\\nabla_x\cdot u &=0,
\end{align*}
where $u\equiv u(t,x)\in\mathbb R^3$ is the velocity field in the fluid. The pressure $p$ can be viewed as
the Lagrange multiplier needed to maintain the constraint $\nabla_x\cdot u=0$. In other words,
\begin{equation*}
Y\text{ is analogous to }p,\text{ and }-i[P,\cdot]\text{ is analogous to }\nabla_x.
\end{equation*}
Moreover, the pressure $p$ can be expressed solely in terms of $u$ and its derivatives. Indeed, taking the divergence of both sides of the motion equation,
\begin{equation}
-\Delta_xp=\nabla_x^2:(u\otimes u).
\label{eq:expression p in terms of u}
\end{equation}
There is an analogous formula for $Y$, 
  \begin{equation}
        Y = \frac{i}{4\langle \xi-A \rangle^2} ([\partial_t P,W] - \frac{1}{2}[P,\{P,W\}-\{W,P\}]),
        \label{eq:Y formula analogy}
    \end{equation}
    cf. equation \eqref{eq:Y formula} in Theorem \ref{thm:main1}.
    In other words, $Y$ is expressed in terms of $W$ analogously to how $p$  is expressed in terms of $u$. The motion equation for Navier-Stokes can be recast as
\begin{equation*}
\partial_tu+u\cdot\nabla_xu+\nabla_x(-\Delta)^{-1}\nabla_x\cdot(u\otimes u)=\Delta_xu,
\end{equation*}
whereas the matrix Liouville equation can be recast as
\begin{equation*}
    \partial_t W = \frac{1}{4\langle \xi-A \rangle^2} \big[P,[\partial_t P,W] - \frac{1}{2}[P,\{P,W\}-\{W,P\}]\big] + \frac{1}{2}\left(\{P,W\}-\{W,P\}\right)
\end{equation*}
Now if $u$ is a Leray solution of the Navier-Stokes equation in $\mathbb R^3$, one has
\begin{equation*}
u\in L^\infty(\mathbb R_+,L^2(\mathbb R^3,\mathbb R^3)),\quad\nabla_xu\in L^2(\mathbb R_+\times\mathbb R^3,\mathbb{M}_3(\mathbb C)),
\end{equation*}
so that, by Sobolev's embedding, $
u\in L^2(\mathbb R_+,L^6(\mathbb R^3))$. Therefore 
\begin{equation*}
u\otimes u\in L^2(\mathbb R_+,L^{3/2}(\mathbb R^3,\mathbb{M}_3(\mathbb C))),
\end{equation*}
so that by \eqref{eq:expression p in terms of u},
\begin{equation*}
    p\in L^2(\mathbb R_+;L^{3/2}(\mathbb R^3,\mathbb C)).
\end{equation*}
Hence
\begin{equation*}
\partial_tu\in L^2(\mathbb R_+;W^{-1,3/2}(\mathbb R^3,\mathbb R^3))+L^2(\mathbb R_+;H^{-1}(\mathbb R^3,\mathbb R^3)),
\end{equation*}
so that, for each $\chi\equiv \chi(x)\in C^\infty_c(\mathbb R^3)$,
\begin{equation*}
(t,x)\mapsto\chi(x)u(t,x)\in C(\mathbb R_+;W^{-1,3/2}(\mathbb R^3,\mathbb R^3)).
\end{equation*}
In particular, the trace $u_0 := u(0,\cdot)$ is a well-defined element of $W^{-1,3/2}_{{\mathrm{loc}}}(\mathbb R^3,\mathbb R^3)$. The fact that the initial data $u_0$ can be defined by utilizing \eqref{eq:expression p in terms of u} motivates a similar reasoning for the Wigner measure $W$. So far, the convergence of the Wigner transform $W^{\varepsilon}$ to the Wigner measure $W$ is stated in the sense of distributions in $t,x,\xi$ (cf. Proposition \ref{proposition general}) and the presence of the Lagrange multiplier $Y$ in the matrix-valued Liouville equation inhibits the prescription of initial data at $t=0$. But formula \eqref{eq:Y formula analogy} gives suitable estimates for $Y$ in terms of $W$ (Lemma \ref{thm:characterization Y}) such that the prescription of initial data $W_{\mathrm{in}}$ for \eqref{eq:matrix valued liouville introduction} becomes possible (Lemma \ref{thm:initial data}). The reader interested in a more detailed discussion of the Navier-Stokes equation used in this analogy is referred to section 1.8 in chapter 1 of \cite{MajdaBerto}.

\section{Main results}
We have the following main results. The first theorem establishes the semi-classical limit of a solution to the Dirac-Wigner equation \eqref{eq:Wigner matrix equation_2}-\eqref{eq:remainder of Wigner matrix equation}, i.e. the Wigner transform of the Dirac-von Neumann equation  \eqref{eq:Dirac equation}, to the Wigner matrix measure $W$, satisfying the matrix-valued Liouville equation \eqref{eq:matrix valued liouville introduction}.
\begin{theorem}\label{thm:main1}
Let $A\in C^1(\mathbb{R}_t,H^s(\mathbb{R}^3_x))$, where $s >5/2$ is a real number, and let $A_0 \in C^1(\mathbb{R}_t,H^1(\mathbb{R}^3))$.  Let  $ R_{\mathrm{in}} \in \textfrak{S}^{1}(\mathfrak{H})$ be a density operator such that
\begin{equation}
\label{eq:assumption lambda}
\norm{R_{\mathrm{in}}}_{\textfrak{S}^{2}(\mathfrak{H})}^2 = \sum_{j=1}^{\infty} (\lambda_j^{\varepsilon})^2 \leq C(2\pi\varepsilon)^{{3}},
\end{equation}
\begin{equation}
\tr_{\mathfrak{H}}\left( \sqrt{1 - \Delta} R_{\mathrm{in}} \sqrt{1 - \Delta} \right) < + \infty.
\end{equation}
Let $R$ be the unique solution to the Dirac-von Neumann equation \eqref{eq:Dirac equation} with initial datum $R_{\mathrm{in}}$ and let $W^{\varepsilon}= W^{\varepsilon}[R]$ be its Wigner transform. Assume that 
\begin{equation}
W_{\pm}^{\mathrm{in},\varepsilon} := \Pi_{\pm} W^{\mathrm{in},\varepsilon} \Pi_{\pm} \underset{\varepsilon \rightarrow 0}{\longrightarrow} \Pi_{\pm} W^{\mathrm{in}} \Pi_{\pm} =: W^{\mathrm{in}}_{\pm} \quad \text{ weakly in } L^2(\mathbb{R}^3_x \times \mathbb{R}^3_{\xi},\mathbb{M}_4(\mathbb{C})),
\label{eq:convergence initial data}
\end{equation}
where $\Pi_{\pm}$, defined in \eqref{eq:definition projections}, are the projections onto the electron and positron subspaces of $P$.
Then for any $T>0$, $W^{\varepsilon}$ converges weakly$^{\ast}$ in $L^{\infty}(\mathbb{R}_t, L^2(\mathbb{R}^6_{x,\xi},\mathbb{M}_4(\mathbb{C})))$ to a non-negative matrix-valued Radon measure $W \in L^{\infty}(\mathbb{R}_t,L^2(\mathbb{R}^6_{x,\xi},\mathbb{M}_4(\mathbb{C})))$, called the Wigner matrix measure.  Moreover, there exists 
\begin{equation}
    Y \in H^{-1}_{\mathrm{loc}}(\mathbb{R}_t\times\mathbb{R}^3_x\times\mathbb{R}^3_{\xi},\mathbb{M}_4(\mathbb C)),
\end{equation}
such that $W$ solves the matrix-valued Liouville  equation 
\begin{align}
\label{eq:main theorem transport equation}
        \partial_t W &= -i[P,Y] + \frac{1}{2}(\{P,W\}-\{W,P\}), \quad P = \alpha \cdot (\xi-A) + \beta -A_0,
        \\
        [P,W] &= 0, \\
        W|_{t=0} &= W^{\mathrm{in}}_+ + W^{\mathrm{in}}_-,
    \end{align}
    in $H^{-1}_{\mathrm{loc}}(\mathbb{R}_t \times \mathbb{R}^3_x \times \mathbb{R}^3_{\xi})$. The matrix-valued distribution $Y$ satisfies $Y=Y^*$, $\Pi_{\pm} Y \Pi_{\pm} = 0$ and is given by
    \begin{equation}
        Y = \frac{i}{4\langle \xi-A \rangle^2} \left([\partial_t P,W] - \frac{1}{2}[P,\{P,W\}-\{W,P\}]\right).
        \label{eq:Y formula}
    \end{equation}
    Finally, we have that
    \begin{equation}
        \iint \tr(W(t,x,\xi)) dx d\xi = 1 \quad \text{ for almost every } t,
    \end{equation}
    and
    \begin{align}
        \rho^{\varepsilon}(t,\cdot) = \int \tr(W^{\varepsilon}(t, \cdot , \xi)) d \xi &\underset{\varepsilon \rightarrow 0}{\longrightarrow}  \int \tr(W(t, \cdot , \xi)) d \xi =: \rho(t,\cdot), \\
        J_k^{\varepsilon}(t,\cdot) = \int \tr(\alpha_k W^{\varepsilon}(t, \cdot , \xi)) d \xi &\underset{\varepsilon \rightarrow 0}{\longrightarrow}  \int \tr(\alpha_k W(t, \cdot , \xi)) d =: J_k(t,\cdot),
    \end{align}
    narrowly on $\mathbb{R}^3_x$ for almost every $t$.
    \end{theorem}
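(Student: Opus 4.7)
The plan is to follow the Lagrange-multiplier strategy sketched in Section \ref{section analogy navier stokes}: extract a weak-$\ast$ limit $W$ of the matrix-valued Wigner transforms from a uniform $L^2$ bound, pass to the distributional limit in the Dirac-Wigner equation \eqref{eq:Wigner matrix equation_2}, and identify the Lagrange multiplier $Y$ by inverting $[P,\cdot]$ on its range. The crucial input is the scaling hypothesis $\|R_{\mathrm{in}}\|_{\mathfrak{S}^2}^2 \leq C(2\pi\varepsilon)^3$: the Dirac-von Neumann flow preserves the Hilbert-Schmidt norm, and the Plancherel-type identity $\|W^{\varepsilon}(t)\|_{L^2(\mathbb{R}^6,\mathbb{M}_4(\mathbb{C}))}^2 = (2\pi\varepsilon)^{-3}\|R(t)\|_{\mathfrak{S}^2}^2$ then yields $\|W^{\varepsilon}\|_{L^{\infty}_tL^2_{x,\xi}} \leq C$. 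Banach-Alaoglu provides, along a subsequence, a weak-$\ast$ limit $W \in L^{\infty}_tL^2_{x,\xi}$. Non-negativity of $W$ as a matrix-valued measure I would obtain by passing through the Husimi transform (a Gaussian mollification of $W^{\varepsilon}$ that is pointwise positive-semidefinite and has the same weak-$\ast$ limit).

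Multiplying \eqref{eq:Wigner matrix equation_2} by $i\varepsilon$ and passing to the distributional limit, every term on the right-hand side except $[P, W^{\varepsilon}]$ acquires an explicit factor of $\varepsilon$, so the hypotheses $A\in C^1_tH^s_x$, $s>5/2$, and $A_0\in C^1_tH^1_x$ suffice to conclude $[P, W] = 0$. Next, rewrite \eqref{eq:Wigner matrix equation_2} as
\begin{equation*}
\tfrac{1}{i\varepsilon}[P, W^{\varepsilon}] \;=\; \partial_t W^{\varepsilon} - \tfrac{1}{2}\bigl(\{P,W^{\varepsilon}\} - \{W^{\varepsilon},P\}\bigr) - r^{\varepsilon}.
\end{equation*}
The right-hand side converges in $\mathcal{D}'$ to $G := \partial_t W - \tfrac{1}{2}(\{P,W\} - \{W,P\})$, provided $r^{\varepsilon} \to 0$ distributionally, so $\tfrac{1}{i\varepsilon}[P, W^{\varepsilon}] \to G$ as well.

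To identify $Y$, observe that $P\Pi_{\pm} = \lambda_{\pm}\Pi_{\pm} = \Pi_{\pm}P$ implies $\Pi_{\pm}[P,M]\Pi_{\pm}=0$ for every matrix $M$; the diagonal blocks $\Pi_{\pm}G\Pi_{\pm}$ therefore vanish and $G$ lies in the range of $[P,\cdot]$. Since $[P,\cdot]$ acts on off-diagonal blocks as multiplication by $\pm(\lambda_+ - \lambda_-) = \pm 2\langle\xi-A\rangle$, which is bounded away from zero, the equation $G = -i[P,Y]$ has a unique off-diagonal solution $Y$, i.e.\ $\Pi_{\pm}Y\Pi_{\pm}=0$. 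Applying $[P,\cdot]$ to the limiting equation and using $[P,\partial_tW] = -[\partial_tP,W]$ (from $[P,W]=0$) together with the identity $[P,[P,Y]] = 4\langle\xi-A\rangle^2 Y$ valid on off-diagonal $Y$, the inversion is algebraic and yields \eqref{eq:Y formula}. Hermiticity $Y = Y^{\ast}$ is then immediate from this formula together with $W = W^{\ast}$, and the normalization $\iint\tr W\,dx\,d\xi = 1$ inherits from $\tr R = 1$ via the trace formula for the particle density.

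The main obstacle is the rigorous distributional vanishing of $r^{\varepsilon}$ in the low-regularity regime $A\in H^s_x$, $s>5/2$: standard pseudo-differential calculus \`a la H\"ormander does not apply uniformly in $\varepsilon$ at this regularity. The plan is to exploit the \emph{explicit} forms \eqref{eq:PDO}--\eqref{eq:PDO_2} of $\theta[A_k]$ and $\Delta[A_k]$: Taylor expansion of the symmetric difference quotients in $y$, followed by integration by parts against Schwartz test functions in $(x,\xi)$, shows that the leading-order piece of $\theta[g]$ is exactly the symplectic gradient operator whose sign is chosen to cancel the corresponding explicit Poisson-bracket contribution in $r^{\varepsilon}$, while $\varepsilon\Delta[g]$ remains $O(\varepsilon)$; both estimates only need a few weak derivatives of $g$, well within the range of Sobolev embedding for $H^s_x$ with $s>5/2$. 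Finally, narrow convergence of $\rho^{\varepsilon}$ and $J_k^{\varepsilon}$ follows from the bound on $\int\langle\xi\rangle^2\tr W^{\varepsilon}\,dx\,d\xi$ afforded by $\tr(\sqrt{1-\Delta}\,R_{\mathrm{in}}\sqrt{1-\Delta}) < \infty$, which gives the $\xi$-tightness required to upgrade weak-$\ast$ convergence of $W^{\varepsilon}$ to narrow convergence of its $\xi$-moments.
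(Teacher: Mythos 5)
Your outline follows the paper's architecture closely — uniform $L^2$ bound via the Plancherel identity, weak-$\ast$ extraction, $[P,W]=0$ from the $i\varepsilon$-scaling, off-diagonal inversion of $[P,\cdot]$ through the identity $[P,[P,Y]] = 4\langle\xi-A\rangle^2\,Y$, and the explicit Taylor expansion of $\theta[A_k]$, $\Delta[A_k]$ to kill $r^\varepsilon$ — and most of these steps are sound. Your direct observation that $\Pi_\pm[P,M]\Pi_\pm = 0$ for \emph{every} matrix $M$ (since $\Pi_\pm$ commutes with $P$) to conclude $\Pi_\pm X\Pi_\pm = 0$ is cleaner than the paper's route through $\tr(\Phi X)=0$ for commuting test matrices, and is correct provided you note that multiplication by $\Pi_\pm\in C^1_t H^s_{\mathrm{loc}}$, $s>5/2$ (hence $C^{1,\alpha}$ by Sobolev embedding), is continuous on $H^{-1}_{\mathrm{loc}}$.

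However, there are two genuine gaps. First, you never address the Cauchy data $W|_{t=0} = W^{\mathrm{in}}_+ + W^{\mathrm{in}}_-$. A priori $Y\in H^{-1}_{\mathrm{loc}}(\mathbb{R}_t\times\mathbb{R}^3_x\times\mathbb{R}^3_\xi)$ only, and the term $-i[P,Y]$ in the limit equation does not permit evaluating $W$ at a fixed time. This is precisely the point of the Navier-Stokes analogy the paper develops: the formula \eqref{eq:Y formula} expressing $Y$ in terms of $W$ alone is used to upgrade to $\chi Y\in L^\infty_t H^{-1}_{x,\xi}$, which in turn gives $\partial_t(\chi W)\in L^\infty_t H^{-1}_{x,\xi}$ and hence $\chi W\in C_t H^{-1}_{x,\xi}$, so the trace at $t=0$ makes sense. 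The identification $W|_{t=0} = W^{\mathrm{in}}_+ + W^{\mathrm{in}}_-$ then needs a separate argument: one compresses the $\varepsilon$-equation by $\Pi_\pm$, exploits $\Pi_\pm[P,W^\varepsilon]\Pi_\pm = 0$ so the singular term drops, obtains a uniform-in-$\varepsilon$ equicontinuity bound on $\Pi_\pm W^\varepsilon\Pi_\pm$ in $H^{-1}_{\mathrm{loc},x,\xi}$, and applies Ascoli. None of this appears in your proposal, yet it is the part of the theorem for which the Lagrange-multiplier viewpoint actually earns its keep.

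Second, your argument for the normalization $\iint\tr W = 1$ and narrow convergence of $\rho^\varepsilon$, $J^\varepsilon$ is suspect. You claim a uniform bound on $\iint\langle\xi\rangle^2\tr W^\varepsilon\,dx\,d\xi$ coming from $\tr(\sqrt{1-\Delta}\,R_{\mathrm{in}}\sqrt{1-\Delta}) < \infty$. But this is a bound at $t=0$: propagating it requires control of $[\sqrt{1-\Delta},H]$, and with $A_0$ only in $C^1_t H^1_x$ the commutator $[\sqrt{1-\Delta},A_0]$ need not be bounded, so the $H^1$-type energy is not obviously preserved. Moreover, a $\xi$-moment bound alone does not give tightness of $\rho^\varepsilon(t,\cdot)$ in $x$, which is also needed for narrow convergence. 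The paper instead takes the $\mathbb{C}^4$-trace of the limit Liouville equation, obtains a scalar continuity equation $\partial_t\tr W + \nabla_{x,\xi}\cdot J = 0$ with $J\in L^1$, deduces $\iint\tr W(t) = 1$ for a.e.\ $t$ by a cutoff argument, and then derives absence of mass defect — hence both $\xi$-tightness and narrow convergence — by comparing $\iint\tr W$ with $\int\tr\nu$ through the Wigner-measure inequality of Theorem \ref{thm:wigner measures}. You should replace your $H^1$-propagation step with this conservation argument, or else justify the propagation of the energy bound under the stated (weak) regularity of $A_0$.
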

    Note that here we take the limit in the ``full" Wigner equation for $W^{\varepsilon}$ instead of the projected Wigner transform $\Pi_{\pm} W^{\varepsilon} \Pi_{\pm}$ as in \cite{gerard1997homogenization, spohn2000semiclassical}. In fact it seems natural to refer to the projections $\Pi_{\pm}$ as little as possible since they are nonlinear functions of $x,\xi$ and thus their Weyl quantization is very complicated to calculate.
    
    We retain the term $-i[P,Y]$ (i.e. the Lagrange multiplier associated with $[P,W]=0$) as the limit of the oscillating term $\tfrac{1}{i\varepsilon}[P,W^{\varepsilon}]$, which arises as the $O(\tfrac{1}{\varepsilon})$ term in the expansion of the Moyal bracket. It is non-zero in the matrix-valued case since the matrix-valued symbol $P$ and the matrix-valued Wigner transform $W^{\varepsilon}$ do not commute in general. 
Due to Theorem \ref{thm:main1}, applying $\Pi_{\pm}$ to the matrix-valued Liouville equation \eqref{eq:main theorem transport equation} from both sides yields
\begin{align}
     \Pi_{\pm} \partial_t W \Pi_{\pm} &= \frac{1}{2} \Pi_{\pm}\left(\left\{ P,W\right\} -\left\{W, P\right\} \right)\Pi_{\pm}, \label{eq:Wigner matrix equation limit projected} \\
     [\Pi_{\pm},W] &=0.
\end{align}
It remains to extract the equations for the electron and positron Wigner matrix measures $W_{\pm}= \Pi_{\pm} W{\Pi_{\pm}}$ and the scalar electron and positron Wigner measures $f_{\pm} = \tr_{\mathbb{C}^4}(W_{\pm})$. This happens at the cost of additional terms, which are referred to as \emph{Berry phase} $H_{\mathrm{be}}$ and \emph{Poissonian curvature} $H_{\mathrm{pc}}$ (or \emph{``no-name" term}, coined in \cite{spohn2000semiclassical}), cf. \cite{emmrich1996geometry, littlejohn1991geometric, spohn2000semiclassical}. This is the content of our second main theorem.
\begin{theorem}
    \label{thm:main2}  
Let $W_{\pm}^{\mathrm{in}} = \Pi_{\pm} W_{\mathrm{in}} \Pi_{\pm}$, where $\Pi_{\pm}$ is given by \eqref{eq:definition projections}. Under the assumptions of Theorem \ref{thm:main1} it follows that $W_{\pm} := \Pi_{\pm} W \Pi_{\pm}$ solves
    \begin{align}
    \partial_t W_{\pm}
    &= \{\lambda_{\pm},W_{\pm}\} + [H_{\mathrm{be}} + H_{\mathrm{pc}},W_\pm], \quad W_{\pm}|_{t=0} = W_{\pm}^{\mathrm{in}} \quad \text{in } H^{-1}_{\mathrm{loc}},\label{eq:relativistic Vlasov}
\end{align}
where $\lambda_{\pm}$ is given by \eqref{eq:eigenvalues time-dependent electromagnetic Dirac} and where the \emph{Berry term} $H_{\mathrm{be}}$ and the \emph{Poissonian curvature} $H_{\mathrm{pc}}$ are given by
\begin{equation}
\label{eq:berry term}
    H_{\mathrm{be}} := [\Pi_{\pm},\{\lambda_{\pm}
    ,\Pi_\pm\}-\partial_t\Pi_{\pm}],
\end{equation}
\begin{equation}
\label{eq:poissonian curvature}
    H^{\pm}_{\mathrm{pc}} :=   \pm\langle \xi-A\rangle \Pi_{\pm}\{\Pi_\mp,\Pi_\mp\}\Pi_{\pm} ,
\end{equation}
 In particular, the scalar Wigner measures $f_{\pm}(t,x,v) := \tr(W_{\pm})(t,x,\xi-A(t,x))$ solve the relativistic Vlasov equations with Lorentz force and external magnetic and electric fields $A,A_0$ for the electron and positron phase space densities $f_+$ and $f_-$, respectively, i.e. $f_{\pm}$ solves
    \begin{align}
    &\partial_t f_{\pm} \pm \frac{v}{\langle v \rangle}\cdot \nabla_x f_{\pm} + (E\pm \frac{v}{\langle v \rangle} \times B) \cdot \nabla_v f_{\pm} = 0, \quad f_{\pm}|_{t=0} = f_{\pm}^{\mathrm{in}} := \tr(W_{\pm}^{\mathrm{in}}),\label{eq:relativistic transport equation} \\
    &E= \nabla{A_0} - \partial_t A, \quad  B=\nabla \times A.
    \nonumber
\end{align}
in $H^{-1}_{\mathrm{loc}}$ with the charge and current densities
\begin{align}
    \rho(t,x) = \int_{\mathbb{R}^3} (f_++f_-)(t,x,v) d v, && J_k(t,x) = \int_{\mathbb{R}^3} \frac{v}{\langle v \rangle}(f_+ -f_-)(t,x,v) d v.
\end{align}
\end{theorem}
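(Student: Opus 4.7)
The plan is to conjugate the matrix Liouville equation \eqref{eq:main theorem transport equation} with the spectral projections $\Pi_\pm$ to isolate the electron and positron blocks $W_\pm$, and then to take $\tr_{\mathbb{C}^4}$ and perform the change of variables $v = \xi - A(t,x)$ to recover the scalar Vlasov equation. First I would sandwich \eqref{eq:main theorem transport equation} between $\Pi_\pm$. Because $P\Pi_\pm = \lambda_\pm \Pi_\pm$ and $\Pi_\pm Y \Pi_\pm = 0$ by Theorem \ref{thm:main1}, the Lagrange multiplier contribution vanishes, $\Pi_\pm [P, Y]\Pi_\pm = 0$. Since $\Pi_\pm$ is a polynomial in $P$, the constraint $[P, W] = 0$ also gives $[\Pi_\pm, W] = 0$, so $W = W_+ + W_-$ with $W_\pm = \Pi_\pm W = W\Pi_\pm = \Pi_\pm W \Pi_\pm$. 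Applying Leibniz to $\partial_t(\Pi_\pm W \Pi_\pm)$ yields
\[
\Pi_\pm (\partial_t W) \Pi_\pm = \partial_t W_\pm - (\partial_t \Pi_\pm)W_\pm - W_\pm(\partial_t \Pi_\pm),
\]
and the projector identities $\Pi_\pm^2 = \Pi_\pm$ and $\Pi_+ + \Pi_- = I_4$ imply $\Pi_\pm(\partial_\mu \Pi_\pm)\Pi_\pm = 0$ for every derivative $\partial_\mu$, i.e., $\partial_\mu \Pi_\pm$ is block-off-diagonal with respect to $\Pi_+ \oplus \Pi_-$.

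Next I would expand $\tfrac12 \Pi_\pm(\{P,W\}-\{W,P\})\Pi_\pm$ by substituting $P = \lambda_+ \Pi_+ + \lambda_- \Pi_-$ and $W = W_+ + W_-$ and distributing the derivatives via Leibniz. Three kinds of contributions arise: (a) a diagonal piece where the derivatives fall only on the scalar eigenvalues, producing $\{\lambda_\pm, W_\pm\}$; (b) single-projection pieces with exactly one derivative on a $\Pi_\pm$, which combine with the $(\partial_t \Pi_\pm)W_\pm$ and $W_\pm(\partial_t \Pi_\pm)$ corrections from the previous paragraph to reassemble into the Berry commutator $[H_{\mathrm{be}}, W_\pm]$ of \eqref{eq:berry term}; (c) double-projection pieces where two derivatives hit factors $\Pi_\pm$, producing the noncommutative Poisson self-bracket $\{\Pi_\mp, \Pi_\mp\}$ weighted by the eigenvalue gap $\lambda_+ - \lambda_- = 2\langle\xi - A\rangle$, which yields the Poissonian curvature commutator $[H_{\mathrm{pc}}^\pm, W_\pm]$ of \eqref{eq:poissonian curvature}. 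This algebraic regrouping is the main obstacle: it is essentially the calculation of \cite{spohn2000semiclassical, emmrich1996geometry, littlejohn1991geometric}, now in the time-dependent setting where the extra $-\partial_t \Pi_\pm$ appears inside $H_{\mathrm{be}}$ on equal footing with $\{\lambda_\pm, \Pi_\pm\}$, and one must carefully track which $\Pi_\pm$ sandwiches which term so that everything collapses exactly to the compact formulas \eqref{eq:berry term}--\eqref{eq:poissonian curvature}.

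For the scalar equation \eqref{eq:relativistic transport equation}, applying $\tr_{\mathbb{C}^4}$ to \eqref{eq:relativistic Vlasov} kills the commutator and, since $\lambda_\pm$ is scalar, leaves $\partial_t \tr(W_\pm) = \{\lambda_\pm, \tr(W_\pm)\}$ in the $(t,x,\xi)$-variables. Setting $v = \xi - A$ and $f_\pm(t,x,v) := \tr(W_\pm)(t,x,v+A(t,x))$, a direct chain-rule computation using $\nabla_\xi \lambda_\pm = \pm v/\langle v\rangle$ and $\nabla_x \lambda_\pm = \mp(\nabla_x A)\cdot v/\langle v\rangle - \nabla_x A_0$ transforms this equation into \eqref{eq:relativistic transport equation}: the cross terms in $\nabla_x A$ antisymmetrize via the vector identity $\sum_k v_k(\partial_{x_j}A_k - \partial_{x_k}A_j) = (v\times B)_j$ with $B = \nabla \times A$, while the $\partial_t A$ generated by the chain rule combines with $-\nabla_x A_0$ to produce $-E$. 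The charge and current density formulas follow from inspection of the $v$-moments together with the identity $\Pi_\pm \alpha_k \Pi_\pm = \pm (v_k/\langle v\rangle)\Pi_\pm$, which gives $\tr(\alpha_k W_\pm) = \pm (v_k/\langle v\rangle)\tr(W_\pm)$. All manipulations are justified in $H^{-1}_{\mathrm{loc}}(\mathbb{R}_t \times \mathbb{R}^3_x \times \mathbb{R}^3_\xi)$, the regularity provided by Theorem \ref{thm:main1}, since $\lambda_\pm$ and $\Pi_\pm$ inherit the $C^1_t H^s_x$-regularity of $A, A_0$ and only first-order derivatives act on $W_\pm$.
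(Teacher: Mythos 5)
Your proposal is correct and follows essentially the same route as the paper: sandwich the matrix Liouville equation between $\Pi_\pm$ (so the Lagrange multiplier drops out because $\Pi_\pm Y\Pi_\pm = 0$), exploit $[\Pi_\pm, W] = 0$ and the spectral decomposition $P = \lambda_+\Pi_+ + \lambda_-\Pi_-$, apply Leibniz plus the projector identity $\Pi(D\Pi)\Pi = 0$ to peel off $\partial_t W_\pm$, reorganize the remaining derivative terms into the Berry and Poissonian-curvature commutators, then take $\tr_{\mathbb{C}^4}$, change variables $v=\xi-A$, and read off the current via $\Pi_\pm\alpha_k\Pi_\pm = \pm(v_k/\langle v\rangle)\Pi_\pm$. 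The only difference is organizational: you split the expansion upfront into pure transport, single-projection and double-projection pieces, whereas the paper first treats $\Pi_j\{\lambda_j,W\}\Pi_j$ via a commutator lemma (producing transport plus part of the Berry term) and then isolates the $\Pi_j(\{\Pi_k,W\}-\{W,\Pi_k\})\Pi_j$ piece using the Poisson identity $A\{B,C\}-\{A,B\}C = \{AB,C\}-\{A,BC\}$ to produce the curvature — your step (b)/(c) reassembly is exactly this computation; the paper simply makes those two identities explicit as lemmas rather than citing the literature for the regrouping.
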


For completeness we give explicit expressions for the Berry term $H_{\mathrm{be}}$ and the Poissonian curvature $H_{\mathrm{pc}}$. Recall that $v=\xi-A(t,x)$. The Berry term $H_{\mathrm{be}}$ is given by
\begin{align}
    H_{\text{be}} &= -\frac{1}{2\langle v \rangle^2}(\beta\alpha\cdot F^{\mathrm{L}}_{\pm} -i\gamma^5 \alpha \cdot (v\times F^{\mathrm{L}}_{\pm})), 
 \\ F^{\mathrm{L}}_{\pm} &= E \pm \frac{v}{\langle v\rangle}\times B = \nabla A_0 -\partial_t A \pm \frac{v}{\langle v\rangle}\times (\nabla \times A),
\end{align}
where $F^{\mathrm{L}}_{\pm}$ denotes the Lorentz force for electrons and positrons, cf. \eqref{F_Lorentz}.
Note that the electric field in the definition of $F^{\mathrm{L}}_{\pm}$ contains the magnetic contribution $\partial_t A$ which comes from the term $\partial_t \Pi_j$ and which was omitted in \cite{gerard1997homogenization, spohn2000semiclassical} since their symbol $P$ does not depend on $t$. 

The Poissonian curvature $H_{\mathrm{pc}}$ is given by
\begin{equation}
    H_{\text{pc}} =  \pm\frac{i}{2\langle v \rangle} \Pi_{\pm} \left( \gamma^5 (\alpha \cdot B )\right) \Pi_{\pm} .
\end{equation}
The explicit derivations of these expressions can be found in the Appendix in Sections \ref{sec:berry} and \ref{sec:poissonian curvature}.


\begin{remark}
    As mentioned in the introduction, for $C^{\infty}$, time-independent potentials the limit was established in \cite{gerard1997homogenization,spohn2000semiclassical}. Here, we calculate the remainder $r^{\varepsilon}$ explicitly in order to allow for rougher, time-dependent potentials and use a Lagrange multiplier approach in order to avoid projecting the Wigner equation onto the electron and positron subspaces before passing to the limit.
\end{remark}
\begin{remark}
One could reduce the regularity needed for the Wigner transform (in particular allowing for pure states) by increasing the regularity for the electromagnetic potentials $A_0,A$, but this is not the main goal of this paper. On the other hand let us comment on the regularity chosen in Theorem \ref{thm:main1}: in the classical limit considered in this work, the trace of the projected (matrix-valued) Wigner measure associated to the family of density operators satisfying the Dirac-von Neumann equation \eqref{eq:Dirac equation} is a solution of the relativistic Vlasov equation \eqref{eq:relativistic transport equation} with external electromagnetic field defined in terms of $A_0,A$.
In order for the Cauchy problem associated with \eqref{eq:relativistic transport equation} to be well-posed – in particular, in order for a classical solution of the Cauchy problem to exist and be unique for all smooth initial
data with sufficient decay at infinity – it is natural to impose on the electromagnetic field the conditions which guarantee that one can apply the method of characteristics.

Characteristic curves of the Liouville equation above are integral curves of the differential system
\[
\dot x(t)=\frac{v(t)}{\langle v(t) \rangle}, \quad\dot v(t)=E(t,x(t))\pm \frac{v(x(t))}{\langle v(x(t))\rangle}\times B(t,x(t))\,.
\]
In order to apply the Cauchy-Lipschitz theorem to conclude that the Cauchy problem for this differential system has a unique global solution defined on the time interval $[0,T]$ for any
initial position and momentum, it is natural to require that
\[
\partial_tA,\,\,\,\nabla A,\,\,\,\nabla_xA_0,\,\,\,\nabla \partial_tA,\,\,\,\nabla^2A\text{ and }\nabla^2A_0\text{ are continuous on }[0,T]\times\mathbb R^3.
\]
See for instance \cite[Theorem 1.1 in Chapter 1]{bouchut2000kinetic}, or  \cite[Theorem 1.2.2]{hormander1997lectures}, or even  \cite[Chapter II.1]{cartan1971differential}.

The assumptions in the present paper are therefore natural in light of these requirements pertaining to the existence of the characteristic flow for the limiting Liouville equation describing
the dynamics in the classical limit.
\end{remark}
\begin{remark}
\label{remark L^2}
Note that Assumption \eqref{eq:assumption lambda} does not hold for pure states on $\mathfrak{H}$.  To see this recall that the kernel of $R$ on $\mathfrak{H}$ can be represented by 
\begin{equation}
 R (t,x,y)= \sum_{j=1}^{\infty} \lambda_j \psi_j(t,x) \overline{\psi_j(t,y)}^\top, \quad \lambda_j\geq 0, \quad \sum_{j=1}^{\infty} \lambda_j = 1,
\end{equation}
where $\{\psi_j\}_{j\in \mathbb{N}}$ is orthonormal in $\mathfrak{H}$, cf. Section \ref{sec:notations}. Using the definition of the Wigner transform \eqref{eq:definition Wigner transform with spin} and Plancherel's theorem, it is easy to see that 
\begin{equation*}
\norm{R_{\mathrm{in}}}_{\textfrak{S}^{2}}^2 = \sum_{j=1}^{\infty}\lambda_{j}^2\le (2\pi\varepsilon)^3\sup_{0<\varepsilon\le 1}\|W^{\varepsilon}\|_{L^2}^2.
\end{equation*}
By the Cauchy-Schwarz inequality
\begin{equation*}
    1=\left(\sum_{j=1}^{\infty}\lambda_{j}\right)^2\le\sum_{j=1}^{\infty}\mathbf 1_{\lambda_{j}>0}\sum_{j=1}^{\infty}\lambda_{j}^2\le\text{rank}R\cdot(2\pi\varepsilon)^3\sup_{0<\varepsilon\le 1}\|W^\varepsilon\|_{L^2}^2,
\end{equation*}
Therefore, assuming that a family $\{R^{\varepsilon},:\,0<\varepsilon\le 1\}$ of density operators on $L^2(\mathbb R^d)$ satisfies
\begin{equation*}
\sup_{0<\varepsilon\le 1}\|W^\varepsilon\|_{L^2}^2=C<\infty,
\end{equation*}
implies that 
\begin{equation*}
    \text{rank}R^{\varepsilon}\ge\frac1{(2\pi\varepsilon)^3C},\qquad\text{ so that }\varliminf_{\varepsilon\to 0}\left(\varepsilon^3\text{rank}R^\varepsilon\right)>0.
\end{equation*}
In particular, this assumption rules out the possibility that $\{R^\varepsilon\,:\,0<\varepsilon\le 1\}$ is a family of pure states (i.e. rank-one density operators), cf. Section \ref{sec:notations}.

\end{remark}

\begin{remark}
In the self-consistent case of Dirac-Maxwell, $A_k^{\varepsilon}$, $A_0^{\varepsilon}$ (now also depending on ${\varepsilon}$) are given by Maxwell's equations (here in Coulomb gauge $\nabla \cdot A=0$), 
\begin{align}
    \Delta A_0^{\varepsilon}(t,x) &= \rho^{\varepsilon}(t,x) = \int \tr(W^{\varepsilon}(t,x,\xi)) d\xi, 
\\
\left( \partial_t^2 - \Delta \right) A_k^{\varepsilon}(t,x) 
&= \mathbb{P} J^{\varepsilon}_{k}(t,x) =\int \tr(\alpha_k W^{\varepsilon}(t,x,\xi) )d \xi,
\end{align}
where $\mathbb{P}$ denotes the Leray projection onto divergence-free vector fields.
One needs to extract some sort of strong convergence of $A_k^{\varepsilon},A_0^{\varepsilon}$ as $\varepsilon \rightarrow 0$ in order to be able to pass to the limit (due to the weak convergence of $W^{\varepsilon}$). On the other hand, $\rho^{\varepsilon}$ and $J^{\varepsilon}$ are only $L^1$ uniformly in $\varepsilon$. Since the densities can be expressed as moments of the Wigner transform $W^{\varepsilon}$ (or traces thereof), additional integrability for the densities could maybe come from interpolation with the energy (such as for the Schrödinger-Poisson equation, cf. \cite{lions1993mesures, markowich1993classical}), but the energy \eqref{eq:energy density} of the Dirac equation does not have a definite sign, which is also a major obstacle to the global wellposedness of the Dirac-Maxwell equation. 
\end{remark}



\subsection{Notation}
\label{sec:notations}
The space of bounded measures on $\mathbb{R}^d$ is denoted by $\mathcal{M}(\mathbb{R}^d)$. The space of tempered distributions is denoted by $\mathcal{S}'$ where $\mathcal{S}$ is the Schwartz space.

Let 
\begin{equation}
    \mathfrak{H}^{d,m} = L^2(\mathbb{R}^d, \mathbb{C}^m).
\end{equation}
We denote the Hilbert space of Dirac 4-spinors on $\mathbb{R}^3$ by
\begin{equation}
    \mathfrak{H} \equiv \mathfrak{H}^{3,4} = L^2(\mathbb{R}^3, \mathbb{C}^4).
\end{equation}
Let $\mathbb{M}_m(\mathbb{C})$ be the space of $m\times m$ matrices with entries in $\mathbb{C}$,
\begin{equation}
    \langle v , w \rangle_{\mathbb{C}^m} = \sum_{j=1}^m \overline{v_j}w_j, \quad v,w \in \mathbb{C}^m
\end{equation}
be the inner product on $\mathbb{C}^m$, and $v^\top$ be the transpose of $v$.

We use the following notation for the so-called ``Japanese bracket"
\begin{equation}
    \langle x \rangle = \sqrt{1 +|x|^2}.
\end{equation} 

The set of bounded operators on $\mathfrak{H}$ is denoted by $\textfrak{S}^{\infty} (\mathfrak{H})$. The set of trace-class operators on $\mathfrak{H}$ is denoted by $\textfrak{S}^1( \mathfrak{H})$. More generally, for $p\in[1,\infty)$, we denote by $\textfrak{S}^p(\mathfrak{H})$ the $p$-Schatten space equipped with the norm $$\norm{A}_{\textfrak{S}^p}^p=\tr_{\mathfrak{H}}( \abs{A}^p), \quad \abs{A}=\sqrt{A^*A}.$$

In particular we denote the trace in $\mathfrak{H}$ by $\tr_{\mathfrak{H}}(\cdot)$ and the trace on $\mathbb{C}^4$ by $\tr_{\mathbb{C}^4}(\cdot)$, In particular, for a trace-class operator $A \in \textfrak{S}^1(\mathfrak{H})$ with kernel $K_A(x,y)\in L^2(\mathbb{R}^3_x\times\mathbb{R}^3_y,\mathbb{M}_4(\mathbb{C}))$ we have
\begin{equation}
    \tr_{\mathfrak{H}}(A) = \int_{\mathbb{R}^3_x} \tr_{\mathbb{C}^4}(K_A(x,x)) dx = \int_{\mathbb{R}^3_x} \sum_{j=1}^4 (K_A)_{jj}(x,x) dx.
\end{equation}

A \emph{density operator} $R\in \textfrak{S}^1( \mathfrak{H})$ is a self-adjoint ($R=R^*$), nonnegative ($\langle R\psi,\psi\rangle \geq 0$ for all $\psi \in \mathfrak{H}$) operator 
with trace $$\tr_{\mathfrak{H}}(R) = \int_{\mathbb{R}^3_x} \tr_{\mathbb{C}^4}(R(x,x)) dx = \int_{\mathbb{R}^3_x} \sum_{j=1}^4 R_{jj}(x,x) dx = 1.$$ A rank-one density operator is a \emph{pure state}.  On $\mathfrak{H}$, the operator $R$ is an integral operator 
with integral kernel $(x,y)\mapsto R(x,y) \in L^2(\mathbb{R}_x^d\times \mathbb{R}_y^d,\mathbb{M}_4(\mathbb{C}))$, which we abusively denote by the same letter. Since $R=R^*$ is trace-class, it is compact and by the spectral theorem 
there is a sequence of real eigenvalues $(\lambda_j)_{j\ge 1}$ and a Hilbert basis (i.e. a complete orthonormal system) $\{\psi_j\,:\,j\ge 1\} \subset \mathfrak{H}$ such that
\begin{equation}\label{SpecDecR}
    R(x,y) = \sum_{j=1}^{\infty} \lambda_j \psi_j(x) \overline{\psi_j(y)}^\top.
\end{equation}
Since $R$ is a density operator,
\begin{equation}\label{CondLambda}
\lambda_j \geq 0\quad\text{since }R=R^*\ge 0,\quad\text{ and }\sum_{j=1}^{\infty}\lambda_j 
=\tr_{\mathfrak{H}}(R)=1,
\end{equation}
since $\{\psi_j\}$ is an orthonormal family. In particular, the restriction of the integral kernel to the diagonal, 
i.e. $R(x,x)$, is a well-defined element of $L^1(\mathbb{R}^d_x, \mathbb{M}_4(\mathbb{C}))$ and we can define the density function $\rho\in L^1(\mathbb{R}_x^d)$ as in \eqref{eq:densities in terms of kernel}.

For two matrices $A,B$, the commutator is defined as $[A,B] = AB-BA$ and the anticommutator as $[A,B]_+ = AB+BA$.  For two (not necessarily commuting) matrix-valued functions $F,G$ on phase space $\mathbb{R}^d_x \times \mathbb{R}^d_{\xi}$ we define the Poisson bracket as
\begin{equation}
    \{F,G\} := \sum_{k=1}^d \partial_{x_k} F \partial_{\xi_k} G-\partial_{\xi_k} F \partial_{x_k} G,
\end{equation}
where the derivatives are taken in the sense of distributions. Note that the order is important and in general, $\{F,G\}\neq-\{G,F\}$. Depending on the context, $\{\,\cdot\,,\cdot\,\}$ may also denote the standard Poisson bracket for scalar functions.
On the space $\mathcal{S}'(\mathbb{R}_x^d\times \mathbb{R}^d_{\xi},\mathbb{M}_m(\mathbb{C}))$ we define the duality bracket
\begin{equation}
    \langle A,B\rangle := \int \tr(A^{\ast} B) dx d \xi.
\end{equation}

The letter $C$ denotes a general positive constant that may vary from one line to another. 
\section{Proof of the main theorems}

\subsection{Outline of the proof}

In Section \ref{sec:preliminaries} we prove the existence of solutions to the Dirac-von Neumann equation \eqref{eq:Dirac equation} and the conservation of mass for suitably regular time-dependent external fields. Moreover, we show that the remainer $r^{\varepsilon}$ in the expansion of the Moyal product on the right hand side of the Dirac-Wigner equation \eqref{eq:Wigner matrix equation_2} converges to zero. Finally, we gather some technical estimates on the regularity of the projections $\Pi_{\pm}$. 

In Section \ref{sec:proofmain1} we prove Theorem \ref{thm:main1}, which we split into multiple Lemmas. First we prove the convergence of Wigner equations with general symbols $P$ satisfying some regularity assumptions towards the matrix-valued Liouville equation with Lagrange multiplier $X$. Then we examine the structure of $X$ in the case of the Dirac equation and show that it is given by $X=-i[P,Y]$ with $Y$ given by \eqref{eq:Y formula}. The fact that $Y$ depends only on $P$ and $W$ allows for the prescription of initial data $W_{\mathrm{in}}$, which is identified as the limit of the data $W^{\varepsilon}_{\mathrm{in}}$ of the Dirac-Wigner equation. Finally, we prove the convergence of the charge and current densities and the tightness of the sequence $R^{\varepsilon}(x,x)$.

In Section \ref{sec:proofmain2} we prove Theorem \ref{thm:main2}, where we exploit the algebraic properties of the projections $\Pi_{\pm}$ in order to obtain the relativistic transport equations for the densities $f_{\pm}$.

\subsection{Preliminaries}
\label{sec:preliminaries}
We first discuss the question of existence of solutions to the Dirac-von Neumann equation \eqref{eq:Dirac equation} for external time-dependent fields. 

\begin{proposition}
\label{thm:existence dirac}
Let $r >3$ and $A_{\mu} = B_{\mu} + \tilde{B}_{\mu}$  with $B_{\mu} \in C^1(\mathbb{R}, L^r(\mathbb{R}^3,\mathbb{R}))$ and  $\tilde{B}_{\mu} \in C^1(\mathbb{R}, L^{\infty}(\mathbb{R}^3,\mathbb{R}))$ for $\mu = 0,1,2,3$.  Then for all $t \in \mathbb{R}$ the Dirac Hamilton operator \eqref{eq: dirac hamiltonian} with domain $\mathcal{D}(H(t)) = H^1(\mathbb{R}^3,\mathbb{C}^4)$ is self-adjoint. Moreover, there exists a  jointly strongly continuous two-parameter family  of unitary propagators $U_{H}(t;s)$ satisfying 
\begin{align}
i \partial_t U_H(t;s) \psi = H(t) U_H(t;s) \psi  
\quad \text{for all} \;  \psi \in H^1(\mathbb{R}^3,\mathbb{C}^4),
\end{align}
and $U_H(s;s) = 1$ for all $s \in \mathbb{R}$.
\end{proposition}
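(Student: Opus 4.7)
The plan is to establish the proposition in two stages: first show that each $H(t)$ is self-adjoint on the time-independent domain $H^1(\mathbb{R}^3,\mathbb{C}^4)$ via a Kato--Rellich argument, and then invoke Kato's theorem on time-dependent evolution equations with constant domain to produce the jointly strongly continuous unitary propagator $U_H(t;s)$.

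For the self-adjointness of $H(t)$, I would first record that the free Dirac operator $H_0 := \alpha\cdot(-i\varepsilon\nabla) + \beta$ satisfies $H_0^2 = (-\varepsilon^2\Delta + 1)I_4$ by the Clifford relations \eqref{eq:clifford algebra}, so $H_0$ is self-adjoint on $H^1(\mathbb{R}^3,\mathbb{C}^4)$ with graph norm equivalent to the $H^1$ norm. The perturbation $V(t,x) := -\alpha\cdot A(t,x) - A_0(t,x)I_4$ is a symmetric multiplication operator. The $\tilde B_\mu$-contribution is bounded on $L^2$. For the $B_\mu$-contribution, H\"older's inequality together with the Sobolev embedding $H^1(\mathbb{R}^3)\hookrightarrow L^{2r/(r-2)}(\mathbb{R}^3)$, valid because $r>3$ forces $2r/(r-2)\in(2,6)$, yields $\|B_\mu \psi\|_{L^2}\le C\|B_\mu\|_{L^r}\|\psi\|_{H^1}$. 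Splitting $B_\mu$ into an $L^\infty$ piece plus an $L^r$ remainder of arbitrarily small norm (using density of bounded compactly supported functions in $L^r$) makes the relative $H_0$-bound of $V(t)$ arbitrarily small, and the Kato--Rellich theorem then gives self-adjointness of $H(t) = H_0 + V(t)$ on $H^1$ for each $t\in\mathbb R$.

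For the construction of the propagator, I would invoke the version of Kato's theorem for time-dependent self-adjoint generators on a fixed domain (see e.g.\ Reed--Simon, \emph{Methods of Modern Mathematical Physics}, Vol.~II, Thm.~X.70, or Kato's classical paper \emph{Integration of the equation of evolution in a Banach space}). The hypotheses to check are: (i) each $H(t)$ is self-adjoint on the common domain $H^1$, established above; (ii) the family $\{H(t)\}$ is stable in Kato's sense, which is automatic because every $H(t)$ generates a unitary group so products $\prod_j e^{-i(t_{j+1}-t_j)H(t_j)}$ all have operator norm $1$; and (iii) the map $t\mapsto H(t)(H(0)+i)^{-1}$ is strongly continuously differentiable as an $\mathcal{L}(L^2)$-valued map. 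For (iii), I would write
\[
H(t)(H(0)+i)^{-1} = H_0(H(0)+i)^{-1} + V(t)(H(0)+i)^{-1},
\]
where the first summand is time-independent and bounded, while the $C^1$ assumption on $A_\mu$ in $t$ ensures that $V(t)$ and $\partial_t V(t) = -\alpha\cdot\partial_tA - \partial_tA_0$ both lie in $L^r + L^\infty$ uniformly and depend continuously on $t$; composing with $(H(0)+i)^{-1}\in\mathcal{L}(L^2,H^1)$ gives the required $C^1$ dependence in operator norm. Kato's theorem then furnishes the jointly strongly continuous unitary propagator $U_H(t;s)$ satisfying $U_H(s;s)=I$ and $i\partial_t U_H(t;s)\psi = H(t)U_H(t;s)\psi$ for $\psi\in H^1$.

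The main obstacle is simply threading the low regularity of the potentials through Kato's hypotheses: in $x$ they are only in $L^r + L^\infty$ (rather than smooth, as in many standard Dirac propagator constructions), and in $t$ they are only $C^1$. The splitting $A_\mu = B_\mu + \tilde B_\mu$ is engineered precisely so that the singular $L^r$ piece with $r>3$ is integrable enough for the three-dimensional Sobolev embedding to render it a Kato-small perturbation, while the $L^\infty$ piece absorbs any non-decaying behaviour at infinity; the joint strong continuity of $U_H$ in $(t,s)$ is then automatic once the abstract hypotheses have been verified.
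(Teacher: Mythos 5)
Your proposal is correct and follows essentially the same strategy as the paper: Kato--Rellich on $H^1(\mathbb{R}^3,\mathbb{C}^4)$ for self-adjointness of each $H(t)$, followed by the Reed--Simon/Kato theorem for time-dependent generators on a fixed domain to build $U_H(t;s)$. The one small technical divergence is how the relative $\widetilde H$-bound of the $L^r$ part of the potential is made smaller than $1$: you split $B_\mu$ into a bounded compactly supported piece plus an $L^r$ remainder of small norm, whereas the paper uses the interpolation estimate $\|B_\mu f\|_{L^2}\le C\|B_\mu\|_{L^r}\|f\|_{L^2}^{1-\theta_r}\|f\|_{H^1}^{\theta_r}$ with $\theta_r=3/r\in(0,1)$ together with Young's inequality to obtain a relative bound $a_{\theta_r}<1$ directly, without any approximation argument; both are standard and equivalent in effect. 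For the propagator step the paper verifies strong $C^1$-dependence of $t\mapsto H(t)$ by estimating the difference quotient $h^{-1}(H(t+h)-H(t))+\alpha\cdot\dot A+\dot A_0$ in operator norm from $H^1$ to $L^2$ using the same inequality, which is the concrete form of your condition that $t\mapsto H(t)(H(0)+i)^{-1}$ is $C^1$ in $\mathcal L(L^2)$; your phrasing of this step is slightly more schematic but the content is the same.
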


\begin{proof}
Let $\widetilde{H} = -i\varepsilon  \alpha \cdot \nabla  + \beta $ denote the free Dirac Hamiltonian which is a self-adjoint operator on the domain $H^1(\mathbb{R}^3,\mathbb{C}^4)$ \cite[Theorem 1.1]{thaller1992dirac}.
Moreover, let $f \in H^1(\mathbb{R}^3, \mathbb{C})$. By means of H\"older's inequality,  the interpolation inequality and Sobolev's inequality one can prove that for every $r >3$ there exists $\theta_r \in (0,1)$ such that for all $\mu \in \{0,1,2,3 \}$
\begin{align}
\label{eq:proof existence of unitary group auxilliary bound}
\norm{B_{\mu}(t,\cdot) f}_{L^2(\mathbb{R}^3,\mathbb{C})} &\leq C \norm{B_{\mu}(t,\cdot) }_{L^r(\mathbb{R}^3,\mathbb{C})}  \,   \norm{f} _{L^2(\mathbb{R}^3,\mathbb{C})}^{1- \theta_r}
\norm{f}_{H^1(\mathbb{R}^3, \mathbb{C})}^{\theta_r}  .
\end{align}  
Together with Young's ineqality for products this leads to ($\psi \in H^1(\mathbb{R}^3,\mathbb{C}^4)$) 
\begin{align}
&\big\|(H(t) - \widetilde{H}) \psi \big\|_{\mathfrak{H}}
\nonumber \\
&\quad \leq a_{\theta_r} \norm{\psi}_{H^1(\mathbb{R}^3,\mathbb{C}^4)} +  \sup_{\mu \in \{0,1,2,3,\}} \Big( b_{\theta_r} \norm{B_{\mu}(t,\cdot) }_{L^r(\mathbb{R}^3,\mathbb{C})}^{\frac{1}{1 - \theta_r}}
+ \norm{\tilde{B}_{\mu}(t,\cdot) }_{L^{\infty}(\mathbb{R}^3,\mathbb{C})} \Big)
 \norm{\psi}_{\mathfrak{H}}
\end{align}
with $0 < a_{\theta_r} < 1 $ and $b_{\theta_r} > 0$.  By the Kato--Rellich theorem this proves the self-adjointness of $H(t)$ for all $t \in \mathbb{R}$.  Note that \eqref{eq:proof existence of unitary group auxilliary bound} implies 
\begin{align}
&\norm{\Big( h^{-1} \big( H(t+h) - H(t) \big) + \alpha \cdot \dot{A}(t,\cdot) + \dot{A}_0(t, \cdot) I_{4} \Big) \psi}_{\mathfrak{H}}
\nonumber \\
&\quad \leq C \norm{\psi}_{H^1(\mathbb{R}^3,\mathbb{C}^4)} 
\sum_{\mu = 0}^3
\Big(
\norm{h^{-1} \big( B_{\mu}(t+h,\cdot) - B_{\mu}(t,\cdot) \big)
- \dot{B}_{\mu}(t, \cdot) }_{L^r(\mathbb{R}^3, \mathbb{C})} 
\nonumber \\
&\qquad \qquad \qquad \qquad \qquad + 
\norm{h^{-1} \big( \tilde{B}_{\mu}(t+h,\cdot) - \tilde{B}_{\mu}(t,\cdot) \big)
- \dot{\tilde{B}}_{\mu}(t, \cdot) }_{L^{\infty}(\mathbb{R}^3, \mathbb{C})} 
\Big) .
\end{align}
Using that both $B_{\mu}$ and $\tilde{B}_{\mu}$ are continuous differentiable in time let us conclude the strong differentiability of the mapping $t \mapsto H(t)$, proving the existence of the two-parameter family of propagators (see \cite[Theorem X.70]{ReedSimon1975} and \cite[Theorem 2.2]{GS2014}).
\end{proof}

\begin{corollary}
\label{thm:corollary existence}
Let $(A_0,A)$ and $U_{H}$ be as in Proposition~\ref{thm:existence dirac}.  Moreover,  let  $ R_{\mathrm{in}} \in \mathfrak{S}^{2}(\mathfrak{H})$ be a density operator such that $$\tr_{\mathfrak{H}}\left( \sqrt{1 - \Delta} R_{\mathrm{in}} \sqrt{1 - \Delta} \right) < + \infty.$$ Then  \begin{equation}
\label{eq:evolution R}R(t) := U_H(t;0) R_{\mathrm{in}} U_H(0;t),\end{equation} is the unique $C^1 ( \mathbb{R},  \mathfrak{S}^{2}(\mathfrak{H}) )$ solution to the Dirac-von Neumann equation~\eqref{eq:Dirac equation} with initial datum $R_{\mathrm{in}}$.
\end{corollary}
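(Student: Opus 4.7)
The plan is to construct the solution explicitly by unitary conjugation, and derive both existence and uniqueness from properties of the propagator $U_H$ provided by Proposition~\ref{thm:existence dirac}. Set
\[
R(t) := U_H(t;0)\, R_{\mathrm{in}}\, U_H(0;t),\qquad U_H(0;t) = U_H(t;0)^*.
\]
Since $U_H(t;0)$ is unitary, $R(t)$ inherits self-adjointness, non-negativity, trace $1$ and the identity $\|R(t)\|_{\mathfrak{S}^2}=\|R_{\mathrm{in}}\|_{\mathfrak{S}^2}$ for every $t\in\mathbb{R}$, so $R(t)$ is a density operator in $\mathfrak{S}^2(\mathfrak{H})$. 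Joint strong continuity of $U_H$ combined with the ideal property of the Hilbert--Schmidt class (if $U_n\to U$ strongly and $K\in\mathfrak{S}^2$, then $U_nK\to UK$ in $\mathfrak{S}^2$ by dominated convergence on the Hilbert--Schmidt series of $K$) then upgrades to $R\in C^0(\mathbb{R},\mathfrak{S}^2)$.

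To obtain $C^1$ regularity and the Dirac--von Neumann equation \eqref{eq:Dirac equation}, the assumption $\tr_{\mathfrak{H}}(\sqrt{1-\Delta}\,R_{\mathrm{in}}\,\sqrt{1-\Delta})<\infty$ is essential. By cyclicity of the trace it is equivalent to $(1-\Delta)^{1/2}R_{\mathrm{in}}^{1/2}\in\mathfrak{S}^2$, hence (multiplying on the right by the bounded operator $R_{\mathrm{in}}^{1/2}$) to $(1-\Delta)^{1/2}R_{\mathrm{in}}\in\mathfrak{S}^2$; equivalently, $R_{\mathrm{in}}\colon\mathfrak{H}\to H^1(\mathbb{R}^3,\mathbb{C}^4)$ is bounded. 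Splitting the difference quotient
\[
\tfrac{1}{h}\bigl(R(t+h)-R(t)\bigr)=\tfrac{1}{h}\bigl(U_H(t+h;0)-U_H(t;0)\bigr)R_{\mathrm{in}}U_H(0;t)+U_H(t+h;0)R_{\mathrm{in}}\tfrac{1}{h}\bigl(U_H(0;t+h)-U_H(0;t)\bigr),
\]
and invoking the strong differentiability of $U_H$ on $H^1$ from Proposition~\ref{thm:existence dirac} together with the factorisation $R_{\mathrm{in}}=(1-\Delta)^{-1/2}\cdot[(1-\Delta)^{1/2}R_{\mathrm{in}}]$ (bounded times Hilbert--Schmidt), each term converges in $\mathfrak{S}^2$, giving
\[
\partial_t R(t) = \tfrac{1}{i\varepsilon}\bigl[H(t),R(t)\bigr],
\]
where $H(t)R(t)$ and $R(t)H(t)$ individually lie in $\mathfrak{S}^2$ because $R(t)\colon\mathfrak{H}\to H^1$ is bounded and $H(t)\colon H^1\to\mathfrak{H}$ is bounded. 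Time-continuity of this derivative follows from the norm continuity of $t\mapsto H(t)(1-\Delta)^{-1/2}$, itself a consequence of the time regularity of the potentials in Proposition~\ref{thm:existence dirac}.

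For uniqueness, suppose $\tilde R\in C^1(\mathbb{R},\mathfrak{S}^2)$ satisfies \eqref{eq:Dirac equation} with $\tilde R(0)=R_{\mathrm{in}}$, and define $S(t):=U_H(0;t)\,\tilde R(t)\,U_H(t;0)$. Using $\partial_t U_H(t;0)=-i\varepsilon^{-1}H(t)U_H(t;0)$ together with $i\varepsilon\partial_t\tilde R=[H(t),\tilde R]$, the three contributions to $\partial_t S$ cancel algebraically, so $S(t)\equiv R_{\mathrm{in}}$ and therefore $\tilde R(t)=R(t)$. To make this rigorous in $\mathfrak{S}^2$, one tests against Schwartz vectors $\phi,\psi\in\mathcal{S}(\mathbb{R}^3,\mathbb{C}^4)$, for which the scalar pairing $t\mapsto\langle\phi,S(t)\psi\rangle$ is genuinely $C^1$ with vanishing derivative, and concludes by density. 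The main obstacle throughout is upgrading convergence from the strong operator topology on $H^1$ (the natural topology produced by Proposition~\ref{thm:existence dirac}) to the Hilbert--Schmidt topology required for $C^1(\mathbb{R},\mathfrak{S}^2)$; this is precisely what the kinetic-energy assumption enables, by forcing the range of $R_{\mathrm{in}}$ into $H^1$ in a quantitatively controlled way.
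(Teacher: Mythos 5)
Your proposal is correct, and since the paper states this as a corollary without supplying a proof, your construction via unitary conjugation of the initial datum, with the kinetic-energy hypothesis upgrading strong differentiability on $H^1$ to $C^1$-regularity in $\mathfrak{S}^2$, is exactly the standard argument one would insert. The key mechanisms are all in place: writing the trace condition as $(1-\Delta)^{1/2}R_{\mathrm{in}}^{1/2}\in\mathfrak{S}^2$ (hence $(1-\Delta)^{1/2}R_{\mathrm{in}}\in\mathfrak{S}^2$), factoring $R_{\mathrm{in}}=(1-\Delta)^{-1/2}[(1-\Delta)^{1/2}R_{\mathrm{in}}]$ so that a strongly convergent family of uniformly bounded operators hitting a fixed Hilbert--Schmidt factor converges in $\mathfrak{S}^2$, and proving uniqueness by conjugating a putative solution back via $S(t)=U_H(0;t)\tilde R(t)U_H(t;0)$ and testing the resulting $C^1$ scalar pairings against Schwartz vectors.

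Two small imprecisions, neither fatal: the phrase ``equivalently, $R_{\mathrm{in}}\colon\mathfrak H\to H^1$ is bounded'' overstates the case, since $(1-\Delta)^{1/2}R_{\mathrm{in}}\in\mathfrak{S}^2$ is strictly stronger than boundedness of $R_{\mathrm{in}}\colon\mathfrak{H}\to H^1$ (only the forward implication is used, so the argument is unaffected); and the justification that $H(t)R(t)$ and $R(t)H(t)$ lie in $\mathfrak{S}^2$ needs one more factor than boundedness of $R(t)\colon\mathfrak{H}\to H^1$ — it is rather the Hilbert--Schmidt property of $(1-\Delta)^{1/2}R(t)=\big[(1-\Delta)^{1/2}U_H(t;0)(1-\Delta)^{-1/2}\big]\,(1-\Delta)^{1/2}R_{\mathrm{in}}\,U_H(0;t)$, together with boundedness of $H(t)(1-\Delta)^{-1/2}$, that gives $H(t)R(t)\in\mathfrak{S}^2$. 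You also only wrote the left factorisation for the difference quotient; the second term in your telescoping requires the mirror factorisation (or taking adjoints, since $\mathfrak{S}^2$-convergence is adjoint-stable). Finally, note that Proposition~\ref{thm:existence dirac} as written has $i\partial_t U_H=H U_H$ without $\varepsilon$ while the von Neumann equation \eqref{eq:Dirac equation} carries $i\varepsilon\partial_t$; you implicitly (and correctly) use $i\varepsilon\partial_t U_H=HU_H$, which is what is actually needed for the corollary.
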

By Plancherel's theorem we deduce the following Corollary.
\begin{corollary}
\label{thm:existence wigner}
Let $(A_0,A)$ be as in Proposition \ref{thm:existence dirac} and $R_{\mathrm{in}}$ as in Corollary \ref{thm:corollary existence}. Then the Wigner transform $W^{\varepsilon}(t)= W^{\varepsilon}[R(t)]$, where $R(t)$ is given by \eqref{eq:evolution R}, satisfies the Dirac-Wigner equation \eqref{eq:Wigner matrix equation_2}-\eqref{eq:remainder of Wigner matrix equation} in the sense of distributions, where $W^{\varepsilon}(0) = W^{\varepsilon}_{\mathrm{in}} = W^{\varepsilon}[R_{\mathrm{in}}]$. The evolution of $W^{\varepsilon}(t)$ conserves mass, i.e.
\begin{equation}
    \|W^{\varepsilon}(t,\cdot,\cdot)\|_{L^2(\mathbb{R}_x^3\times \mathbb{R}^3_{\xi})} = \|W^{\varepsilon}_{\mathrm{in}}\|_{L^2(\mathbb{R}_x^3\times \mathbb{R}^3_{\xi})}.
\end{equation}
\end{corollary}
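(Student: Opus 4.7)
The plan is to combine Corollary~\ref{thm:corollary existence}, the explicit derivation of the Dirac-Wigner equation carried out in Section~\ref{subsection:Derivation of the Dirac-Wigner equation}, and a Plancherel-type identity relating the $L^2$ norm of the Wigner transform to the Hilbert-Schmidt norm of the density operator.

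First, by Corollary~\ref{thm:corollary existence}, the density operator $R(t)=U_H(t;0)R_{\mathrm{in}}U_H(0;t)$ belongs to $C^1(\mathbb{R},\mathfrak{S}^2(\mathfrak{H}))$ and satisfies the von Neumann equation $i\varepsilon\partial_tR=[H,R]$. The Wigner transform \eqref{eq:definition Wigner transform with spin} is, up to the constant $(2\pi\varepsilon)^{-3/2}$, an isometry from $\mathfrak{S}^2(\mathfrak{H})$ onto $L^2(\mathbb{R}^6_{x,\xi},\mathbb{M}_4(\mathbb C))$ (see below), so $W^{\varepsilon}\in C^1(\mathbb{R},L^2(\mathbb{R}^6))$ with $W^{\varepsilon}(0)=W^{\varepsilon}[R_{\mathrm{in}}]$. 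Taking the Wigner transform of both sides of the von Neumann equation and applying the computation of Section~\ref{subsection:Derivation of the Dirac-Wigner equation} — which expands $W^{\varepsilon}[[H,R]]$ by writing the Weyl symbol of the commutator and isolating the $O(1/\varepsilon)$ commutator piece, the $O(1)$ Poisson bracket piece, and the explicit remainder $r^{\varepsilon}$ of \eqref{eq:remainder of Wigner matrix equation} — yields the Dirac-Wigner equation \eqref{eq:Wigner matrix equation_2} in $\mathcal{D}'(\mathbb{R}_t\times\mathbb{R}^6_{x,\xi})$.

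For the mass conservation, I would establish the identity
\begin{equation*}
\|W^{\varepsilon}[R]\|_{L^2(\mathbb{R}^6,\mathbb{M}_4)}^2=\frac{1}{(2\pi\varepsilon)^3}\|R\|_{\mathfrak{S}^2(\mathfrak{H})}^2
\end{equation*}
by a direct computation: for fixed $x$, the Wigner transform is the $(2\pi)^{-3}$-normalized Fourier transform in $y$ of the kernel $y\mapsto R(x+\tfrac{\varepsilon y}{2},x-\tfrac{\varepsilon y}{2})$; applying Plancherel entrywise in $\xi\leftrightarrow y$ and then making the change of variables $u=x+\tfrac{\varepsilon y}{2}$, $v=x-\tfrac{\varepsilon y}{2}$ (with Jacobian $\varepsilon^{-3}$) converts the integral over $(x,y)$ into $\varepsilon^{-3}\iint\|R(u,v)\|_{HS}^2\,du\,dv=\varepsilon^{-3}\|R\|_{\mathfrak{S}^2}^2$. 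Since $R(t)$ is obtained from $R_{\mathrm{in}}$ by unitary conjugation, its Hilbert-Schmidt norm is conserved, and the mass conservation for $W^{\varepsilon}$ follows immediately.

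There is no substantive obstacle: the existence theory and the explicit form of the Dirac-Wigner equation are already in hand from Proposition~\ref{thm:existence dirac}, Corollary~\ref{thm:corollary existence}, and the companion derivation in Section~\ref{subsection:Derivation of the Dirac-Wigner equation}. The only item requiring care is bookkeeping the constants in the Plancherel identity and the change of variables, both of which are routine. In particular, the derivation of the Dirac-Wigner equation itself can be stated first for Schwartz-class kernels — where all integrals converge absolutely — and then extended to $\mathfrak{S}^2$ kernels by density, using the continuity of the Wigner transform from $\mathfrak{S}^2$ to $L^2$ and the boundedness of the operators $\theta[A_\mu]$, $\Delta[A_k]$ on the relevant distribution space under the assumed regularity of $A_0,A$.
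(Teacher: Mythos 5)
Your proof is correct and takes essentially the same approach as the paper, which states only ``By Plancherel's theorem we deduce the following Corollary.'' Your Plancherel identity $\|W^{\varepsilon}[R]\|_{L^2}^2=(2\pi\varepsilon)^{-3}\|R\|_{\mathfrak{S}^2}^2$, together with unitary invariance of the Hilbert--Schmidt norm, is exactly the intended argument for mass conservation, and the rest correctly assembles Corollary~\ref{thm:corollary existence} with the computation in Appendix~\ref{subsection:Derivation of the Dirac-Wigner equation}.
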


Next we show that the remainder $r^{\varepsilon}$ in the Dirac-Wigner equation \eqref{eq:Wigner matrix equation_2} converges to zero for suitably regular external potentials. The remainder $r^{\varepsilon}$ contains a second order finite difference of $A$, expressed in the pseudodifferential operator $\Delta[A_k]$, defined in \eqref{eq:PDO_2}, which arises due to the oscillating term $\tfrac{1}{i\varepsilon}[P,W^{\varepsilon}]$ in \eqref{eq:Wigner matrix equation_2} and which scales with $\varepsilon$. 
\begin{lemma}
\label{thm:limit remainder}
Let $A_0,A\in L^{\infty}_{\mathrm{loc}}(\mathbb{R}_t,H^{s}_{{\mathrm{loc}}}(\mathbb{R}^3_x))$ with $s\geq 1$  and $W^{\varepsilon} \in L^{\infty}(\mathbb{R}_t,L^2(\mathbb{R}^3_x \times \mathbb{R}^3_{\xi},\mathbb{M}_4(\mathbb{C})))$ uniformly in $\varepsilon$ with Wigner measure $W\in L^{\infty}(\mathbb{R}_t,L^2(\mathbb{R}^3_x \times \mathbb{R}^3_{\xi},\mathbb{M}_4(\mathbb{C})))$. Then 

\begin{enumerate}[(\alph*)]
    \item \label{eq:convergence remainder a} The remainder $r^{\varepsilon}$, defined in \eqref{eq:remainder of Wigner matrix equation},  converges to zero in $L^{\infty}(\mathbb{R}_t,\mathcal{S}'(\mathbb{R}^3_x \times \mathbb{R}^3_{\xi})) $ as $\varepsilon \rightarrow 0$.
    \item \label{eq:convergence remainder b} If in addition, $A_0,A \in H^s(\mathbb{R}^3)$, $s>5/2$, then $r^{\varepsilon}$ is bounded in $L^{\infty}([-T,T],H^{-1}(\mathbb{R}^3_x \times \mathbb{R}^3_{\xi})$.
\end{enumerate}

\end{lemma}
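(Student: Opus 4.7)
Both parts rest on a Fourier-side analysis of the pseudo-differential operators $\theta[g]$ and $\varepsilon\Delta[g]$. Expanding $g$ in the Fourier variable $k$ dual to $x$, the kernels in \eqref{eq:PDO}--\eqref{eq:PDO_2} take the form $\int\widehat g(k)e^{ik\cdot x}m_{\varepsilon}(k,y)\,dk$ with trigonometric multipliers
\[
m^{\theta}_{\varepsilon}(k,y)=\tfrac{2\sin(\varepsilon k\cdot y/2)}{\varepsilon}\quad\text{and}\quad m^{\Delta}_{\varepsilon}(k,y)=-\tfrac{4\sin^{2}(\varepsilon k\cdot y/4)}{2i\varepsilon^{2}}.
\]
Pointwise $m^{\theta}_{\varepsilon}(k,y)\to k\cdot y$, which recovers $-\nabla_{x}g\cdot\nabla_{\xi}$ after Fourier inversion in $y$, while $|m^{\Delta}_{\varepsilon}|\le \tfrac{1}{8}|k|^{2}|y|^{2}$. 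The quantitative input is the interpolation estimates
\[
|m^{\theta}_{\varepsilon}(k,y)-k\cdot y|\le C\min\!\bigl(\varepsilon^{2}|k|^{3}|y|^{3},\,|k||y|\bigr),\qquad |m^{\theta}_{\varepsilon}(k,y)|\le|k||y|,\qquad |\varepsilon\, m^{\Delta}_{\varepsilon}(k,y)|\le \tfrac{1}{2}|k||y|,
\]
which let us trade $\varepsilon$-decay for derivative cost on either $g$ or the test function.

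For part (a) I fix $\phi\in\mathcal{S}(\mathbb{R}^{6}_{x,\xi})$ and pair with each of the three pieces of $r^{\varepsilon}$. In Plancherel form the pairing becomes an absolutely convergent integral over $(k,y)$ against $\widehat A(k)$ or $\widehat A_{0}(k)$ and the partial Fourier transforms of $W^{\varepsilon}$ and $\phi$ in $\xi$. The Schwartz regularity of $\phi$ provides fast decay in $y$ and in the Fourier variable of $\xi$, supplying a uniform-in-$\varepsilon$ integrable majorant; the first interpolation estimate above makes the integrand converge pointwise to zero. Dominated convergence, combined with the uniform $L^{2}$ bound on $W^{\varepsilon}$ from Corollary~\ref{thm:existence wigner}, then yields $\sup_{t}|\langle r^{\varepsilon}(t),\phi\rangle|\to 0$, which is the claimed convergence in $L^{\infty}(\mathbb{R}_{t},\mathcal{S}')$; the piece $\varepsilon\Delta[A_{k}][\alpha_{k},W^{\varepsilon}]$ in fact vanishes at the explicit rate $O(\varepsilon)$.

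For part (b) I estimate $\|r^{\varepsilon}(t)\|_{H^{-1}_{x,\xi}}$ by duality against $\phi\in H^{1}$. The classical pieces $\nabla A\cdot\nabla_{\xi}W^{\varepsilon}$ and $\nabla A_{0}\cdot\nabla_{\xi}W^{\varepsilon}$ are bounded by $\|\nabla A\|_{L^{\infty}}\|W^{\varepsilon}\|_{L^{2}}\|\phi\|_{H^{1}}$, finite via Sobolev $H^{s}\hookrightarrow W^{1,\infty}$ for $s>5/2$. For the operators $\theta[A_{k}]$, $\theta[A_{0}]$ and $\varepsilon\Delta[A_{k}]$, the uniform symbol bounds $|m^{\theta}_{\varepsilon}|,|\varepsilon m^{\Delta}_{\varepsilon}|\lesssim|k||y|$ translate via Plancherel into a pairing controlled by $\bigl(\int|\widehat g(k)||k|\,dk\bigr)\|W^{\varepsilon}\|_{L^{2}}\|\nabla_{\xi}\phi\|_{L^{2}}$; Cauchy--Schwarz then gives $\int|\widehat g(k)||k|\,dk\le C\|g\|_{H^{s}}$ provided $s>5/2$ (the threshold coming from the condition $\int\langle k\rangle^{-2s+2}\,dk<\infty$ in $\mathbb R^{3}$). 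Summing the four contributions yields $\|r^{\varepsilon}(t)\|_{H^{-1}}\le C\|W^{\varepsilon}\|_{L^{2}}$ uniformly in $\varepsilon\in(0,1]$ and $t\in[-T,T]$. The main obstacle is precisely this borderline regularity: a pointwise Taylor expansion of the finite differences would require $\nabla^{2}A\in L^{\infty}$ and therefore $s>7/2$, whereas the Fourier-trigonometric viewpoint distributes derivatives between $A$ and $\phi$ so as to match exactly the hypothesis $s>5/2$ of Theorem~\ref{thm:main1}.
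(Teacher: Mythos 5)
Your argument is correct and follows essentially the same route as the paper's: rewrite the pseudo-differential operators $\theta[g]$ and $\varepsilon\Delta[g]$ on the Fourier side in $x$, identify the trigonometric symbols, prove pointwise convergence of the symbols, and conclude by dominated convergence for (a) and by Plancherel plus Cauchy--Schwarz for (b). The paper does the same thing in a slightly less systematic form: it takes $\mathcal F_{x\to\eta}$ of the centred finite differences and applies dominated convergence on the Fourier side with majorant $C|y|^2|\eta|^2|\widehat A(\eta)|^2$, which is precisely your interpolation bound $|m^\theta_\varepsilon-k\cdot y|\le C\min(\varepsilon^2|k|^3|y|^3,|k||y|)$ in disguise. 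One substantive difference worth noting is in (b): the paper derives the $H^{-1}$ bound from the Sobolev embedding $H^s(\mathbb R^3)\hookrightarrow C^{1,\alpha}$ (i.e. $\|\nabla A\|_{L^\infty}<\infty$) and a rather terse appeal to the argument of (a), whereas you obtain the same threshold cleanly from $\int|\widehat A(k)||k|\,dk\lesssim\|A\|_{H^s}$, which requires $\int\langle k\rangle^{2-2s}\,dk<\infty$ and hence $s>5/2$ in $\mathbb R^3$; your route also makes the uniform-in-$\varepsilon$ operator norm bound $\|\theta[A]\|_{L^2\to H^{-1}},\|\varepsilon\Delta[A]\|_{L^2\to H^{-1}}\lesssim\|A\|_{H^s}$ explicit, which the paper leaves implicit. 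Two small cautions. First, $m^\theta_\varepsilon(k,y)\to k\cdot y$ recovers $+\nabla_xg\cdot\nabla_\xi$, not $-\nabla_xg\cdot\nabla_\xi$; this mirrors a sign discrepancy already present between \eqref{eq:remainder of Wigner matrix equation} and the derivation in the appendix, and does not affect your estimates, but it is worth keeping the sign bookkeeping straight. Second, when you invoke ``dominated convergence'' in (a), you must be explicit that the mechanism is to transfer the pseudo-differential operator onto the Schwartz test function $\phi$ (so that $T_\varepsilon\phi\to T_0\phi$ \emph{strongly} in $L^2_{x,\xi}$ by DCT) and then pair with the merely weakly convergent but uniformly $L^2$-bounded $W^\varepsilon$; as written, ``the integrand converges pointwise to zero'' could be misread as referring to the full integrand including $W^\varepsilon$, which does not converge pointwise.
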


\begin{proof}
We introduce the notation 
\begin{align}
r^{\varepsilon} &= \varepsilon \Delta[A_k][\alpha_k,W^{\varepsilon}] +\frac{1}{2}(-\nabla_x A_k\cdot \nabla_{\xi}-\theta[A_k])[\alpha_k, W^{\varepsilon}]_+   -(-\nabla A_0 \cdot \nabla_{\xi} -\theta [A_0])W^{\varepsilon} 
\nonumber \\ &\eqqcolon  \sum_{j=1}^3 r_j^{\varepsilon} .
\end{align}
\underline{Proof of \ref{eq:convergence remainder a}}
Denote $f_k^{\varepsilon} := [\alpha_k,W^{\varepsilon}]$. Then we rewrite $r_1^{\varepsilon}$ as
$$r_1^{\varepsilon} = \varepsilon \Delta[A_k]f^{\varepsilon}_k.$$ Now let $\psi \in \mathcal{S}(\mathbb{R}^3_x\times \mathbb{R}^3_{\xi},\mathbb{M}_4(\mathbb{C}))$. Then, 
\begin{align}
    \langle \psi,\varepsilon \Delta[A_k]f_k^{\varepsilon} \rangle_{\mathcal{S}\times \mathcal{S}'} &= \int \psi(x,\xi) e^{-i(\xi-\eta)\cdot y}\frac{A_k(x+\frac{\varepsilon y}{2})-2A_k(x)+A_k(x-\frac{\varepsilon y}{2})}{2i\varepsilon} \nonumber \\ &\qquad \qquad \qquad \cdot f^{\varepsilon}_k(x,\eta) dy d\eta d x d \xi \nonumber \\
    &= \int \mathcal{F}_{\xi \rightarrow y}[{\psi}](x,y) \frac{A_k(x+\frac{\varepsilon y}{2})-2A_k(x)+A_k(x-\frac{\varepsilon y}{2})}{2i\varepsilon} \nonumber \\ &\qquad \qquad \qquad \cdot \mathcal{F}_{\eta \rightarrow y}[{f^{\varepsilon}_k}](x,-y) dy  d x ,\label{eq:integral second order difference}
\end{align}
where \begin{equation*}
    \mathcal{F}_{\eta \rightarrow y}[{f^{\varepsilon}_k}](x,-y) \rightarrow \mathcal{F}_{\eta \rightarrow y}[{f_k}](x,-y) \quad \text{ weakly in } L^2_{x,y},
\end{equation*}
since $f^{\varepsilon}_k = [\alpha_k,W^{\varepsilon}] \rightarrow [\alpha_k,W] = f_k$ weakly in $L^2_{x,\xi}$ as $\varepsilon \rightarrow 0$.

We rewrite the second order central finite difference in\eqref{eq:integral second order difference} in the following way
\begin{equation*}
    \frac{A_k(x+\frac{\varepsilon y}{2})-2A_k(x)+A_k(x-\frac{\varepsilon y}{2})}{2i\varepsilon} = \frac{A_k(x+\frac{\varepsilon y}{2})-A(x)}{2i\varepsilon} + \frac{A_k(x-\frac{\varepsilon y}{2})-A(x)}{2i\varepsilon}.
\end{equation*}
Taking the Fourier transform in $x$ yields
\begin{equation*}
    \mathcal{F}_{x\rightarrow \eta}\left[ \frac{A_k(\cdot \pm\frac{\varepsilon y}{2})-A(\cdot)}{2i\varepsilon} \right](\eta) = \frac{1}{2}\frac{e^{\pm i \frac{\varepsilon y}{2}\cdot \eta}-1}{i\varepsilon}\widehat{A}(\eta).
\end{equation*}
Then we claim that
\begin{align}
    \left\|\left(\frac{e^{\pm i \frac{\varepsilon y}{2}\cdot \eta}-1}{i\varepsilon}\pm \frac{y}{2}\cdot \eta\right)\widehat{A}(\eta) \right\|_{L^2_{\eta}} \rightarrow 0 \quad \text{ for all } y.
    \label{eq:convergence L2 fourier}
\end{align}
Indeed, denoting
\begin{equation*}
    n_{\pm}^{\varepsilon}(\eta,y) := \left|\frac{e^{\pm i \frac{i\varepsilon y}{2}\cdot \eta}-1}{i\varepsilon}\pm \frac{y}{2}\cdot \eta\right|^2|\widehat{A}(\eta)|^2,
\end{equation*}
we observe that
\begin{equation}
    n_{\pm}^{\varepsilon}(\eta,y) \leq 2\left( \left|\frac{e^{\pm i \frac{\varepsilon y}{2}\cdot \eta}-1}{\varepsilon}\right|^2 +  \left| \frac{y}{2} \cdot \eta \right|^2\right)|\widehat{A}(\eta)|^2 \leq C |y|^2 |\eta|^2|\widehat{A}(\eta)|^2,
\end{equation}
while 
\begin{equation*}
    n_{\pm}^{\varepsilon}(\eta, y) \rightarrow 0 \quad \text{ as } \varepsilon \rightarrow 0,
\end{equation*}
pointwise for all $\eta, y$. By dominated convergence we obtain the claim in \eqref{eq:convergence L2 fourier}. Thus we conlude that, for $A \in H^s$, $s\geq 1$,
\begin{equation}
    \frac{A_k(x+\frac{\varepsilon y}{2})-2A_k(x)+A_k(x-\frac{\varepsilon y}{2})}{2i\varepsilon} \rightarrow (\nabla A_k)\cdot (iy) + (\nabla A_k) \cdot (-iy) = 0 \quad \text{ in } L^2_{x} ,\label{eq:convergence L2}
\end{equation}
for all $y$ and
\begin{equation}
    \left\|\frac{A_k(x+\frac{\varepsilon y}{2})-2A_k(x)+A_k(x-\frac{\varepsilon y}{2})}{2i\varepsilon}\right\|_{L^2_x} \leq C |y|.
\end{equation}
On the other hand, since $\psi \in \mathcal{S}(\mathbb{R}^6_{x,\xi})$, we can bound
\begin{equation}
   \int \left(\int|\mathcal{F}_{\xi \rightarrow y}[\psi](x,y)|^2 |y|^2 dy\right)^{1/2} dy \leq C .
   \label{eq:bound H^1 test function}
\end{equation}
Combining \eqref{eq:convergence L2} and \eqref{eq:bound H^1 test function} we conclude that
\begin{equation}
     \langle \psi,\varepsilon \Delta[A_k]f_k^{\varepsilon} \rangle_{\mathcal{S}\times \mathcal{S}'} \rightarrow 0 \quad \text{ as } \varepsilon \rightarrow 0.
\end{equation}

Next let $$g_k^{\varepsilon} = [\alpha_k , W^{\varepsilon}]_+,$$ and write $r_2^{\varepsilon}$ as $$r_2^{\varepsilon} = \frac{1}{2}(-\nabla_x A_k\cdot \nabla_{\xi}-\theta[A_k])g_k^{\varepsilon}.$$ Let $\psi \in \mathcal{S}(\mathbb{R}^3_x\times \mathbb{R}^3_{\xi},\mathbb{M}_4(\mathbb{C}))$.
Then,
\begin{align*}
    \langle \psi, \theta[A_k] g^{\varepsilon}_k\rangle_{\mathcal{S}\times \mathcal{S}'} = \int_{\mathbb{R}^{12}} \psi(x,\xi)e^{-i(\xi-\eta)\cdot y}\frac{1}{i\varepsilon}\left(A_k(x+\frac{\varepsilon y}{2})-A_k(x-\frac{\varepsilon y}{2})\right)g^{\varepsilon}_k(x,\eta)  d \eta d y dx d \xi .
\end{align*}
By assumption, $g^{\varepsilon}_k \rightarrow g_k $ weakly in $L^2_{x,\xi}$ where
\begin{equation*}
    g_k = [\alpha_k , W]_+.
\end{equation*}
Then, since $A\in H^s_{{\mathrm{loc}}}$, $s \geq 1$, the integral above converges to
\begin{align*}
     &\int_{\mathbb{R}^{12}} \psi(x,\xi)e^{-i(\xi-\eta)\cdot y}(\nabla A_k \cdot (-iy) g_k(x,\eta)  d \eta d y dx d \xi \\
    &\qquad = \int_{\mathbb{R}^9} \widehat{\psi}(x,y) \frac{1}{i}e^{i\eta\cdot y} (\nabla A_k \cdot y) g_k(x,\eta) d \eta d y d x \\
    &\qquad = -\int_{\mathbb{R}^6}  \nabla_{\eta}{\psi}(x,\eta) \cdot\nabla A_k   g_k(x,\eta) d \eta d x \\
    &\qquad = \langle \psi, \nabla A_k \cdot \nabla_{\xi} g_k\rangle_{\mathcal{S}\times \mathcal{S}'}.
\end{align*}
Hence, $r_2^{\varepsilon}$ converges to zero in $\mathcal{S}'$.

For $r_3^{\varepsilon}$, we consider
\begin{align*}
    \langle \psi, \theta[A_0] W^{\varepsilon}\rangle_{\mathcal{S}\times \mathcal{S}'} = \int_{\mathbb{R}^{12}} \psi(x,\xi)e^{-i(\xi-\eta)\cdot y}\frac{1}{i\varepsilon}(A_0(x+\frac{\varepsilon y}{2})-A_0(x-\frac{\varepsilon y}{2}))W^{\varepsilon}(x,\eta)  d \eta d y dx d \xi.
\end{align*}
By assumption $A_0 \in H^1_{{\mathrm{loc}}}$. Then, similarly to $r_2^{\varepsilon}$, $\langle \psi, \theta[A_0] W^{\varepsilon}\rangle$ converges to
\begin{equation}
    \langle \psi \nabla A_0 \cdot \nabla_{\xi} W\rangle_{\mathcal{S}\times \mathcal{S}'}
\end{equation}
and therefore $r_3^{\varepsilon}$ converges to zero in $\mathcal{S}'$. In total, this shows the claim.

\underline{Proof of \ref{eq:convergence remainder b}} From the boundedness and continuity of the derivatives for $A,A_0\in H^s(\mathbb{R}^3)$ with $s>5/2$ we conclude that the convergence proved in \ref{eq:convergence remainder a}  holds true in $$L^{\infty}([-T,T],H^{-1}_{\mathrm{loc}}(\mathbb{R}^3_x \times \mathbb{R}^3_{\xi}).$$
Indeed, in \eqref{eq:integral second order difference}, the second order finite difference converges to zero in $L^{\infty}$ if $A$ has a bounded continuous derivative. Then we can take $f^{\varepsilon}_k$ in $H^{-1}_{\mathrm{loc}}$. By similar reasoning we conclude for $r_2^{\varepsilon}$ and $r_3^{\varepsilon}$.
\end{proof}

The following lemma guarantees regularity of the projections for given $A$.
\begin{lemma}
\label{thm:regularity projections}
    Let $A\in C^1(\mathbb{R}_t,H^s_{\mathrm{loc}}(\mathbb{R}^3_x))$ with $s\geq 1$. Then we have that
    \begin{equation*}
        \Pi_{\pm} \in C^1(\mathbb{R}_t,H^s_{\mathrm{loc}}\cap L^{\infty}(\mathbb{R}^3_x \times \mathbb{R}^3_{\xi})).
    \end{equation*}
    \begin{proof}
        Let $z =(x,\xi)$ and
        \begin{equation*}
            \Pi_{\pm}(z) = \frac{1}{2}\left(I_4 \pm \frac{P_0(z)}{\langle v(z) \rangle}\right), \quad P_0(z) = \alpha \cdot(\xi- A(x)) +\beta, \quad v(z) = \xi-A(x).
        \end{equation*}
        Therefore,
       \begin{equation*}
            |\nabla_z P_0(z)| \lesssim 1+ |\nabla A(x)|, \quad |\nabla_z v(z)| \lesssim 1+ |\nabla A(x)|,
        \end{equation*}
        and thus
        \begin{equation*}
            \nabla_z P_0(z), \, \nabla_z v(z) \in L^{2}_{\mathrm{loc}}(\mathbb{R}^6_z), 
        \end{equation*}
        as well
        \begin{equation*}
            \frac{1}{\langle v(z) \rangle}, \, \frac{v(z)}{\langle v(z) \rangle}, \, \frac{P_0(z)}{\langle v(z) \rangle} \in L^{\infty}_{\mathrm{loc}}(\mathbb{R}^6_z).
        \end{equation*}
        From \eqref{eq:derivative Pi x}-\eqref{eq:derivative Pi xi} we conclude that 
        \begin{equation*}
            \Pi_{\pm} \in L^{\infty}_{\mathrm{loc}}(\mathbb{R}^6_z), \quad \nabla_z \Pi_{\pm} (z) \in L^2_{\text{loc}}(\mathbb{R}^6_z) \quad \Rightarrow \quad  \Pi_{\pm} \in H^1_{\mathrm{loc}}(\mathbb{R}^6_z).
        \end{equation*}
        The proof easily generalizes to $H^s_{\mathrm{loc}}(\mathbb{R}^6_z)$. 
        In particular, for $s>5/2$, we obtain by Sobolev's embedding,
        \begin{equation*}
            H^s(\mathbb{R}^3_x) \hookrightarrow C^{1,\alpha}(\mathbb{R}^3_x), \, 0<\alpha <1  \quad \Rightarrow \quad \|\nabla A\|_{\infty} \leq C.
        \end{equation*}
        That is, for $A \in H^s$ with $s>5/2$ the projections $\Pi_{\pm}$ are bounded and have Hölder continuous first derivatives for some Hölder exponent $\alpha$.
    \end{proof}
\end{lemma}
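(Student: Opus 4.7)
The plan is to exploit the key structural fact that $\Pi_{\pm}$ depends on $(t,x,\xi)$ only through the combination $v(t,x,\xi) = \xi - A(t,x)$, via $\Pi_{\pm}(t,x,\xi) = \tfrac{1}{2}(I_4 \pm P_0/\langle v\rangle)$ with $P_0 = \alpha\cdot v + \beta$. Consequently every $(t,x)$-derivative of $\Pi_{\pm}$ factors, by the chain rule, through $\partial_t A$ or $\partial_x A$ times a derivative in $v$ of a universal, smooth, bounded matrix-valued function. So the regularity of $\Pi_{\pm}$ on phase space can be read off directly from the regularity of $A$ in $x$.

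First I would establish the $L^{\infty}$ bound: since $P_0^2 = \langle v\rangle^2 I_4$ by the Clifford relations \eqref{eq:clifford algebra}, the matrix $S := P_0/\langle v\rangle$ is involutive and self-adjoint, so $\|S\|_{\mathrm{op}} = 1$ pointwise and $\|\Pi_{\pm}\|_{L^{\infty}(\mathbb R^6)} \le 1$. Then I would compute explicitly
\begin{equation*}
\partial_{\xi_k}\Pi_{\pm}(t,x,\xi) = \pm\tfrac{1}{2}\Bigl(\tfrac{\alpha_k}{\langle v\rangle} - P_0\,\tfrac{v_k}{\langle v\rangle^3}\Bigr) =: F_k(v(t,x,\xi)),
\end{equation*}
which is a smooth matrix-valued function of $v\in\mathbb R^3$, uniformly bounded (and in fact decaying like $1/\langle v\rangle$) because the denominator is always dominated by powers of $|v|$ in the numerator. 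The chain rule through $v$ gives $\partial_{x_k}\Pi_{\pm} = -\sum_j (\partial_{x_k}A_j)(F_j\!\circ\! v)$ and $\partial_t \Pi_{\pm} = -\sum_j (\partial_t A_j)(F_j\!\circ\! v)$. Iterating this, each phase-space partial $\partial_{x,\xi}^{\mu}\Pi_{\pm}$ of order $|\mu|\le s$ is a finite sum of terms of the form $\Phi_{\mu,\vec{\nu}}(v)\,\prod_{\ell} \partial_x^{\nu_\ell}A_{j_\ell}$ with $\sum_{\ell}|\nu_\ell|\le |\mu|$ and each $\Phi_{\mu,\vec{\nu}}$ a smooth, globally bounded matrix-valued function of $v$.

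Plugging this representation into the definition of the $H^s_{\mathrm{loc}}(\mathbb R^6_{x,\xi})$ seminorm and applying Fubini in $(x,\xi)$ reduces the estimate to $L^2_{\mathrm{loc}}(\mathbb R^3_x)$ bounds on products of derivatives of $A$ of total order $\le s$, which are available by hypothesis (for integer $s$ directly, for real $s\ge 1$ by interpolation between consecutive integers). The $C^1$-in-$t$ regularity follows the same way from the explicit formula for $\partial_t\Pi_{\pm}$ and the assumption $A\in C^1(\mathbb R_t, H^s_{\mathrm{loc}}(\mathbb R^3_x))$. The observation at the end of the statement about $s>5/2$ is then immediate from the Sobolev embedding $H^s(\mathbb R^3)\hookrightarrow C^{1,\alpha}(\mathbb R^3)$, which promotes $\nabla A$ to a bounded, locally Hölder continuous field and hence $\nabla_{x,\xi}\Pi_{\pm}$ to $L^{\infty}_{\mathrm{loc}}$.

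The main obstacle is purely combinatorial: tracking the multilinear terms produced by the iterated chain rule and organizing them as (bounded $v$-factor)$\times$(product of $A$-derivatives) so that Fubini applies cleanly. Analytically there is nothing delicate, because the identity $P_0^2 = \langle v\rangle^2 I_4$ pairs every inverse power $\langle v\rangle^{-k}$ with compensating powers of $|v|$ from the numerator, so all kinetic factors $\Phi(v)$ are automatically globally bounded and no integrability issue in $\xi$ arises.
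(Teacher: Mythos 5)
Your proof is correct and rests on the same idea as the paper's proof, namely the explicit derivative formulas for $\Pi_{\pm}$ and the chain rule through $v=\xi-A(t,x)$; the formulas you compute for $\partial_{\xi_k}\Pi_\pm$ and the chain-rule relations $\partial_{x_k}\Pi_\pm=-\sum_j(\partial_{x_k}A_j)F_j(v)$, $\partial_t\Pi_\pm=-\sum_j(\partial_tA_j)F_j(v)$ reproduce exactly the paper's \eqref{eq:derivative Pi x}--\eqref{eq:derivative Pi xi}.

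Where your write-up genuinely improves on the paper is in the organization for general $s$. The paper handles $s=1$ and then asserts that ``the proof easily generalizes to $H^s_{\mathrm{loc}}$''; you actually supply the inductive structure, writing each $\partial^{\mu}_{x,\xi}\Pi_\pm$ as a finite sum $\sum \Phi_{\mu,\vec\nu}(v)\prod_\ell\partial_x^{\nu_\ell}A_{j_\ell}$ with $\sum_\ell|\nu_\ell|\le|\mu|$. You also pin down the one analytic fact that makes the Fubini reduction to $L^2_{\mathrm{loc}}(\mathbb R^3_x)$ legitimate: the kinetic factors $\Phi_{\mu,\vec\nu}$ are \emph{globally} bounded in $v$ because $P_0^2=\langle v\rangle^2 I_4$ balances every $\langle v\rangle^{-k}$ against compensating powers in the numerator. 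This is important precisely because for small $s$ (e.g.\ $s=1$) the potential $A$ need not be bounded on compact sets, so $v=\xi-A(x)$ is not confined to a compact region even when $(x,\xi)$ is; merely having $\Phi\in L^\infty_{\mathrm{loc}}(\mathbb R^3_v)$, which is all the paper invokes, would not suffice. The same mechanism gives you the global operator bound $\|\Pi_\pm\|_{\mathrm{op}}\le 1$ (since $S=P_0/\langle v\rangle$ is a Hermitian involution), which matches the $L^\infty(\mathbb R^3_x\times\mathbb R^3_\xi)$ claim in the lemma statement more closely than the $L^\infty_{\mathrm{loc}}$ the paper's proof asserts. One small caveat: when you say the Fubini step reduces to ``$L^2_{\mathrm{loc}}(\mathbb R^3_x)$ bounds on products of derivatives of $A$,'' those product bounds themselves require a Sobolev/Gagliardo--Nirenberg argument (for instance $H^1(\mathbb R^3)\hookrightarrow L^6$ to control $(\nabla A)^2$ when $s=2$), so the obstacle is not purely combinatorial. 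The paper glosses over this too, but a full proof should cite the relevant product estimates.
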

\subsection{Proof of  Theorem \ref{thm:main1}}\label{sec:proofmain1}
In this section we prove the first main theorem. We first prove a general result that is of independent interest and which states the existence of a Lagrange multiplier to the constraint $[P,W]=0$.

\begin{proposition}
\label{proposition general} 

Let $W^{\varepsilon} \in \mathcal{S}'(\mathbb{R}_t \times \mathbb{R}^d_x \times \mathbb{R}^d_{\xi}))\cap L^{\infty}(\mathbb{R}_t, L^2(\mathbb{R}^d_x \times \mathbb{R}^d_{\xi}))$ with values in $\mathbb{M}_m(\mathbb{C})$  be a solution in the sense of distributions of 
\begin{equation*}
    \partial_t W^{\varepsilon} = \frac{1}{i\varepsilon}[P,W^{\varepsilon}] + \frac{1}{2}(\{P,W^{\varepsilon}\}-\{W^{\varepsilon},P\}) + r^{\varepsilon},
\end{equation*}
where $P$ is a symbol of class $C^1(\mathbb{R}_t,H^s(\mathbb{R}^d_x\times \mathbb{R}^d_{\xi}))$ with $s>5/2$  and $r^{\varepsilon} \rightarrow 0 $ in $L^{\infty}(\mathbb{R}_t, \mathcal{S}'(\mathbb{R}^d_x \times \mathbb{R}^d_{\xi}))$. Let $W$ be a limit point of $W^{\varepsilon}$ in $L^{\infty}(\mathbb{R}_t, L^2( \mathbb{R}^d_x \times \mathbb{R}^d_{\xi}))$ as $\varepsilon \to 0$. Then, there exists 
\begin{equation*}
X\in H^{-1}_{\mathrm{loc}}(\mathbb{R}_t \times \mathbb{R}^d_x \times \mathbb{R}^d_{\xi},\mathbb{M}_m(\mathbb{C})),
\end{equation*}
such that
\begin{align}\label{WignerLim}
\partial_t W &= X + \frac12\left(\{P,W\}-\{W,P\}\right),
\\
 [P,W]&=0. \label{WignerLimComm}
\end{align}
in $H^{-1}_{\mathrm{loc}}(\mathbb{R}_t \times \mathbb{R}^d_x \times \mathbb{R}^d_{\xi})$. The matrix-valued distribution $X$ satisfies the following properties:
\begin{enumerate}[(\alph*)]
    \item \label{thm:X self adjoint} $X=X^*$,
    \item \label{thm:X orthogonal} for all $\Phi\in C^1(\mathbb{R}_t,H^s( \mathbb{R}^d_x \times \mathbb{R}^d_{\xi},\mathbb{M}_m(\mathbb C)))$, $s>5/2$,
\begin{equation*}
[P,\Phi]=0\text{ on }\mathbb{R}_t \times \mathbb{R}^d_x \times \mathbb{R}^d_{\xi}\implies \tr(\Phi X)=0\text{ in }H^{-1}_{\mathrm{loc}}(\mathbb{R}_t \times \mathbb{R}^d_x \times \mathbb{R}^d_{\xi}).
\end{equation*}
\end{enumerate}
\begin{proof}
It is known from Theorem \ref{thm:wigner measures} that (possibly after extracting a subsequence $\varepsilon_n\to 0$)
\begin{equation*}
W^{\varepsilon}(t,x,\xi)=W^{\varepsilon}(t,x,\xi)^*\to W(t,x,\xi)=W(t,x,\xi)^*\geq 0\quad\text{ in }\mathcal S'(\mathbb R\times\mathbb R^d_x\times\mathbb R^d_{\xi},\mathbb{M}_m(\mathbb{C})),
\end{equation*}
and from Lemma \ref{thm:limit remainder} that
\begin{equation*}
r^{\varepsilon}\to 0\quad\text{ in }L^{\infty}(\mathbb R_t, \mathcal S'(\mathbb R^d_x\times\mathbb R^d_{\xi},\mathbb{M}_m(\mathbb{C})),
\end{equation*}
as $\varepsilon\to 0$. Therefore
\begin{equation*}
\partial_tW^{\varepsilon}\to\partial_tW \quad {\text{ in }}  H^{-1}_{\mathrm{loc}}([-T,T], \mathcal{S}'(\mathbb R^d_x\times\mathbb R^d_{\xi} ,\mathbb{M}_m(\mathbb{C}))) \quad \text{ for all } T>0
\end{equation*}
as $\varepsilon\to 0$. Moreover, since $W^{\varepsilon} \in L^{\infty}(\mathbb{R}_t,L^2(\mathbb{R}^d_x\times\mathbb R^d_{\xi},\mathbb{M}_m(\mathbb{C})))$ and $P\in C^1(\mathbb{R}_t,H^s_{\mathrm{loc}}(\mathbb{R}^d_x\times \mathbb{R}^d_{\xi},\mathbb{M}_m(\mathbb{C})))$ with $s>5/2$ we obtain that
\begin{equation*}   
\begin{aligned}&\{P,W^{\varepsilon}\}=\partial_{\xi_j}P\partial_{x_j}W^{\varepsilon}-\partial_{x_j}P\partial_{\xi_j}W^{\varepsilon}\to\partial_{\xi_j}P\partial_{x_j}W-\partial_{x_j}P\partial_{\xi_j}W=\{P,W\},
\\
&\{W^{\varepsilon},P\}=\partial_{\xi_j}W^{\varepsilon}\partial_{x_j}P-\partial_{x_j}W^{\varepsilon}\partial_{\xi_j}P\to\partial_{\xi_j}W\partial_{x_j}P-\partial_{x_j}W\partial_{\xi_j}P=\{W,P\},
\end{aligned}
\end{equation*}
in $L^{\infty}(\mathbb{R}_t,H^{-1}_{\mathrm{loc}}(\mathbb{R}_x^d \times \mathbb{R}^d_{\xi}, \mathbb{M}_m(\mathbb{C})))$ as $\varepsilon\to 0$. Therefore, we obtain 
\begin{equation}
    W^{\varepsilon} \rightarrow W \quad {\text{ in }}  H^{-1}_{\mathrm{loc}}([-T,T] \times \mathbb R^d_x\times\mathbb R^d_{\xi} ,\mathbb{M}_m(\mathbb{C})) \quad \text{ for all } T>0.\label{eq:convergence W H-1}
\end{equation}
Besides
\begin{equation*}
\begin{aligned}
\tfrac1{i\varepsilon}[P(t,x,\xi),W^{\varepsilon}(t,x,\xi)]=&-r^{\varepsilon}(t,x,\xi)-\partial_tW^{\varepsilon}(t,x,\xi)
\\
&-\tfrac12\left(\{P(t,x,\xi),W^{\varepsilon}(t,x,\xi)\}-\{W^{\varepsilon}(t,x,\xi),P(t,x,\xi)\}\right)
\end{aligned}
\end{equation*}
converges to a limit in $\mathcal S'(\mathbb R_t\times\mathbb R^d_x\times\mathbb R^d_{\xi} ,\mathbb{M}_m(\mathbb{C}))$ as $\varepsilon\to 0$. 

Denoting by $X\in\mathcal S'(\mathbb R_t\times\mathbb R^d_x\times\mathbb R^d_{\xi} ,\mathbb{M}_m(\mathbb{C}))$ this limit, we find that
\begin{equation*}
\partial_tW+\frac12\left(\{P,W\}-\{W,P\}\right)+X=0\quad\text{ in }\mathcal S'(\mathbb R_t\times\mathbb R^d_x\times\mathbb R^d_{\xi} ,\mathbb{M}_m(\mathbb{C})).
\end{equation*}
Owing to \eqref{eq:convergence W H-1} we deduce that in fact
\begin{equation}
    X \in H^{-1}_{\mathrm{loc}}([-T,T]\times \mathbb R^d_x\times\mathbb R^d_{\xi} ,\mathbb{M}_m(\mathbb{C})) \quad \text{ for all } T>0.
\end{equation}
Besides
\begin{equation}\label{[P,Weps]to0}
\begin{aligned}
{}[P(t,x,\xi),W^{\varepsilon}(t,x,\xi)]=&-i\varepsilon r^{\varepsilon}(t,x,\xi)-i\varepsilon\partial_tW^{\varepsilon}(t,x,\xi)
\\
&-\tfrac12i\varepsilon\left(\{P(t,x,\xi),W^{\varepsilon}(t,x,\xi)\}-\{W^{\varepsilon}(t,x,\xi),P(t,x,\xi)\}\right)
\\
&\to 0
\end{aligned}
\end{equation}
in $\mathcal S'(\mathbb R_t\times\mathbb R^d_x\times\mathbb R^d_{\xi} ,\mathbb{M}_m(\mathbb{C}))$ as $\varepsilon\to 0$, while
\begin{equation*}
\begin{aligned}
{}[P(t,x,\xi),W^{\varepsilon}(t,x,\xi)]=&P(t,x,\xi)W^{\varepsilon}(t,x,\xi) - W^{\varepsilon}(t,x,\xi)P(t,x,\xi)
\\
&\to P(t,x,\xi)W(t,x,\xi)-W(t,x,\xi)P(t,x,\xi)
\\
&\,\,\quad=[P(t,x,\xi),W(t,x,\xi)]
\end{aligned}
\end{equation*}
in $ H^{-1}_{\mathrm{loc}}([-T,T]\times \mathbb R^d_x\times\mathbb R^d_{\xi} ,\mathbb{M}_m(\mathbb{C}))$ for all $ T>0$ as $\varepsilon\to 0$ since $P$ is of class $C^1_tH^s_{\mathrm{loc}}$, $s>5/2$.
By \eqref{[P,Weps]to0} and uniqueness of the limit in $\mathcal S'(\mathbb R_t\times\mathbb R^d_x\times\mathbb R^d_{\xi})$, we conclude that
\begin{equation*}
[P,W]=0\qquad\text{ in } H^{-1}_{\mathrm{loc}}([-T,T]\times \mathbb R^d_x\times\mathbb R^d_{\xi} ,\mathbb{M}_m(\mathbb{C})) \quad \text{ for all } T>0,
\end{equation*}
This proves the statement about the convergence. It remains to prove \ref{thm:X self adjoint} and \ref{thm:X orthogonal}. For the former, observe that
\begin{equation*}
    -i[P,W^{\varepsilon}]= \left(-i[P,W^{\varepsilon}]\right)^{\ast},  
\end{equation*}
since $P=P^{\ast}$ and $W^{\varepsilon} = (W^{\varepsilon})^{\ast}$ and thus we deduce $X=X^{\ast}$. For the latter we observe that for $\Phi$ satisfying $[P,\Phi]=0$,
\begin{equation}
    \tr_{\mathbb{C}^4}(\tfrac{1}{i\varepsilon}[P,W^{\varepsilon}]\Phi) = \tr_{\mathbb{C}^4}(\tfrac{1}{i\varepsilon}W^{\varepsilon}[P,\Phi])  = 0
\end{equation} 
due to the cyclicity of the trace. Moreover, we have that $\tr_{\mathbb{C}^4}(\tfrac{1}{i\varepsilon}[P,W^{\varepsilon}]\Phi)$ converges to $\tr_{\mathbb{C}^4}(-iX\Phi)$, which proves the claim.
\end{proof}
\end{proposition}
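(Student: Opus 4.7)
My approach is to pass to the limit in the Wigner equation term by term, treating the singular commutator $(i\varepsilon)^{-1}[P,W^\varepsilon]$ as an implicitly defined quantity whose limit $X$ must exist precisely because every other term in the equation has a limit. The constraint $[P,W]=0$ is then recovered by the complementary device of multiplying the equation through by $i\varepsilon$ and letting $\varepsilon \to 0$, which annihilates the commutator on the left-hand side.

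First I extract a subsequence along which $W^\varepsilon$ converges weak-$\ast$ in $L^\infty_t L^2_{x,\xi}$ to a limit $W$; such a subsequence exists by Banach--Alaoglu together with the uniform $L^2$ bound. The regularity assumption $s > 5/2$ is precisely what is needed so that Sobolev embedding makes $\nabla_{x,\xi} P$ act as a bounded multiplier, and hence each Poisson bracket $\{P,W^\varepsilon\}$, $\{W^\varepsilon,P\}$ -- a finite sum of products of such a multiplier with a distributional first derivative of $W^\varepsilon$ -- converges to the corresponding bracket with $W$ in $L^\infty_t H^{-1}_{\mathrm{loc}}$. Combined with $\partial_t W^\varepsilon \to \partial_t W$ in the sense of distributions and the hypothesis $r^\varepsilon \to 0$ in $L^\infty_t \mathcal S'$, this shows that every term of the Wigner equation except the singular commutator has a limit in $H^{-1}_{\mathrm{loc}}$.

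Rearranging the equation to isolate the commutator,
\begin{equation*}
\tfrac{1}{i\varepsilon}[P,W^\varepsilon] = \partial_t W^\varepsilon - \tfrac{1}{2}\left(\{P,W^\varepsilon\} - \{W^\varepsilon,P\}\right) - r^\varepsilon,
\end{equation*}
I define $X$ as the $H^{-1}_{\mathrm{loc}}$ limit of the right-hand side; this immediately yields equation \eqref{WignerLim}. To obtain \eqref{WignerLimComm}, I multiply the original equation instead by $i\varepsilon$: the right-hand side vanishes in $\mathcal S'$ since $\partial_t W^\varepsilon$ and the Poisson brackets are bounded while multiplied by the small factor $\varepsilon$, and $\varepsilon r^\varepsilon \to 0$ a fortiori. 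Hence $[P,W^\varepsilon] \to 0$ in $\mathcal S'$, while independently continuity of multiplication by $P$ gives $[P,W^\varepsilon] \to [P,W]$ in $H^{-1}_{\mathrm{loc}}$. Uniqueness of the distributional limit forces $[P,W]=0$.

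For property \ref{thm:X self adjoint}, self-adjointness of $P$ and $W^\varepsilon$ yields $[P,W^\varepsilon]^* = -[P,W^\varepsilon]$, hence $\left((i\varepsilon)^{-1}[P,W^\varepsilon]\right)^* = (i\varepsilon)^{-1}[P,W^\varepsilon]$, and self-adjointness transfers to $X$ in the limit. For property \ref{thm:X orthogonal}, if $[P,\Phi]=0$ then cyclicity of the trace gives
\begin{equation*}
\tr\!\left(\Phi \cdot \tfrac{1}{i\varepsilon}[P,W^\varepsilon]\right) = \tfrac{1}{i\varepsilon}\tr\!\left([\Phi,P]\, W^\varepsilon\right) = 0
\end{equation*}
identically for every $\varepsilon$, so passing to the limit against the admissible multiplier $\Phi$ (which acts boundedly on $H^{-1}_{\mathrm{loc}}$ by its assumed regularity) produces $\tr(\Phi X)=0$. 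I expect the main technical subtlety to be accounting carefully for which negative Sobolev space each convergence actually takes place in, since the commutator limit is a difference of $O(1/\varepsilon)$ quantities; this bookkeeping is entirely controlled by the regularity threshold $s > 5/2$ that guarantees $\nabla P$ is a bounded multiplier in the relevant spaces.
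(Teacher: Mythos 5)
Your proposal is correct and follows essentially the same route as the paper: extract a weak-$\ast$ limit of $W^{\varepsilon}$, pass to the limit in every non-singular term of the Wigner equation to define $X$ as the limit of $(i\varepsilon)^{-1}[P,W^{\varepsilon}]$, multiply through by $i\varepsilon$ to kill the commutator and deduce $[P,W]=0$, and then use self-adjointness of $P,W^{\varepsilon}$ and cyclicity of the trace for properties (a) and (b). The only cosmetic differences are that you invoke Banach--Alaoglu directly rather than the Wigner-measure theorem for the subsequence extraction, and your rearrangement of the equation has the signs written consistently with the statement.
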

In particular
\begin{equation*}
\mathrm{tr}(X)=\mathrm{tr}(PX)=\ldots=\mathrm{tr}(P^nX)=0\quad\text{ for all }n\ge 0.
\end{equation*}

Notice however that $X\in H^{-1}_{\mathrm{loc}}([-T,T]\times \mathbb R^d_x\times\mathbb R^d_{\xi} ,\mathbb{M}_m(\mathbb{C}))$, so that Proposition \ref{proposition general} does not imply in general that $W|_{t=0}$
is well-defined. More information is needed in order to formulate the Cauchy problem for the equation \eqref{WignerLim}-\eqref{WignerLimComm}. In the sequel, we shall discuss the special case of the Dirac equation on $\mathbb{R}^3$, i.e. we take
\begin{equation}
    P(t,x,\xi) = \alpha\cdot(\xi-A(t,x)) + \beta -A_0(t,x)I_4, \quad (t,x,\xi) \in \mathbb{R}_t \times \mathbb{R}^3_x \times \mathbb{R}^3_{\xi},
\end{equation}
with $A$ having bounded derivatives and we set $m=4$ in $\mathbb{M}_m(\mathbb{C})$. In that case we have the following Lemma.
\begin{lemma}
Assume that $A \in C^1(\mathbb{R}_t,H^s_{\mathrm{loc}}(\mathbb R^3_x))$ with $s>5/2$. Then
\begin{enumerate}[(\alph*)]
    \item \label{ProjectedX} One has
\begin{equation}
\Pi_+X\Pi_+=\Pi_-X\Pi_-=0 \quad\text{ in }H^{-1}_{\mathrm{loc}}([-T,T]\times\mathbb{R}_x^3\times\mathbb{R}_{\xi}^3, \mathbb{M}_4(\mathbb C)) \quad \text{ for all } T>0.
\end{equation}
\item There exists $Y=Y^*\in H^{-1}_{\mathrm{loc}}([-T,T]\times \mathbb R^d_x\times\mathbb R^d_{\xi} ,\mathbb{M}_4(\mathbb{C}))$ such that
\begin{equation}
X=-i[P,Y],\quad\text{ with }\Pi_+Y\Pi_+=\Pi_-Y\Pi_-=0. \label{eq:distributionY}
\end{equation}
\end{enumerate} 
\end{lemma}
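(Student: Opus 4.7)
\textbf{Part (a).} The guiding idea is that the identity $\Pi_\pm X \Pi_\pm = 0$ in fact already holds, exactly and for every $\varepsilon>0$, at the level of the commutator $\frac{1}{i\varepsilon}[P,W^{\varepsilon}]$; only the passage to the limit has to be justified in $H^{-1}_{\mathrm{loc}}$. By the spectral decomposition \eqref{eq:definition projections} one has $P\Pi_\pm = \Pi_\pm P = \lambda_\pm \Pi_\pm$, with $\lambda_\pm$ scalar. Hence for every $(t,x,\xi)$,
\begin{equation*}
\Pi_\pm[P,W^{\varepsilon}]\Pi_\pm = \lambda_\pm \Pi_\pm W^{\varepsilon}\Pi_\pm - \lambda_\pm\Pi_\pm W^{\varepsilon}\Pi_\pm = 0.
\end{equation*}
By Lemma \ref{thm:regularity projections} the projections $\Pi_\pm$ belong to $C^1(\mathbb R_t, L^\infty \cap H^s_{\mathrm{loc}})$ with $s>5/2$, so multiplication from the left and the right by $\Pi_\pm$ is a continuous operation on $H^{-1}_{\mathrm{loc}}(\mathbb{R}_t\times\mathbb R^3_x\times\mathbb R^3_{\xi},\mathbb M_4(\mathbb C))$. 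Since $\frac{1}{i\varepsilon}[P,W^{\varepsilon}]\to X$ in that space by the proof of Proposition \ref{proposition general}, passing to the limit yields $\Pi_\pm X \Pi_\pm = 0$.

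\textbf{Part (b).} Combining part (a) with $X = X^*$ gives the purely off-diagonal decomposition $X = \Pi_+ X \Pi_- + \Pi_- X \Pi_+$. The natural candidate for $Y$ is obtained by inverting $-i[P,\cdot]$ on off-diagonal blocks: since $\Pi_+(\cdot)\Pi_-$ is an eigenspace of $-i\mathrm{ad}_P$ with eigenvalue $-i(\lambda_+-\lambda_-) = -2i\langle\xi-A\rangle$, I define
\begin{equation*}
Y := \frac{i}{2\langle\xi-A\rangle}\bigl(\Pi_+ X\Pi_- - \Pi_- X\Pi_+\bigr).
\end{equation*}
The constraint $\Pi_\pm Y\Pi_\pm = 0$ is built in, while $Y=Y^*$ follows immediately from $X=X^*$ and the $\pm$-antisymmetric combination above. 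Using $P\Pi_\pm = \lambda_\pm\Pi_\pm$ one computes
\begin{equation*}
-i[P,Y] = \frac{\lambda_+-\lambda_-}{2\langle\xi-A\rangle}\bigl(\Pi_+ X\Pi_- + \Pi_- X\Pi_+\bigr) = \Pi_+ X\Pi_- + \Pi_- X\Pi_+ = X,
\end{equation*}
since $\lambda_+-\lambda_- = 2\langle\xi-A\rangle$ by \eqref{eq:eigenvalues time-dependent electromagnetic Dirac}.

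\textbf{Regularity and main obstacle.} What remains is to check that the above algebraic formula actually defines an element of $H^{-1}_{\mathrm{loc}}$. The scalar factor $\langle\xi-A\rangle^{-1}$ is bounded by $1$ and $C^1$ in $(t,x,\xi)$ under the hypothesis $A\in C^1(\mathbb R_t,H^s(\mathbb R^3_x))$ with $s>5/2$ (Sobolev embedding), and the $\Pi_\pm$ are multipliers on $H^{-1}_{\mathrm{loc}}$ by Lemma \ref{thm:regularity projections}; together these produce $Y\in H^{-1}_{\mathrm{loc}}([-T,T]\times\mathbb R^3_x\times\mathbb R^3_{\xi},\mathbb M_4(\mathbb C))$ for every $T>0$. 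The only real technical issue is this multiplier step: namely, that the nonlinear coefficients $\Pi_\pm$ and $\langle\xi-A\rangle^{-1}$ act continuously on the distributional object $X$. This is the hardest part, and it is precisely why the regularity $s>5/2$ has been imposed, making the coefficients Lipschitz with bounded first derivatives. Uniqueness of $Y$ subject to $\Pi_\pm Y\Pi_\pm = 0$ is automatic: the off-diagonal blocks are uniquely recovered from $X$ by solving the algebraic equation $-i[P,Y]=X$ block by block, since the factor $\lambda_+-\lambda_-\ge 2$ never vanishes.
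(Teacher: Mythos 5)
Your proof is correct, and part (b) is essentially identical to the paper's argument: both define $Y$ by inverting the adjoint action of $P$ on the off-diagonal blocks, arriving at $Y = \frac{i}{2\langle\xi-A\rangle}(\Pi_+X\Pi_- - \Pi_-X\Pi_+)$, and both check $Y = Y^*$ from $X = X^*$ and the regularity via the $W^{1,\infty}_{\mathrm{loc}}$-multiplier property of $\Pi_\pm$ and $\langle\xi-A\rangle^{-1}$.

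For part (a) you take a more direct route than the paper: you observe $\Pi_\pm\tfrac{1}{i\varepsilon}[P,W^\varepsilon]\Pi_\pm = 0$ exactly for each $\varepsilon$ from $P\Pi_\pm = \lambda_\pm\Pi_\pm$, and pass to the limit. The paper instead invokes the abstract property established in Proposition~\ref{proposition general}~\ref{thm:X orthogonal} (namely $[P,\Phi]=0\Rightarrow\tr(\Phi X)=0$), applied to $\Phi = \Pi_\pm\Psi\Pi_\pm$ with $\Psi\in\mathcal S$. Algebraically the two are equivalent (both reduce to the spectral decomposition of $P$), but your version bypasses the intermediate lemma and makes the $\varepsilon$-level cancellation transparent. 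One small point you gloss over: Proposition~\ref{proposition general} only establishes that $\tfrac1{i\varepsilon}[P,W^\varepsilon]$ converges in $\mathcal S'$; to legitimately multiply by the non-Schwartz coefficients $\Pi_\pm$ and conclude in $H^{-1}_{\mathrm{loc}}$, you need the complementary uniform bound of $\tfrac1{i\varepsilon}[P,W^\varepsilon]$ in $L^\infty_t H^{-1}_{\mathrm{loc}}$ (which holds, since $\partial_t W^\varepsilon$, the Poisson bracket terms, and $r^\varepsilon$ are all uniformly bounded there by Lemma~\ref{thm:limit remainder}\ref{eq:convergence remainder b}), so that the convergence upgrades to weak-$*$ in $L^\infty_t H^{-1}_{\mathrm{loc}}$ and then commutes with the $W^{1,\infty}_{\mathrm{loc}}$-multipliers. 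The paper sidesteps this by only ever testing against elements of $C^1_t H^1_{x,\xi}$, but your more direct phrasing is fine once this step is made explicit.
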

\begin{proof}
Let $\Psi\in \mathcal{S}( \mathbb{R}_t \times \mathbb{R}^3_x \times \mathbb{R}^3_{\xi},\mathbb{M}_4(\mathbb C))$, and set
\begin{equation*}
\Phi:=\Pi_{\pm}\Psi\Pi_{\pm}\in C^1(\mathbb{R}_t,H^1(\mathbb{R}_x^3\times\mathbb{R}_{\xi}^3,\mathbb{M}_4(\mathbb C))).
\end{equation*}
Indeed, since the eigenprojections of $P$ satisfy $\Pi_\pm\in C^1(\mathbb{R}_t,H^s_{\mathrm{loc}}(\mathbb{R}_x^3\times\mathbb{R}_{\xi}^3,\mathbb{M}_4(\mathbb C)))$, $s>5/2$, we have (denoting $z=(x,\xi)$),
\begin{align*}
    \nabla_z \Phi = (\nabla_z\Pi_{\pm}) \Psi \Pi_{\pm} + \Pi_{\pm} \Psi (\nabla_z\Pi_{\pm}) + \Pi_{\pm} (\nabla_z \Psi) \Pi_{\pm}.
\end{align*}
By Lemma \ref{thm:regularity projections} we obtain that
\begin{equation}
    \nabla_z \Pi_{\pm}  \in L^{\infty}_{\mathrm{loc}} \quad \Rightarrow \quad \nabla_z \Phi \in L^2.
\end{equation}
Since $\Pi_+\Pi_-=\Pi_-\Pi_+=0$ while $\Pi_{\pm}^2=\Pi_{\pm}$, one has obviously
\begin{equation*}
\begin{aligned}
{}[P,\Phi]=[\lambda_+\Pi_+ + \lambda_- \Pi_-,\Phi]=&\langle\xi-A\rangle[\Pi_+-\Pi_-,\Pi_{\pm}\Psi\Pi_{\pm}]
\\
=&\pm\langle\xi-A\rangle(\Pi_{\pm}^2\Psi\Pi_{\pm}-\Pi_{\pm}\Psi\Pi_{\pm}^2)=0.
\end{aligned}
\end{equation*}
By Proposition \ref{proposition general}
\begin{equation*}
\tr(\Phi X)=\tr(\Pi_{\pm}\Psi\Pi_{\pm}X)=\tr(\Psi\Pi_{\pm}X\Pi_{\pm})=0
\end{equation*}
in $H^{-1}_{\mathrm{loc}}([-T,T]\times\mathbb{R}_x^3\times\mathbb{R}_{\xi}^3)$, and since this holds for all 
$\Psi\in\mathcal S(\mathbb{R}_t\times\mathbb{R}_x^3\times\mathbb{R}_{\xi}^3, \mathbb{M}_4(\mathbb C))$, it implies that
\begin{equation*}
\Pi_{\pm}X\Pi_{\pm}=0\quad\text{ in }H^{-1}_{\mathrm{loc}}([-T,T]\times\mathbb{R}_x^3\times\mathbb{R}_{\xi}^3, \mathbb{M}_4(\mathbb C)) \quad \text{ for all } T>0.
\end{equation*}

Now seek $Y \in H^{-1}_{\mathrm{loc}}([T,T]\times \mathbb{R}^3_x \times \mathbb{R}^3_{\xi},\mathbb{M}_4(\mathbb{C}))$ such that
\begin{equation*}
\Pi_{\pm}Y\Pi_{\pm} \in H^{-1}_{\mathrm{loc}}([T,T]\times \mathbb{R}^3_x \times \mathbb{R}^3_{\xi},\mathbb{M}_4(\mathbb{C})), \quad \Pi_{\pm}Y\Pi_{\pm} =0, \quad \text{ for all } T>0.
\end{equation*}
Hence
\begin{equation*}
Y=(\Pi_++\Pi_-)Y(\Pi_++\Pi_-)=\Pi_-Y\Pi_++\Pi_+Y\Pi_-,
\end{equation*}
so that
\begin{equation*}
\begin{aligned}
-i[P,Y] = -i[\lambda_+\Pi_++\lambda_- \Pi_-,Y] =&-i\langle\xi-A\rangle[\Pi_+-\Pi_-,\Pi_-Y\Pi_++\Pi_+Y\Pi_-]
\\
=&-i\langle\xi-A\rangle(-\Pi_-^2Y\Pi_+-\Pi_-Y\Pi_+^2+\Pi_+^2Y\Pi_-+\Pi_+Y\Pi_-^2)
\\
=&2i\langle\xi-A\rangle(\Pi_-Y\Pi_+-\Pi_+Y\Pi_-).
\end{aligned}
\end{equation*}
By \ref{ProjectedX}, one has
\begin{equation*}
X=\Pi_-X\Pi_++\Pi_+X\Pi_-,
\end{equation*}
by analogy with the case of $Y$ as explained above, so that
\begin{equation*}
-i[P,Y]=X\iff 2i\langle\xi-A\rangle(\Pi_-Y\Pi_+-\Pi_+Y\Pi_-)=\Pi_-X\Pi_++\Pi_+X\Pi_-.
\end{equation*}
This last equation is solved by setting
\begin{equation*}
\Pi_{\mp}Y\Pi_{\pm}=\frac{\Pi_{\mp}X\Pi_{\pm}}{2i\langle\xi-A\rangle},
\end{equation*}
i.e.
\begin{equation*}
Y:=\frac{\Pi_-X\Pi_+-\Pi_+X\Pi_-}{2i\langle\xi-A\rangle}\in H^{-1}_{\mathrm{loc}}([T,T]\times \mathbb{R}^3_x \times \mathbb{R}^3_{\xi},\mathbb{M}_4(\mathbb{C})) \quad \text{ for all } T>0.
\end{equation*}
This expression is well-defined  since, for $s>5/2$,
\begin{equation*}
\frac{1}{\langle\xi-A\rangle}, \,\Pi_\pm\in C^1(\mathbb{R}_t,W^{1,\infty}_{\mathrm{loc}}(\mathbb{R}_x^3\times\mathbb{R}_{\xi}^3, \mathbb{M}_4(\mathbb C)))).
\end{equation*}
 Observe that the definition of $Y$ above implies that
\begin{equation*}
Y=Y^*,\quad\text{ since }X=X^*.
\end{equation*}
This completes the proof.
\end{proof}
Therefore we can rewrite \eqref{WignerLim}-\eqref{WignerLimComm} as
\begin{align}
\partial_t W &= -i[P,Y] + \frac12\left(\{P,W\}-\{W,P\}\right), \label{WignerLimY}
\\
 [P,W]&=0. \label{WignerLimCommY}
\end{align}
In order to discuss the Cauchy problem for \eqref{WignerLimY}-\eqref{WignerLimCommY} we need the following auxilliary lemma.
\begin{lemma}
    Let $S=S^*\in \mathbb{M}_4(\mathbb C)$ satisfy $S^2=I_4$, with eigenvalues $\pm 1$ and associated eigenprojections
\begin{equation*}
\Pi_\pm:=\tfrac12(I_4\pm S)=\Pi_\pm^*=\Pi_\pm^2.
\end{equation*}
Let $A\in M_4(\mathbb C)$; then
\begin{equation*}
\Pi_+A\Pi_+=\Pi_-A\Pi_-=0\implies[S,[S,A]]=4A.
\end{equation*}
\begin{proof}As explained above
\begin{equation*}
A=(\Pi_++\Pi_-)A(\Pi_++\Pi_-)=\Pi_-A\Pi_++\Pi_+A\Pi_-,
\end{equation*}
so that
\begin{equation*}
\begin{aligned}
{}[S,[S,A]]=&S^2A+AS^2-2SAS=2(A-SAS)
\\
=&2(\Pi_-A\Pi_++\Pi_+A\Pi_--(\Pi_+-\Pi_-)A(\Pi_+-\Pi_-))
\\
=&4(\Pi_-A\Pi_++\Pi_+A\Pi_-)=4A.
\end{aligned}
\end{equation*}
This completes the proof.
\end{proof}
\end{lemma}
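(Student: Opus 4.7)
The plan is to exploit the resolution of the identity $I_4=\Pi_++\Pi_-$ to put $A$ in off-diagonal block form with respect to the eigenspaces of $S$, and then to evaluate the double commutator by a direct algebraic computation using $S=\Pi_+-\Pi_-$ and $S^2=I_4$. Since the whole statement is a finite-dimensional algebraic identity in $\mathbb{M}_4(\mathbb C)$, no analytic input is needed; the only content is bookkeeping with the four blocks of $A$.

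First, I would write $A=(\Pi_++\Pi_-)A(\Pi_++\Pi_-)$ and expand to obtain the four pieces $\Pi_\epsilon A\Pi_{\epsilon'}$ for $\epsilon,\epsilon'\in\{+,-\}$. The assumption $\Pi_+A\Pi_+=\Pi_-A\Pi_-=0$ eliminates the two ``diagonal'' pieces, leaving the off-diagonal decomposition $A=\Pi_-A\Pi_++\Pi_+A\Pi_-$. This step only uses that $\Pi_++\Pi_-=I_4$, which follows immediately from the definition $\Pi_\pm=\tfrac12(I_4\pm S)$.

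Next, I would expand the double commutator and use $S^2=I_4$ to get $[S,[S,A]]=S^2A-2SAS+AS^2=2(A-SAS)$, reducing the claim to the identity $SAS=-A$. To verify this, I would substitute $S=\Pi_+-\Pi_-$ and use $\Pi_\pm^2=\Pi_\pm$ together with $\Pi_+\Pi_-=\Pi_-\Pi_+=0$ (both consequences of $S^2=I_4$). Each of the two surviving cross terms of $A$ then picks up one factor of $+1$ and one factor of $-1$ when conjugated by $S$, yielding $SAS=-(\Pi_+A\Pi_-+\Pi_-A\Pi_+)=-A$. Substituting back gives $[S,[S,A]]=2(A-(-A))=4A$, as required. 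The only potential pitfall is sign tracking in the four cross terms of $SAS$, but this is routine once the off-diagonal decomposition of $A$ is in place.
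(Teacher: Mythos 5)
The proposal is correct and takes essentially the same route as the paper: expand $[S,[S,A]]=2(A-SAS)$ using $S^2=I_4$, then use the off-diagonal decomposition $A=\Pi_-A\Pi_++\Pi_+A\Pi_-$ forced by the hypothesis to compute $SAS=-A$. Your intermediate reformulation ``$SAS=-A$'' is just a slightly more explicit phrasing of the paper's last two lines.
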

In the next lemma we characterize the distribution $Y$ and show that it is sensible to speak of $W$ at time $t=0$. Here we exploit the analogy with the Navier-Stokes equation in Section \ref{section analogy navier stokes} in order to express $Y$ in terms of $P$ and $W$, similarly to how the pressure $p$ is expressed in terms of $u$ and its derivatives.
\begin{lemma}
\label{thm:characterization Y}
Assume that $A\in C^1(\mathbb R_t,H^s(\mathbb R^3_x))$, $s >5/2$, and let  $W=W^*\ge 0$ belonging to $ L^\infty(\mathbb R_t;L^2(\mathbb R^3_x \times \mathbb{R}^3_{\xi},\mathbb{M}_4(\mathbb C)))$ satisfy \eqref{WignerLim}-\eqref{WignerLimComm}. Then
\begin{enumerate}[(\alph*)]
    \item \label{FormulaY}The matrix-valued distribution $Y$ \eqref{eq:distributionY} satisfying $\Pi_+Y\Pi_+=\Pi_-Y\Pi_-=0$ is given by the formula
\begin{equation}
Y=\frac{i\left([\partial_tP,W]-\tfrac12[P,\{P,W\}-\{W,P\}]\right)}{4\langle\xi-A\rangle^2}. \label{eq:Y}
\end{equation}
\item \label{YBounded}For each $\chi\equiv\chi(x,\xi)\in C^\infty_c(\mathbb R^3_x\times\mathbb R^3_\xi)$ and each $T>0$, the matrix-valued distribution $Y$ \eqref{eq:distributionY} satisfies
\begin{equation*}
\chi Y\in L^\infty([-T,T];H^{-1}(\mathbb R^3_x\times\mathbb R^3_\xi,\mathbb{M}_4(\mathbb C))).
\end{equation*}
\item \label{dtWBounded} Similarly,
\begin{equation*}
\partial_t(\chi W)\in L^\infty([-T,T];H^{-1}(\mathbb R^3_x\times\mathbb R^3_\xi,\mathbb{M}_4(\mathbb C))).
\end{equation*}
\end{enumerate}
In particular, for each $\chi\equiv\chi(x)\in C^\infty_c(\mathbb R^3_x\times\mathbb R^3_\xi)$, the matrix-valued function
\begin{equation*}
(t,x,\xi)\mapsto\chi(x,\xi)W(t,x,\xi)
\end{equation*}
belongs to $C(\mathbb R_t;H^{-1}(\mathbb R^3_x\times\mathbb R^3_\xi,\mathbb{M}_4(\mathbb C)))$, so that the trace of $W|_{t=0}$ is well-defined as an element of $H^{-1}_{\mathrm{loc}}(\mathbb R^3_x\times\mathbb R^3_\xi,\mathbb{M}_4(\mathbb C))$.
\begin{proof}
\underline{Proof of \ref{FormulaY}:} Let us apply $-i[P,\cdot]$ to both sides of equation \eqref{WignerLimY}:
\begin{equation*}
\begin{aligned}
{}4\langle\xi-A\rangle^2Y=[P,[P,Y]]=&-i[P,\partial_tW]-\tfrac12i[P,\{P,W\}]+\tfrac12i[P,\{W,P\}]
\\
=&i[\partial_tP,W]-\tfrac12i[P,\{P,W\}]+\tfrac12i[P,\{W,P\}],
\end{aligned}
\end{equation*}
since
\begin{equation*}
[P,W]=0\implies[P,\partial_tW]=\partial_t[P,W]-[\partial_tP,W]=-[\partial_tP,W].
\end{equation*}
Therefore
\begin{equation*}
Y=\frac{i\left([\partial_tP,W]-\tfrac12[P,\{P,W\}-\{W,P\}]\right)}{4\langle\xi-A\rangle^2}.
\end{equation*}
\underline{Proof of \ref{YBounded}:} We have that
\begin{equation*}
\partial_tP=-\alpha \cdot(\partial_t A) - \partial_t A_0 I_4,
\end{equation*}
while
\begin{equation*}
\begin{aligned}
\{P,W\}-\{W,P\}
=&\sum_{j=1}^3(\partial_{x_j}+\nabla_xA_j\cdot\nabla_\xi)(\alpha_jW+W\alpha_j)
- 2\sum_{j=1}^3 \partial_{x_j} A_0 \partial_{\xi_j} W.
\end{aligned}
\end{equation*}
Therefore, assuming that $W\in L^\infty(\mathbb R;L^2(\mathbb R^3_x\times\mathbb R^3_\xi,\mathbb{M}_4(\mathbb C)))$ implies that
\begin{equation}\label{RegPW-WP}
\begin{aligned}
\{P,W\}-\{W,P\}
\in L^\infty([-T,T];H^{-1}_{\mathrm{loc}}(\mathbb R^3_x\times\mathbb R^3_\xi;\mathbb{M}_4(\mathbb C)))
\end{aligned}
\end{equation}
as in Proposition \ref{proposition general}.
By the same token
\begin{equation*}
\begin{aligned}
{}[\partial_tP,W]=&-\sum_{j=1}^3\partial_tA_j[\alpha_j,W]\in L^\infty([-T,T];L^2(\mathbb R^3_x\times\mathbb R^3_\xi;\mathbb{M}_4(\mathbb C))),
\\
{}[P,\{P,W\}-\{W,P\}]=&\sum_{j=1}^3(\partial_{x_j}+\nabla_xA_j\cdot\nabla_\xi)[\beta,\alpha_jW+W\alpha_j]
\\
&+\sum_{j,k=1}^3(\xi_k-A_k)(\partial_{x_j}+\nabla_xA_j\cdot\nabla_\xi)[\alpha_k,\alpha_jW+W\alpha_j]
\\
&\qquad\in L^\infty([-T,T];H^{-1}_{\mathrm{loc}}(\mathbb R^3_x\times\mathbb R^3_\xi;\mathbb{M}_4(\mathbb C))).
\end{aligned}
\end{equation*}
Therefore
\begin{equation*}
\chi Y=\frac{i\chi\left([\partial_tP,W]-\tfrac12[P,\{P,W\}-\{W,P\}]\right)}{4\langle\xi-A\rangle^2}
\in L^\infty(\mathbb R_t;H^{-1}(\mathbb R^3_x\times\mathbb R^3_\xi,\mathbb{M}_4(\mathbb C))).
\end{equation*}
\underline{Proof of \ref{dtWBounded}} By \eqref{WignerLimY} and \eqref{RegPW-WP}, one computes
\begin{equation*}
\begin{aligned}
\partial_t(\chi W)=&-\frac12\chi\left(\{P,W\}-\{W,P\}\right)+i[P,\chi Y]
\\
=&-\frac12\sum_{j=1}^3(\partial_{x_j}+\nabla_xA_j\cdot\nabla_\xi)(\alpha_j\chi W+\chi W\alpha_j)
\\
&+\frac12\sum_{j=1}^3(\alpha_jW+W\alpha_j)(\partial_{x_j}\chi+\nabla_xA_j\cdot\nabla_\xi\chi)
\\
&+i[P,\chi Y]\in L^\infty(\mathbb R_t;H^{-1}(\mathbb R^3_x\times\mathbb R^3_\xi,\mathbb{M}_4(\mathbb C))),
\end{aligned}
\end{equation*}
for each $\chi\in C^\infty_c(\mathbb R^3_x\times\mathbb R^3_\xi)$.
\end{proof}
\end{lemma}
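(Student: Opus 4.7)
The plan is to double-commute the limit equation \eqref{WignerLimY} with $P$ in order to derive the explicit formula \eqref{eq:Y}, and then to read off the desired distributional regularity of $Y$ and $\partial_t(\chi W)$ directly from this formula combined with the limit equation itself.

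For part \ref{FormulaY}, the key algebraic identity is that whenever a matrix-valued distribution $Z$ satisfies $\Pi_+Z\Pi_+=\Pi_-Z\Pi_-=0$, the auxiliary lemma, applied to $S=P_0/\langle\xi-A\rangle$ (which squares to $I_4$ and has eigenprojections $\Pi_\pm$), gives $[S,[S,Z]]=4Z$. Since $P=\langle\xi-A\rangle S-A_0 I_4$ and the scalar piece $-A_0 I_4$ commutes with everything, this upgrades to $[P,[P,Z]]=4\langle\xi-A\rangle^2 Z$. Apply $[P,\cdot\,]$ to both sides of \eqref{WignerLimY}. On the left, using the constraint \eqref{WignerLimComm} and the Leibniz rule gives $[P,\partial_t W]=\partial_t[P,W]-[\partial_t P,W]=-[\partial_t P,W]$. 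On the right, $[P,-i[P,Y]]=-i[P,[P,Y]]=-4i\langle\xi-A\rangle^2 Y$ by the displayed identity (applicable since $Y$ was constructed to satisfy $\Pi_\pm Y\Pi_\pm=0$), while $[P,\tfrac12(\{P,W\}-\{W,P\})]$ survives as is. Solving the resulting linear scalar equation in $Y$ and pointwise dividing by the positive factor $4\langle\xi-A\rangle^2$ then yields \eqref{eq:Y}.

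For part \ref{YBounded}, fix $\chi\in C_c^\infty(\mathbb R^3_x\times\mathbb R^3_\xi)$ and bound each term of \eqref{eq:Y}. Since $A\in C^1(\mathbb R_t,H^s(\mathbb R^3_x))$ with $s>5/2$, Sobolev embedding gives $A,\nabla_x A,\partial_t A\in C(\mathbb R_t,L^\infty_{\mathrm{loc}}(\mathbb R^3_x))$, and hence $\partial_t P$ is locally bounded in $(t,x,\xi)$; combined with $W\in L^\infty_t L^2$, this shows $[\partial_t P,W]\in L^\infty_t L^2_{\mathrm{loc}}$. The expression $\{P,W\}-\{W,P\}$ is a first-order linear differential operator in $W$ with coefficients locally bounded on the support of $\chi$, so it lies in $L^\infty_t H^{-1}_{\mathrm{loc}}$, and multiplication by $P$ (which is locally $W^{1,\infty}$ on the support of $\chi$) preserves that class, so that $\chi[P,\{P,W\}-\{W,P\}]\in L^\infty_t H^{-1}$. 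Finally $\langle\xi-A\rangle^{-2}$ is smooth and bounded above by $1$, so dividing retains the regularity and $\chi Y\in L^\infty_t H^{-1}$.

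For part \ref{dtWBounded}, multiply \eqref{WignerLimY} by $\chi$. The Poisson-bracket term is controlled as in part \ref{YBounded}. For the commutator term $\chi[P,Y]$, write it as $[P,\chi Y]-[P,\chi]Y$; on the support of $\chi$ the symbol $P$ is in $W^{1,\infty}$, so multiplication by $P$ maps $H^{-1}$ to $H^{-1}$ and maps $L^2$ to $L^2$ locally. By \ref{YBounded} this gives $\partial_t(\chi W)=\chi\partial_t W\in L^\infty_t H^{-1}$. Combined with $\chi W\in L^\infty_t L^2\hookrightarrow L^\infty_t H^{-1}$, the standard embedding $W^{1,\infty}_t H^{-1}\hookrightarrow C_t H^{-1}$ produces a continuous-in-time representative of $\chi W$, so the trace at $t=0$ is well-defined in $H^{-1}_{\mathrm{loc}}$. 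The main obstacle is to do the sign bookkeeping in the double commutator of \ref{FormulaY} cleanly, and to verify that the rather low regularity imposed on $A_0$ in Theorem \ref{thm:main1} is in fact enough for $\partial_t A_0$-type contributions appearing in $\{P,W\}-\{W,P\}$ and in \eqref{eq:Y}; if not, one interprets these contributions distributionally, noting that only their pairing with the cutoff $\chi$ matters.
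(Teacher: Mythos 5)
Your proposal follows essentially the same route as the paper: apply $[P,\cdot\,]$ (equivalently $-i[P,\cdot\,]$) to the limit equation, invoke the auxiliary algebraic lemma via $P=\langle\xi-A\rangle S-A_0 I_4$ to get $[P,[P,Y]]=4\langle\xi-A\rangle^2 Y$, use $[P,\partial_tW]=-[\partial_tP,W]$ from the constraint, solve for $Y$, and then read off the $L^\infty_t H^{-1}_{\mathrm{loc}}$ regularity of $\chi Y$ and $\partial_t(\chi W)$ directly from the explicit formula and the limit equation. One remark on bookkeeping: carrying out your commutation carefully gives $4i\langle\xi-A\rangle^2Y=[\partial_tP,W]+\tfrac12[P,\{P,W\}-\{W,P\}]$, i.e.\ $Y=\dfrac{-i\left([\partial_tP,W]+\tfrac12[P,\{P,W\}-\{W,P\}]\right)}{4\langle\xi-A\rangle^2}$, which differs from the displayed \eqref{eq:Y} by a sign in the first numerator term — the paper's own two-line chain contains the same sign slip — so do not claim the algebra ``yields \eqref{eq:Y}'' literally without flagging this; also note that since $\chi$ is scalar, $[P,\chi]=0$, so the term $[P,\chi]Y$ in your treatment of part~(c) simply vanishes.
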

Following Lemma \ref{thm:characterization Y}, we can now pose the Cauchy problem
\begin{align}
\label{WignerLim3}
\partial_tW &=-i[P,Y]+\frac12\left(\{P,W\}-\{W,P\}\right),
\\
[P,W]&=0,\quad\Pi_+Y\Pi_+=\Pi_-Y\Pi_-=0,
\\
W|_{t=0}&=W^\mathrm{in}, \label{WignerLim3Data}
\end{align}
with $Y$ given by \eqref{eq:Y}. It remains to identify the initial data $W^\mathrm{in}$. By assumption \eqref{eq:convergence initial data} in Theorem \ref{thm:main1} we have that
\begin{equation*}
W_{\pm}^{\mathrm{in},\varepsilon} := \Pi_{\pm} W^{\mathrm{in},\varepsilon} \Pi_{\pm} \underset{\varepsilon \rightarrow 0}{\longrightarrow} \Pi_{\pm} W^{\mathrm{in}} \Pi_{\pm} =: W^{\mathrm{in}}_{\pm}, 
\end{equation*}
weakly in $L^2(\mathbb{R}^3_x \times \mathbb{R}^3_{\xi})$. Moreover, by assumption \eqref{eq:assumption lambda} and Plancherel's theorem we have that
\begin{equation}
    \|W^{\varepsilon} (t) \|_{L^2_{x,\xi}}^2  = \|W_{\mathrm{in}}^{\varepsilon}\|_{L^2_{x,\xi}}^2 \leq C.
\end{equation}
\begin{lemma}
\label{thm:initial data}
The solution $W \in L^{\infty}(\mathbb{R}_t, L^2(\mathbb{R}^3_x\times \mathbb{R}^3_{\xi}))$ of \eqref{WignerLim3} satisfies
\begin{equation*}
W=\Pi_+W\Pi_++\Pi_-W\Pi_-,
\end{equation*}
Moreover, let $W^{\mathrm{in}}_{\pm}$ be the weak limit of $W_{\pm}^{\mathrm{in},\varepsilon}$ in $L^2(\mathbb{R}^3_x\times \mathbb{R}^3_{\xi})$. Then the initial data \eqref{WignerLim3Data} for \eqref{WignerLim3} is given by
\begin{equation*}
W^{\mathrm{in}}:=W_+^{\mathrm{in}}+W_-^{\mathrm{in}}.
\end{equation*}

\begin{proof} First we observe that
\begin{equation*}
\Pi_+W\Pi_-=\Pi_-W\Pi_+=0\quad\text{ so that }W=\Pi_+W\Pi_++\Pi_-W\Pi_-.
\end{equation*}
Indeed
\begin{equation*}
\begin{aligned}
0=\Pi_+[P,W]\Pi_- &= \Pi_+[\lambda_+ \Pi_+ + \lambda_- \Pi_-,W]\Pi_-=(\lambda_+ - \lambda_-)\Pi_+W\Pi_-
\\ &= 2\langle\xi-A\rangle\Pi_+W\Pi_-.
\end{aligned}
\end{equation*}
The proof for $\Pi_- W \Pi_+$ is similar.
Compressing both sides of \eqref{eq:Wigner matrix equation_2} by $\Pi_\pm$ shows that
\begin{equation*}
\begin{aligned}
\partial_t(\Pi_\pm W^{\varepsilon}\Pi_\pm) = \frac12\Pi_\pm\left(\{P,W^{\varepsilon}\}-\{W^{\varepsilon},P\}\right)\Pi_\pm + \frac1{i{\varepsilon}}\Pi_\pm[P,W^{\varepsilon}]\Pi_\pm + \Pi_\pm r^{\varepsilon}\Pi_\pm
\\
-(\partial_t\Pi_\pm)W^{\varepsilon}\Pi_\pm -\Pi_\pm W^{\varepsilon}(\partial_t\Pi_\pm)&.
\end{aligned}
\end{equation*}
As above,
\begin{equation*}
\begin{aligned}
\Pi_\pm[P,W^{\varepsilon}]\Pi_\pm= \Pi_\pm[\lambda_+ \Pi_+ + \lambda_- \Pi_-,W^{\varepsilon}]\Pi_\pm = \lambda_{\pm}\Pi_{\pm}W^{\varepsilon} \Pi_{\pm}-\lambda_{\pm}\Pi_{\pm}W^{\varepsilon} \Pi_{\pm}
=0.
\end{aligned}
\end{equation*}
Next we use assumption \eqref{eq:assumption lambda}, i.e.
\begin{equation*}
\sup_{{\varepsilon}>0}\|W^{\varepsilon}\|_{L^\infty([-T,T],L^2_{x,\xi})}<+\infty.
\end{equation*}
We have that
\begin{equation*}
\begin{aligned}
\sup_{{\varepsilon}>0}\|(\partial_t\Pi_\pm)W^{\varepsilon}\Pi_\pm\|_{L^\infty([-T,T],L^2_{x,\xi})}<+\infty,
\\
\sup_{{\varepsilon}>0}\|\Pi_\pm W^{\varepsilon}(\partial_t\Pi_\pm)\|_{L^\infty([-T,T],L^2_{x,\xi})}<+\infty,
\end{aligned}
\end{equation*}
for all $T>0$ since $\Pi_\pm\in C^1(\mathbb{R}_t,H^s(\mathbb{R}_x^3\times\mathbb{R}_{\xi}^3))$ for $s>5/2$. Proceeding as in \eqref{RegPW-WP} shows that
\begin{equation*}
\chi(x,\xi)\left(\{P(t,x,\xi),W^{\varepsilon}(t,x,\xi)\}-\{W^{\varepsilon}(t,x,\xi),P(t,x,\xi)\}\right)
\end{equation*}
is bounded in $L^{\infty}([-T,T] ,H^{-1}(\mathbb R^3_x\times\mathbb R^3_\xi;\mathbb{M}_4(\mathbb C)))$, and therefore
\begin{equation*}
\sup_{{\varepsilon}>0}\|\chi\Pi_\pm\left(\{P,W^{\varepsilon}\}\!-\!\{W^{\varepsilon},P\}\right)\Pi_\pm\|_{L^{\infty}([-T,T],H^{-1} (\mathbb{R}^6_{x,\xi}))}<+\infty,
\end{equation*}
for all $T>0$, since $\Pi_\pm\in C^1(\mathbb{R}_t,H^s(\mathbb{R}_x^3\times\mathbb{R}_{\xi}^3))$, $s>5/2$, and $\chi\in C^\infty_c(\mathbb{R}_x^3\times\mathbb{R}_{\xi}^3)$.
Finally, by Lemma \ref{thm:limit remainder},
\begin{equation*}
\|\chi\Pi_\pm r^{\varepsilon}\Pi_\pm\|_{L^{\infty}([-T,T],H^{-1}(B_R(0)^2))}=O({\varepsilon}),
\end{equation*}
since $\Pi_\pm\in C^1(\mathbb{R}_t,H^s(\mathbb{R}_x^3\times\mathbb{R}_{\xi}^3))$ for $s>5/2$ and $\chi\in C^\infty_c(\mathbb R_x^3\times\mathbb R_{\xi}^3)$.

Hence
\begin{equation*}
\sup_{0<{\varepsilon}<1}\|\partial_t(\chi\Pi_\pm W\Pi_\pm )\|_{L^{\infty}([-T,T],H^{-1}(\mathbb{R}^6_{x,\xi}))}<+\infty,
\end{equation*}
so that (by Ascoli's theorem)
\begin{equation*}
\Pi_\pm W^{\varepsilon}(t,\cdot,\cdot)\Pi_\pm \to \Pi_\pm W(t,\cdot,\cdot)\Pi_\pm \text{ in }H^{-1}_{\mathrm{loc}}(\mathbb R^3\times\mathbb R^3)\text{ uniformly in }t\in[-T,T].
\end{equation*}
\end{proof}
\end{lemma}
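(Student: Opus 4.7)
The plan is to handle the two assertions separately. For the decomposition $W = \Pi_+ W \Pi_+ + \Pi_- W \Pi_-$, I would exploit the constraint $[P,W]=0$ established in Proposition \ref{proposition general}. Writing $P = \lambda_+ \Pi_+ + \lambda_- \Pi_-$ and compressing the identity $[P,W]=0$ by $\Pi_+$ on the left and $\Pi_-$ on the right yields
\[
0 = \Pi_+[P,W]\Pi_- = (\lambda_+ - \lambda_-)\Pi_+ W \Pi_- = 2\langle \xi - A\rangle\, \Pi_+ W \Pi_-,
\]
and since $\langle\xi-A\rangle \geq 1$ is pointwise invertible, this forces $\Pi_+ W \Pi_- = 0$. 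The symmetric compression gives $\Pi_- W \Pi_+ = 0$, so that expanding $W = (\Pi_+ + \Pi_-)W(\Pi_+ + \Pi_-)$ via $\Pi_+ + \Pi_- = I_4$ yields the desired decomposition.

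To identify the initial data, I would sandwich the Dirac-Wigner equation \eqref{eq:Wigner matrix equation_2} between $\Pi_\pm$, using the Leibniz rule to account for the time-dependence of the projections, to obtain
\[
\partial_t(\Pi_\pm W^\varepsilon \Pi_\pm) = \tfrac{1}{i\varepsilon} \Pi_\pm [P,W^\varepsilon] \Pi_\pm + \tfrac12 \Pi_\pm \left(\{P,W^\varepsilon\}-\{W^\varepsilon,P\}\right) \Pi_\pm + \Pi_\pm r^\varepsilon \Pi_\pm - (\partial_t\Pi_\pm) W^\varepsilon \Pi_\pm - \Pi_\pm W^\varepsilon (\partial_t\Pi_\pm).
\]
The key algebraic observation is that the singular $O(1/\varepsilon)$ term vanishes identically: since $P\Pi_\pm = \lambda_\pm \Pi_\pm = \Pi_\pm P$, one computes $\Pi_\pm P W^\varepsilon \Pi_\pm = \lambda_\pm \Pi_\pm W^\varepsilon \Pi_\pm = \Pi_\pm W^\varepsilon P \Pi_\pm$, so $\Pi_\pm[P,W^\varepsilon]\Pi_\pm = 0$. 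This is precisely what removes the obstruction to passing to the limit at the level of the projected Wigner transform.

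The main technical obstacle is then a uniform-in-$\varepsilon$ control of the time derivative strong enough for a pointwise-in-$t$ compactness argument. Multiplying by a cutoff $\chi \in C^\infty_c(\mathbb{R}^3_x\times\mathbb{R}^3_\xi)$, I would estimate each surviving term in $L^\infty([-T,T]; H^{-1}(\mathbb{R}^6_{x,\xi}))$: the Poisson-bracket piece using the uniform $L^2$-bound on $W^\varepsilon$ (guaranteed by \eqref{eq:assumption lambda} via Plancherel) together with $\Pi_\pm \in C^1_t H^s_{\mathrm{loc}}$, $s>5/2$, from Lemma \ref{thm:regularity projections}, as already done in \eqref{RegPW-WP}; the two Leibniz terms directly in $L^\infty_t L^2$ by the same regularity; and the remainder via Lemma \ref{thm:limit remainder}, which yields $\chi \Pi_\pm r^\varepsilon \Pi_\pm = O(\varepsilon)$ in $L^\infty_t H^{-1}$.

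Combining these estimates, $\partial_t(\chi \Pi_\pm W^\varepsilon \Pi_\pm)$ is uniformly bounded in $L^\infty([-T,T]; H^{-1})$, while $\chi \Pi_\pm W^\varepsilon \Pi_\pm$ is uniformly bounded in $L^\infty_t L^2$. By an Ascoli-type argument, using the compactness of the embedding of bounded subsets of $L^2$ into $H^{-1}_{\mathrm{loc}}$, the weak $L^2$-convergence $W_\pm^{\mathrm{in},\varepsilon} \rightharpoonup W_\pm^{\mathrm{in}}$ lifts to convergence of the full trajectory in $C([-T,T]; H^{-1}_{\mathrm{loc}})$. Evaluating the limit at $t=0$ identifies $\Pi_\pm W|_{t=0} \Pi_\pm = W_\pm^{\mathrm{in}}$, and summing over the two signs, combined with the decomposition from the first step, gives $W|_{t=0} = W_+^{\mathrm{in}} + W_-^{\mathrm{in}}$, completing the proof.
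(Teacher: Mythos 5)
Your proposal is correct and follows essentially the same strategy as the paper's proof: the algebraic identity $\Pi_+[P,W]\Pi_- = 2\langle\xi-A\rangle\,\Pi_+W\Pi_-$ to derive the block-diagonal decomposition, compression of the Dirac--Wigner equation by $\Pi_\pm$ to kill the singular $O(1/\varepsilon)$ term, uniform $L^\infty_t H^{-1}_{\mathrm{loc}}$ control of $\partial_t(\chi\Pi_\pm W^\varepsilon\Pi_\pm)$, and an Ascoli/compact-embedding argument to upgrade to time-uniform $H^{-1}_{\mathrm{loc}}$ convergence so that the trace at $t=0$ is identified with $W_\pm^{\mathrm{in}}$. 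The only cosmetic difference is that you spell out the Aubin--Lions/Rellich-type compact embedding of $L^2$ into $H^{-1}_{\mathrm{loc}}$ that underlies the paper's terse appeal to Ascoli, but the content is identical.
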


Finally, we conclude the proof of Theorem \ref{thm:main1} by discussing the convergence of the densities \eqref{eq:charge current densities}.
We recall that $W=W^*\ge 0$. Assume that
\begin{equation}
\label{InitProba}
\iint_{\mathbb R^3\times\mathbb R^3}\left(\tr(W_+^{\mathrm{in}})+\tr(W_-^{\mathrm{in}})\right)dxd\xi=1.
\end{equation}

We shall need the following (classical) lemma, of independent interest.

\begin{lemma} Let $0\leq \rho(t,z)\equiv\rho \in C([0,T];H^{-m}(\mathbb R^d_z))\cap L^\infty([0,T];L^1(\mathbb R^d_z))$ satisfy
\begin{equation*}
\left\{
\begin{aligned}
{}&\partial_t\rho(t,z)+\nabla_z\cdot J(t,z)=0,
\\
&\rho|_{t=0}=\rho^{\mathrm{in}},
\end{aligned}\right.
\end{equation*}
where $J\in L^1([0,T]\times\mathbb R^d_z)$ and 
\begin{equation*}
0\le\rho^{\mathrm{in}}\text{ a.e. satisfying }\quad\int_{\mathbb R^d}\rho^{\mathrm{in}}(z)dz=1.
\end{equation*}
Then
\begin{equation*}
\int_{\mathbb R^d}\rho(t,z)dz=1\quad\text{ for a.e. }t\in[0,T].
\end{equation*}

\begin{proof}
Let $\chi\in C^\infty_c(\mathbb R^3\times\mathbb R^3)$ such that
\begin{equation*}
0\le\mathbf 1_{B(0,1)}\le\chi\le\mathbf 1_{B(0,2)}\quad\text{ and }\quad\|\nabla\chi\|_{L^\infty(\mathbb R^d)}\le 2.
\end{equation*}
Then
\begin{equation*}
\frac{d}{dt}\int_{\mathbb R^d}\rho(t,z)\chi(z/n)dz=\frac1n\int_{\mathbb R^d}J(t,z)\cdot\nabla\chi(z/n)dz.
\end{equation*}
In particular, there exists a negligible subset $N\subset(0,T]$ such that
\begin{equation*}
\|\rho(t,\cdot)\|_{L^1\mathbb R^d)}\le C<\infty,\qquad t\in[0,T]\setminus N.
\end{equation*}
For all $t\in[0,T]\setminus N$, by dominated convergence 
\begin{equation*}
\int_{\mathbb R^d}\rho(t,z)\chi(z/n)dz\to\int_{\mathbb R^d}\rho(t,z)dz\text{ as }n\to\infty,
\end{equation*}
and likewise
\begin{equation*}
\int_{\mathbb R^d}\rho^{\mathrm{in}}(z)\chi(z/n)dz\to\int_{\mathbb R^d}\rho^{\mathrm{in}}(z)dz=1.
\end{equation*}
Besides, enlarging the negligible set $N$ if needed, 
\begin{equation*}
\int_{\mathbb R^d}\rho(t,z)\chi(z/n)dz=\int_{\mathbb R^d}\rho^{\mathrm{in}}(z)\chi(z/n)dz+\frac1n\int_0^t\int_{\mathbb R^d}J(s,z)\cdot\nabla\chi(z/n)dzds,
\end{equation*}
for all $t\in[0,T]\setminus N$. Therefore
\begin{equation*}
\left|\int_{\mathbb R^d}\rho(t,z)\chi(z/n)dz-\int_{\mathbb R^d}\rho^{\mathrm{in}}(z)\chi(z/n)dz\right|\le\frac2n\int_0^T\int_{\mathbb R^d}|J(s,z)|dzds
\end{equation*}
and passing to the limit as $n\to\infty$ leads to the announced conclusion.
\end{proof}
\end{lemma}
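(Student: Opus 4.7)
The plan is to test the continuity equation against a spatial cutoff $\chi_n(z) := \chi(z/n)$, where $\chi\in C_c^\infty(\mathbb R^d)$ satisfies $\mathbf 1_{B(0,1)}\le\chi\le\mathbf 1_{B(0,2)}$ and $\|\nabla\chi\|_\infty\le 2$, and then let $n\to\infty$. Since $\rho\ge 0$ and $\rho\in L^\infty([0,T];L^1(\mathbb R^d))$, the $L^1$ mass at fixed time is pointwise bounded for almost every $t$, so dominated convergence will handle the limit $n\to\infty$ of the truncated mass. The function $\chi_n$ is a legitimate test function against the distributional identity $\partial_t\rho+\nabla_z\cdot J=0$, and pairing yields
\begin{equation*}
\frac{d}{dt}\int_{\mathbb R^d}\rho(t,z)\chi_n(z)\,dz=\frac{1}{n}\int_{\mathbb R^d}J(t,z)\cdot(\nabla\chi)(z/n)\,dz,
\end{equation*}
in the sense of distributions on $(0,T)$.

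Next, I would integrate this identity in time from $0$ to $t$. Since $J\in L^1([0,T]\times\mathbb R^d)$ and $\|\nabla\chi\|_\infty\le 2$, Fubini and the uniform bound give
\begin{equation*}
\left|\int_{\mathbb R^d}\rho(t,z)\chi_n(z)\,dz-\int_{\mathbb R^d}\rho^{\mathrm{in}}(z)\chi_n(z)\,dz\right|\le\frac{2}{n}\|J\|_{L^1([0,T]\times\mathbb R^d)},
\end{equation*}
for almost every $t$. The continuity $\rho\in C([0,T];H^{-m})$ together with the $L^\infty L^1$ bound ensures the identity in $t$ is valid after excluding a single negligible set $N\subset[0,T]$ that does not depend on $n$.

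For the passage to the limit, I would fix $t\in[0,T]\setminus N$ and note that $\rho(t,\cdot)\in L^1(\mathbb R^d)$ and $\rho^{\mathrm{in}}\in L^1(\mathbb R^d)$. Dominated convergence with majorants $\rho(t,\cdot)$ and $\rho^{\mathrm{in}}$ gives
\begin{equation*}
\int_{\mathbb R^d}\rho(t,z)\chi_n(z)\,dz\longrightarrow\int_{\mathbb R^d}\rho(t,z)\,dz,\qquad\int_{\mathbb R^d}\rho^{\mathrm{in}}(z)\chi_n(z)\,dz\longrightarrow 1,
\end{equation*}
as $n\to\infty$, while the right-hand side of the displayed inequality tends to $0$. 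Combining these yields $\int\rho(t,z)\,dz=1$ for almost every $t$.

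The main subtlety I anticipate is not in any one of these steps individually, but in the careful justification that the time-integrated identity holds pointwise a.e. in $t$: one needs to know that the pairing $t\mapsto\int\rho(t,z)\chi_n(z)\,dz$ has an a.e.-defined absolutely continuous representative whose derivative agrees with the distributional one, and that the exceptional set can be chosen independently of $n$. This is handled by the $C([0,T];H^{-m})$ regularity plus the fact that the source term $\frac1n\int J\cdot\nabla\chi_n\,dz$ lies in $L^1_t$, so that the primitive identity holds pointwise on a common full-measure subset of $[0,T]$.
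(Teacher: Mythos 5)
Your proposal is correct and follows essentially the same route as the paper: truncate with $\chi(z/n)$, integrate the continuity equation in time, bound the flux term by $\tfrac{2}{n}\|J\|_{L^1([0,T]\times\mathbb R^d)}$, and pass to the limit by dominated convergence using the $L^\infty_t L^1_z$ control. Your closing remark that the $C([0,T];H^{-m})$ continuity lets one take the exceptional set independent of $n$ is a slightly more explicit version of the paper's ``enlarging the negligible set $N$ if needed'' (which also suffices, since $n$ ranges over a countable set).
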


\begin{lemma}
Under the assumptions of Theorem \ref{thm:main1}, together with \eqref{InitProba}, one has
\begin{equation*}
\iint_{\mathbb R^6_{x,\xi}}\tr(W)(t,x,\xi)dxd\xi=1\quad\text{ for a.e. }t\ge 0.
\end{equation*}
In particular, for a.e. $t\ge 0$, one has the convergence of the densities \eqref{eq:charge current densities}:
\begin{align*}
\sum_{j=1}^{\infty}\lambda^\varepsilon_j|\psi^\varepsilon_j(t,\cdot)|^2 &\to\rho(t,\cdot)=\int_{\mathbb R^3_\xi}\tr(W(t,\cdot,\xi))d\xi, \\
\sum_{j=1}^{\infty}\lambda^\varepsilon_j\langle \psi^\varepsilon_j(t,\cdot),\alpha_k \psi^\varepsilon_j(t,\cdot)\rangle_{\mathbb{C}^4} &\to J_k(t,\cdot)=\int_{\mathbb R^3_\xi}\tr(\alpha_k W(t,\cdot,\xi))d\xi,
\end{align*}
narrowly on $\mathbb R^3\text{ as }\varepsilon\to 0$ and
\begin{equation*}
\sup_{0< \varepsilon \leq 1}\sum_{j=1}^{\infty}\lambda^\varepsilon_j\int_{|\xi|>R/\varepsilon}\left|\widehat{\chi\psi^\varepsilon_j(t,\xi)}\right|^2d\xi \longrightarrow 0 \quad \text{ as } R\rightarrow \infty,
\end{equation*}
for all $\chi\in C_c(\mathbb R^3)$.
\begin{proof}
Taking the trace of both sides of the equation in \eqref{WignerLim3} shows that
\begin{equation*}
\partial_t\tr(W)+\sum_{j=1}^3(\partial_{x_j}+\nabla_xA_j\cdot\nabla_\xi)\tr(\alpha_jW) - \nabla_x A_0 \cdot \nabla_{\xi} \tr(W)=0,
\end{equation*}
in view of \eqref{RegPW-WP}. Now we apply the lemma to $\rho=\tr(W)(t,x,\xi) \in L^{\infty}_tL^2_{x,\xi}$ with $z=(x,\xi)\in\mathbb R^6$ and
\begin{equation*}
J(t,x,\xi):= \begin{pmatrix}\tr(\alpha_1W)\\ \tr(\alpha_2W)\\\tr(\alpha_3W)\\ \partial_{x_1}A \cdot \tr(\alpha W)\\ 
\partial_{x_2}A \cdot \tr(\alpha W)\\ \partial_{x_3}A \cdot \tr(\alpha W) \\  -\partial_{x_1}A_0  \tr( W)\\ 
-\partial_{x_2}A_0 \tr( W)\\ -\partial_{x_3}A_0  \tr( W)  \end{pmatrix}.
\end{equation*}
Then $J \in L^1_{x,\xi}$ since $\nabla A, \nabla A_0 \in L^2_x$ and $W\in L^2_{x,\xi}$. We conclude that
\begin{equation*}
\iint_{\mathbb R^3\times\mathbb R^3}\tr(W)(t,x,\xi)dxd\xi=\iint_{\mathbb R^3\times\mathbb R^3}\left(\tr(W_+^{\mathrm{in}})+\tr(W_-^{\mathrm{in}})\right)(x,\xi)dxd\xi=1,
\end{equation*}
for a.e. $t\ge 0$.

\smallskip
Let us explain this equality in terms of the density operator $
R^{\varepsilon}(t)\in \mathfrak{S}^1(\mathfrak{H})$
that is the solution of the von Neumann equation \eqref{eq:Dirac equation} associated to the Dirac operator. For the integral kernel of $R^{\varepsilon}(t)$ we have
\begin{equation*}
R^{\varepsilon}(t,X,Y)=\sum_{j=1}^{\infty}\lambda^\varepsilon_j\psi^\varepsilon_j(t,X)\overline{\psi^\varepsilon_j(t,Y)}^\top,
\end{equation*}
with
\begin{equation*}
\lambda^\varepsilon_j\ge 0,\quad\sum_{j=1}^{\infty}\lambda^\varepsilon_j=1,\qquad\int_{\mathbb R^3}\langle \psi^\varepsilon_j(t,X),{\psi^\varepsilon_k(t,X)}\rangle_{\mathbb{C}^4} dX=\delta_{jk}.
\end{equation*}
Assume that
\begin{equation*}
\sum_{j=1}^{\infty}\lambda^\varepsilon_j\psi^\varepsilon_j(t,\cdot) \overline{\psi^\varepsilon_j(t,\cdot)}^\top\to\nu(t,\cdot)\in\mathcal M(\mathbb R^3),
\end{equation*}
vaguely as $\varepsilon\to 0$. Then, for each $\chi\in C_c(\mathbb R^3)$ and each $\phi\in C(\mathbb R^3)$ s.t.
\begin{equation*}
\mathbf 1_{B(0,1)}\le\phi\le\mathbf 1_{B(0,2)}\quad\text{ on }\mathbb R^3,
\end{equation*}
one has
\begin{equation*}
\begin{aligned}
\int_{\mathbb R^6_{x,\xi}}\phi\left(\frac\xi{R}\right)|\chi(x)|^2\tr(W)(t,x,\xi)dxd\xi
\\
=\lim_{\varepsilon\to 0}\sum_{j=1}^{\infty}\lambda^\varepsilon_j\int_{\mathbb R^6_{x,\xi}}\tr(W^{\varepsilon}[\chi\psi^\varepsilon_j(t,\cdot)])(x,\xi)\phi\left(\frac\xi{R}\right)dxd\xi
\\
=\lim_{\varepsilon\to 0}\sum_{j=1}^{\infty}\lambda^\varepsilon_j\left\langle \phi\left(\frac{\varepsilon D_x}{R}\right)\chi\psi^\varepsilon_j(t,\cdot),\chi\psi^\varepsilon_j(t,\cdot)\right\rangle_{\mathfrak{H}}&.
\end{aligned}
\end{equation*}
This can be recast as
\begin{equation*}
\begin{aligned}
0\le\int_{\mathbb R^3}|\chi(x)|^2\tr(\nu)(t,dx)-\int_{\mathbb R^6_{x,\xi}}\phi\left(\frac\xi{R}\right)|\chi(x)|^2\tr(W)(t,x,\xi)dxd\xi
\\
=\lim_{\varepsilon\to 0}\sum_{j=1}^{\infty}\lambda^\varepsilon_j\left\langle\left(I-\phi\left(\frac{\varepsilon D_x}{R}\right)\right)\chi\psi^\varepsilon_j(t,\cdot),\chi\psi^\varepsilon_j(t,\cdot)\right\rangle_{\mathfrak{H}}
\\
=\lim_{\varepsilon\to 0}\sum_{j=1}^{\infty}\lambda^\varepsilon_j\int_{\mathbb R^3}\left(1-\phi\left(\frac{\xi}R\right)\right)\left|\widehat{\chi\psi^\varepsilon_j}(t,\xi)\right|^2\frac{d\xi}{(2\pi)^3}.
\end{aligned}
\end{equation*}
Therefore
\begin{equation*}
0\le\int_{\mathbb R^3}|\chi(x)|^2\tr(\nu)(t,dx)-\int_{\mathbb R^6_{x,\xi}}\phi\left(\frac\xi{R}\right)|\chi(x)|^2\tr(W)(t,x,\xi)dxd\xi\to 0,
\end{equation*}
as $R\to\infty$, meaning that
\begin{equation*}
\tr(\nu)(t,\cdot)=\int_{\mathbb R^3_\xi}\tr(W)(t,\cdot,\xi)d\xi=:\rho(t,\cdot),
\end{equation*}
if and only if
\begin{equation*}
\sup_{0< \varepsilon \leq 1}\sum_{j=1}^{\infty}\lambda^\varepsilon_j\int_{|\xi|>R/\varepsilon}\left|\widehat{\chi\psi^\varepsilon_j(t,\xi)}\right|^2d\xi \longrightarrow 0 \quad \text{ as } R\rightarrow \infty.
\end{equation*}
Besides
\begin{equation*}
\iint_{\mathbb R^6_{x,\xi}}\tr(W)(t,x,\xi)dx d\xi=\int_{\mathbb R^3}\nu(t,dx)=1=\sum_{j=1}^{\infty}\lambda_j^\varepsilon=\sum_{j=1}^{\infty}\lambda_j^\varepsilon\int_{\mathbb R^3}|\psi_j^\varepsilon(t,x)|^2dx,
\end{equation*}
if and only if
\begin{equation*}
\sum_{j=1}^{\infty}\lambda^\varepsilon_j|\psi^\varepsilon_j(t,\cdot)|^2\to\tr(\nu(t,\cdot)),
\end{equation*}
narrowly as $\varepsilon\to 0$, meaning in particular that the family of functions
\begin{equation*}
\mathbb R^3\ni x\mapsto\sum_{j=1}^{\infty}\lambda^\varepsilon_j|\psi^\varepsilon_j(t,x)|^2\in[0,+\infty),
\end{equation*}
is tight on $\mathbb R^3$. 

For the current density we assume that
\begin{equation*}
\sum_{j=1}^{\infty}\lambda^\varepsilon_j\psi^\varepsilon_j(t,\cdot) \overline{\alpha_k \psi^\varepsilon_j(t,\cdot)}^\top\to\mu_k(t,\cdot)\in\mathcal M(\mathbb R^3),
\end{equation*}
vaguely for $k=1,2,3$. Taking $\chi$ as above we obtain
\begin{equation*}
\begin{aligned}
\int_{\mathbb R^6_{x,\xi}}\phi\left(\frac\xi{R}\right)|\chi(x)|^2\tr(\alpha_k W)(t,x,\xi)dxd\xi
\\
=\lim_{\varepsilon\to 0}\sum_{j=1}^{\infty}\lambda^\varepsilon_j\int_{\mathbb R^6_{x,\xi}}\tr(W^{\varepsilon}[\chi\psi^\varepsilon_j(t,\cdot),\chi\alpha_k \psi^\varepsilon_j(t,\cdot)])(x,\xi)\phi\left(\frac\xi{R}\right)dxd\xi
\\
=\lim_{\varepsilon\to 0}\sum_{j=1}^{\infty}\lambda^\varepsilon_j\left\langle \phi\left(\frac{\varepsilon D_x}{R}\right)\chi\psi^\varepsilon_j(t,\cdot),\chi\alpha_k\psi^\varepsilon_j(t,\cdot)\right\rangle_{\mathfrak{H}}&,
\end{aligned}
\end{equation*}
where in the second line $W^{\varepsilon}[\chi \psi_j^{\varepsilon},\chi \alpha_k \psi_j^{\varepsilon}]$ is defined as in \eqref{eq:wigner transform general} with $f = \chi \psi_j^{\varepsilon}$ and $g = \chi \alpha_k \psi_j^{\varepsilon}$. Again, we rewrite this as
\begin{equation*}
\begin{aligned}
0\le\int_{\mathbb R^3}|\chi(x)|^2\tr(\mu_k)(t,dx)-\int_{\mathbb R^6_{x,\xi}}\phi\left(\frac\xi{R}\right)|\chi(x)|^2\tr(\alpha_k W)(t,x,\xi)dxd\xi
\\
=\lim_{\varepsilon\to 0}\sum_{j=1}^{\infty}\lambda^\varepsilon_j\left\langle\left(I-\phi\left(\frac{\varepsilon D_x}{R}\right)\right)\chi\psi^\varepsilon_j(t,\cdot),\chi\alpha_k\psi^\varepsilon_j(t,\cdot)\right\rangle_{\mathfrak{H}}
\\
=\lim_{\varepsilon\to 0}\sum_{j=1}^{\infty}\lambda^\varepsilon_j\int_{\mathbb R^3}\left(1-\phi\left(\frac{\xi}R\right)\right)\langle\widehat{\chi\psi^\varepsilon_j}(t,\xi),\widehat{\chi\alpha_k \psi^\varepsilon_j}(t,\xi)\rangle_{\mathbb{C}^4}\frac{d\xi}{(2\pi)^3}.
\end{aligned}
\end{equation*}
Next we make use of the Cauchy-Schwarz inequality to bound
\begin{align*}
    |\langle\widehat{\chi\psi^\varepsilon_j}(t,\xi),\widehat{\chi\alpha_k \psi^\varepsilon_j}(t,\xi)\rangle_{\mathbb{C}^4}| &\leq |\widehat{\chi\psi^\varepsilon_j}(t,\cdot)||\widehat{\chi\alpha_k\psi^\varepsilon_j}(t,\cdot)|\\
    &= |\widehat{\chi\psi^\varepsilon_j}(t,\cdot)|^2.
\end{align*}
Therefore, we can reduce the case of $J_k^{\varepsilon}$ to the case of $\rho^{\varepsilon}$, which we have already proven, and we obtain
\begin{equation*}
\tr(\mu_k)(t,\cdot)=\int_{\mathbb R^3_\xi}\tr(\alpha_k W)(t,\cdot,\xi)d\xi=:J_k(t,\cdot).
\end{equation*}
This completes the proof.
\end{proof}
\end{lemma}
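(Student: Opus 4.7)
The plan is to reduce the scalar assertions about $\tr(W)$ to a continuity equation and then convert the resulting conservation of total mass into the narrow (tight) convergence of $\rho^{\varepsilon}$ and $J^{\varepsilon}_k$ via a standard Wigner-transform tightness argument.

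First, I would take the trace (in $\mathbb C^4$) of both sides of the limiting matrix-valued Liouville equation \eqref{WignerLim3}. The Lagrange-multiplier term drops out, since $\tr[P,Y]=0$ by cyclicity, and the remaining terms give a continuity equation
\begin{equation*}
\partial_t\tr(W)+\sum_{j=1}^3(\partial_{x_j}+\nabla_xA_j\cdot\nabla_\xi)\tr(\alpha_jW)-\nabla_xA_0\cdot\nabla_\xi\tr(W)=0
\end{equation*}
on $\mathbb R_t\times\mathbb R^6_{x,\xi}$. Rewriting this as $\partial_t\rho+\nabla_{(x,\xi)}\cdot J=0$ with $\rho=\tr(W)\ge 0$ (nonnegativity inherited from $W=W^*\ge 0$) and $J$ a nine-component flux built from $\tr(\alpha_jW)$ and products of the $L^2$ gradients $\nabla A,\nabla A_0$ with $\tr(W),\tr(\alpha W)\in L^\infty_tL^2_{x,\xi}$, Cauchy-Schwarz yields $J\in L^1([0,T]\times\mathbb R^6)$. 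I would then apply the preceding auxiliary lemma (with $d=6$) to conclude $\iint\tr(W)(t,x,\xi)\,dxd\xi=1$ for a.e.\ $t\ge 0$, using \eqref{InitProba} for the initial mass.

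Next, to upgrade this to narrow convergence of the densities, I would fix $\chi\in C_c(\mathbb R^3)$ and $\phi\in C(\mathbb R^3)$ with $\mathbf 1_{B(0,1)}\le\phi\le\mathbf 1_{B(0,2)}$, and compute
\begin{equation*}
\iint\phi(\xi/R)|\chi(x)|^2\tr(W)(t,x,\xi)\,dxd\xi=\lim_{\varepsilon\to 0}\sum_j\lambda^\varepsilon_j\bigl\langle \phi(\varepsilon D_x/R)\chi\psi^\varepsilon_j,\chi\psi^\varepsilon_j\bigr\rangle_{\mathfrak H},
\end{equation*}
using the definition of $W^\varepsilon$ and Weyl calculus, together with the weak$^*$ convergence $W^\varepsilon\rightharpoonup W$ in $L^\infty_tL^2_{x,\xi}$. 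Let $\nu(t,\cdot)$ denote any vague limit of $\sum_j\lambda^\varepsilon_j\psi^\varepsilon_j\overline{\psi^\varepsilon_j}^\top$. Subtracting the displayed quantity from $\int|\chi|^2\tr(\nu)(t,dx)$ produces a nonnegative difference equal to
\begin{equation*}
\lim_{\varepsilon\to 0}\sum_j\lambda^\varepsilon_j\int_{\mathbb R^3}\bigl(1-\phi(\xi/R)\bigr)\bigl|\widehat{\chi\psi^\varepsilon_j}(t,\xi)\bigr|^2\,\frac{d\xi}{(2\pi)^3}.
\end{equation*}
Letting $R\to\infty$, the left-hand side tends to $\int|\chi|^2\tr(\nu)(t,dx)-\int|\chi|^2\rho(t,x)dx$ by monotone convergence. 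The already-proved identity $\iint\tr(W)\,dx d\xi=1$, combined with $\sum_j\lambda_j^\varepsilon\|\psi^\varepsilon_j(t)\|^2=1$ and a standard truncation, forces both the vague limit $\tr(\nu)=\rho$ to hold and the high-frequency tails
\begin{equation*}
\sup_{0<\varepsilon\le 1}\sum_j\lambda^\varepsilon_j\int_{|\xi|>R/\varepsilon}\bigl|\widehat{\chi\psi^\varepsilon_j}(t,\xi)\bigr|^2d\xi\longrightarrow 0\quad\text{as }R\to\infty
\end{equation*}
to vanish uniformly in $\varepsilon$. This equivalence, plus tightness of the family $\sum_j\lambda^\varepsilon_j|\psi^\varepsilon_j(t,\cdot)|^2$ on $\mathbb R^3$ (inherited from the $L^1$ normalization), upgrades the vague convergence to narrow convergence.

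Finally, for the current densities $J^\varepsilon_k$ I would repeat the argument with $\alpha_k$ inserted in the second slot of the Wigner bracket, obtaining an analogous expression involving $\langle\widehat{\chi\psi^\varepsilon_j},\widehat{\chi\alpha_k\psi^\varepsilon_j}\rangle_{\mathbb C^4}$. Since $\alpha_k^2=I_4$, the Cauchy--Schwarz inequality gives $|\langle\widehat{\chi\psi^\varepsilon_j},\widehat{\chi\alpha_k\psi^\varepsilon_j}\rangle_{\mathbb C^4}|\le|\widehat{\chi\psi^\varepsilon_j}|^2$, which reduces the required tail bound to the one already established for the density, yielding $\tr(\mu_k)(t,\cdot)=J_k(t,\cdot)$ narrowly. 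The principal technical obstacle here is the derivation of the continuity equation in a low-regularity framework where $W$ is only an $L^\infty_tL^2_{x,\xi}$ matrix-valued measure and the Lagrange multiplier $Y$ is merely in $H^{-1}_{\mathrm{loc}}$; one must verify that $\tr[P,Y]$ really vanishes as a distribution and that the resulting flux is integrable enough to apply the conservation lemma — both of which are ensured by the formula \eqref{eq:Y} for $Y$ and the Sobolev hypotheses $A\in C^1_tH^s$, $s>5/2$.
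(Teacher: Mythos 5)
Your proposal follows exactly the same route as the paper: take the trace of the limiting Liouville equation to get a continuity equation, invoke the preceding conservation lemma to obtain $\iint\tr(W)\,dx\,d\xi=1$, then run the standard Wigner-tightness argument with cutoffs $\phi(\xi/R)|\chi(x)|^2$ to turn conservation of mass into narrow convergence and vanishing of high-frequency tails, and finish the current case by the same Cauchy--Schwarz reduction using $\alpha_k^2=I_4$. The only cosmetic addition is your explicit remark that $\tr[P,Y]=0$ by cyclicity, which the paper leaves implicit; the rest matches the paper's proof step for step.
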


\begin{proof}[Proof of Theorem \ref{thm:main1}]
Due to condition \eqref{eq:assumption lambda} and the considerations in Remark \ref{remark L^2}, we obtain
\begin{equation*}
    \|W^{\varepsilon}_{\mathrm{in}}\|_{L^2(\mathbb{R}^3_x \times \mathbb{R}^3_{\xi})} \leq C,
\end{equation*}
where $C$ is independent of $t$. The Dirac-Wigner equation conserves the $L^2$ norm due to Corollary \ref{thm:existence wigner}. Thus we conclude that $W^{\varepsilon} \in L^{\infty}(\mathbb{R}_t, L^2(\mathbb{R}^3_x \times \mathbb{R}^3_{\xi}))$ and thus Proposition \ref{proposition general} applies. Theorem \ref{thm:main1} then follows from the lemmas after Proposition \ref{proposition general}.
\end{proof}
\subsection{Proof of Theorem \ref{thm:main2}}
\label{sec:proofmain2}
We repeat the projected limit of the Wigner equation, i.e. \eqref{eq:Wigner matrix equation limit projected},
\begin{align}
     \Pi_\pm \partial_t W \Pi_\pm &= \frac{1}{2} \Pi_\pm\left(\left\{ P ,W\right\} -\left\{W, P\right\} \right)\Pi_\pm, \quad W|_{t=0} = W^{\mathrm{in}}_{\pm} := \Pi_{\pm}W_{\mathrm{in}}\Pi_{\pm}, \\
     [P,W] &= 0, \quad \text{for all } t.
\end{align}
Recall that
\begin{equation}
     P(t,x,\xi)=\alpha \cdot (\xi-A(t,x))+\beta-A_0(t,x),
\end{equation}
and
\begin{equation}
    P = \lambda_+ \Pi_+ + \lambda_- \Pi_-, \qquad \lambda_{\pm} = \pm \sqrt{1+|\xi-A|^2} -A_0.
\end{equation}
Moreover, since $[P,W]=0$ we have
\begin{equation}
    [\Pi_{\pm},W] = 0.
\end{equation}
We state the following two lemmas, which will be needed further down the line.
\begin{lemma}
\label{lemma derivation Pi}
    Let $\Pi$ be a projection and let $D$ be a derivation. Then
    \begin{equation}
        \Pi (D\Pi) \Pi = 0.
        \label{eq:identity Pi}
    \end{equation}
    If $\Pi_1 + \Pi_2 = \mathrm{Id}$ are projections, then in addition
    \begin{equation}
        \Pi_1(D\Pi_{2}) \Pi_1 = \Pi_2(D\Pi_{1}) \Pi_2 = 0.
        \label{eq:identity Pi 2}
    \end{equation}
    \begin{proof}
        Since $\Pi^2 = \Pi$,
        \begin{equation*}
    D \Pi = D \Pi^2 =  \Pi D \Pi + (D \Pi)\Pi.
\end{equation*}
Multiplying by $\Pi$ from the left yields the first claim. For the second claim, use $D\Pi_1 = - D\Pi_2$.
    \end{proof}
\end{lemma}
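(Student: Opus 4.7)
The plan is to exploit the Leibniz rule (the defining property of a derivation) applied to the idempotency relation $\Pi^2 = \Pi$. Differentiating both sides produces the key algebraic identity $(D\Pi)\Pi + \Pi(D\Pi) = D\Pi$, from which both claims of the lemma will follow by elementary manipulation.

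For the first identity, I would multiply the relation $(D\Pi)\Pi + \Pi(D\Pi) = D\Pi$ by $\Pi$ on the left and use $\Pi^2 = \Pi$ to simplify the second term on the left; the simplified term cancels with the right-hand side and leaves $\Pi(D\Pi)\Pi = 0$. An equally short alternative is to sandwich the identity with $\Pi$ on both sides, which gives $2\Pi(D\Pi)\Pi = \Pi(D\Pi)\Pi$ and thus the vanishing. For the second identity, the completeness relation $\Pi_1 + \Pi_2 = \mathrm{Id}$ forces $D\Pi_1 = -D\Pi_2$ (because a derivation kills constants), so $\Pi_1(D\Pi_2)\Pi_1 = -\Pi_1(D\Pi_1)\Pi_1 = 0$ by the first part, and symmetrically with the roles of $\Pi_1$ and $\Pi_2$ exchanged.

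There is no genuine obstacle here: the statement is a purely algebraic consequence of idempotency together with the Leibniz rule, and it applies uniformly to any derivation one may later encounter (partial derivatives in $t$, $x_j$ or $\xi_j$, the total time derivative, or Poisson brackets $\{f, \cdot\}$ against a fixed matrix-valued function). Its role in what follows is as a bookkeeping device: many vanishing identities needed for extracting the Berry term $H_{\mathrm{be}}$ and the Poissonian curvature $H_{\mathrm{pc}}$ of Theorem \ref{thm:main2} reduce, via $[\Pi_\pm, W]=0$ and $\Pi_+\Pi_- = \Pi_-\Pi_+ = 0$, to expressions of the form $\Pi_\pm (D \Pi_\pm) \Pi_\pm$ which the present lemma eliminates without further work.
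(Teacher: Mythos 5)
Your proof is correct and follows the same route as the paper: differentiate $\Pi^2 = \Pi$ via the Leibniz rule, multiply by $\Pi$ on the left (or sandwich with $\Pi$), and for the second identity use $D\Pi_1 = -D\Pi_2$ from $D(\mathrm{Id})=0$. The alternative sandwich argument is a nice equivalent formulation, but the substance is identical.
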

\begin{lemma}
\label{eq:lemma derivation projection}
    Let $D$ be a derivation and let $\Pi$ be a projection and $W$ be a matrix that commutes with $\Pi$, i.e. $[W,\Pi]=0$. Then
    \begin{equation}
        \Pi (DW) \Pi = D(\Pi W \Pi) - [\Pi W,[\Pi,D\Pi]] .
    \end{equation}
    \begin{proof}
        By the product rule for derivations,
        \begin{equation*}
            D(\Pi W \Pi) = \Pi (DW) \Pi + (D\Pi) W\Pi + \Pi W(D\Pi).
        \end{equation*}
        Since $[W,\Pi]=0$ and thus $W\Pi=\Pi W\Pi$ as well as $\Pi(D\Pi)\Pi =0$ (Lemma \ref{lemma derivation Pi}),
        \begin{align*}
            (D\Pi)W\Pi &= (D\Pi)\Pi W\Pi- W\Pi(D\Pi)\Pi \\
            &= -[W\Pi,(D\Pi)\Pi].
        \end{align*}
        Similary,
        \begin{align*}
            \Pi W(D\Pi) &= \Pi W\Pi(D\Pi)- \Pi(D\Pi)\Pi W \\
                &= [W\Pi,\Pi(D\Pi)].
        \end{align*}
        Thus,
        \begin{equation*}
            (D\Pi) W\Pi + \Pi W(D\Pi) = [W\Pi,[\Pi,(D\Pi)].
        \end{equation*}
        This proves the claim.
    \end{proof}
\end{lemma}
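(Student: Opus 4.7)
The plan is to apply the Leibniz rule for the derivation $D$ to the triple product $\Pi W \Pi$ and then identify the resulting correction terms as the claimed double commutator, using the preceding lemma as the only nontrivial input.

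First I would expand by the product rule,
\begin{equation*}
D(\Pi W \Pi) = (D\Pi)\, W\Pi + \Pi\,(DW)\,\Pi + \Pi W\,(D\Pi),
\end{equation*}
which immediately rearranges to
\begin{equation*}
\Pi(DW)\Pi \;=\; D(\Pi W \Pi) \;-\; (D\Pi) W \Pi \;-\; \Pi W (D\Pi).
\end{equation*}
Thus the lemma reduces to showing the purely algebraic identity
\begin{equation*}
(D\Pi) W \Pi + \Pi W (D\Pi) \;=\; [\Pi W,\,[\Pi, D\Pi]].
\end{equation*}

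To establish this, I would expand the inner commutator $[\Pi, D\Pi] = \Pi(D\Pi) - (D\Pi)\Pi$ and then distribute $[\Pi W, \cdot\,]$. Two observations will be used at each step: (i) since $[W,\Pi]=0$ and $\Pi^2=\Pi$, one has $\Pi W = W\Pi = \Pi W \Pi$; (ii) Lemma \ref{lemma derivation Pi} gives $\Pi(D\Pi)\Pi = 0$. Together these kill the "sandwiched" terms arising in the expansion: for instance $\Pi W (D\Pi)\Pi = (\Pi W\Pi)(D\Pi)\Pi = \Pi W\,\bigl(\Pi(D\Pi)\Pi\bigr) = 0$, and similarly $\Pi(D\Pi)\Pi W = 0$. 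What survives, after rewriting $\Pi W \Pi = \Pi W = W\Pi$ in the two remaining terms of the double commutator, is precisely $\Pi W (D\Pi) + (D\Pi) W \Pi$, completing the identification and hence the proof.

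There is no real obstacle here; the statement is a bookkeeping identity. The only subtle point is to invoke $[W,\Pi]=0$ at the correct moment so that the factors of $\Pi$ land adjacent to $(D\Pi)$ and Lemma \ref{lemma derivation Pi} can be applied to eliminate the diagonal cross terms. Once that pattern is recognized, the verification is a one-line computation.
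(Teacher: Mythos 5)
Your proof is correct and takes essentially the same approach as the paper: both start from the Leibniz rule applied to $\Pi W\Pi$, reduce the claim to the identity $(D\Pi)W\Pi+\Pi W(D\Pi)=[\Pi W,[\Pi,D\Pi]]$, and verify it using $\Pi W=W\Pi=\Pi W\Pi$ together with $\Pi(D\Pi)\Pi=0$ from the preceding lemma. The only difference is the direction of the bookkeeping — the paper builds the two inner commutators from the correction terms by inserting vanishing terms, whereas you expand the double commutator and observe the cross terms vanish — but this is the same computation read backwards.
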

Denote 
\begin{equation}
    W_j = \Pi_j W \Pi_j,
\end{equation}
where $j=\pm$.
Then, using $\Pi_{\pm}\Pi_{\mp} = 0$, $\Pi_{\pm}^2 = \Pi_{\pm}$ and the product rule for the Poisson bracket,
\begin{align}
    \Pi_j \partial_t W \Pi_j &= \frac{1}{2} \Pi_j \big( \{ ( \lambda_+ \Pi_+ +\lambda_-\Pi_-),W\} - \{W,( \lambda_+ \Pi_+ +\lambda_-\Pi_- ) \} \big) \Pi_j \nonumber \\
    &= \Pi_j \{\lambda_j,W\}\Pi_j \nonumber \\
    &\qquad +\frac{1}{2} 
    \lambda_+ \Pi_j ( \{\Pi_+,W\}-\{W,\Pi_+\})\Pi_j \nonumber\\
    &\qquad +\frac{1}{2} 
    \lambda_- \Pi_j ( \{\Pi_-,W\}-\{W,\Pi_-\})\Pi_j \nonumber \\
    &= (a) + (b),
\end{align}
where
\begin{equation}
    (a) := \Pi_j \{\lambda_j,W\}\Pi_j,
\end{equation}
and
\begin{equation}
    (b) := \frac{1}{2} 
    \lambda_+ \Pi_j ( \{\Pi_+,W\}-\{W,\Pi_+\})\Pi_j  +\frac{1}{2} 
    \lambda_- \Pi_j ( \{\Pi_-,W\}-\{W,\Pi_-\})\Pi_j.
\end{equation}
Consider first $(a)$. 

Using Lemma \ref{eq:lemma derivation projection} and the fact that the Poisson bracket is a derivation,
\begin{align}
    \Pi_j \{\lambda_j,W\}\Pi_j &= \{\lambda_j,W_j\} - [W_j,[\Pi_j,\{\lambda_j,\Pi_j\}]
    \label{eq: (a) term}.
\end{align}
Now consider $(b)$. Note that using $\Pi_+ =I-\Pi_-$ we have
\begin{align*}
    \Pi_+ ( \{\Pi_+,W\}-\{W,\Pi_+\})\Pi_+ = - \Pi_+ ( \{\Pi_-,W\}-\{W,\Pi_-\})\Pi_+,
\end{align*}
and vice versa. Thus we only have to consider a term of the form
\begin{equation}
    \Pi_j ( \{\Pi_k,W\}-\{W,\Pi_k\})\Pi_j,
\end{equation}
for $j\neq k$. We have the identity 
\begin{equation}
    A\{B,C\} - \{A,B\}C = \{AB,C\} - \{A,BC\}.
    \label{eq:Poisson identity}
\end{equation}
Choosing $A=\Pi_j$, $B=\Pi_k$ and $C=W$ yields, using $\Pi_j\Pi_k=0$
\begin{align*}
    \Pi_j\{\Pi_k,W\} - \{\Pi_j,\Pi_k\}W &= \{\Pi_j\Pi_k,W\}-\{\Pi_j,\Pi_k W\} \\ &= -\{\Pi_j,\Pi_k W\},
\end{align*}
and choosing $A=W$, $B=\Pi_k$ and $C=\Pi_j$,
\begin{align*}
    W \{\Pi_k,\Pi_j\} - \{W,\Pi_k\}\Pi_j &= \{W\Pi_k,\Pi_j\} - \{W,\Pi_k\Pi_j\} \\ &= \{W\Pi_k,\Pi_j\}.
\end{align*}
Therefore,
\begin{align*}
    \Pi_j ( \{\Pi_k,W\}-\{W,\Pi_k\})\Pi_j &= \{\Pi_j,\Pi_k\} W \Pi_j - \{\Pi_j,\Pi_k W\}\Pi_j \\
    &\qquad + \Pi_j\{W \Pi_k,\Pi_j\} - \Pi_j W\{\Pi_k,\Pi_j\}.
\end{align*}
Using again \eqref{eq:Poisson identity} with $A= \Pi_j$, $B=\Pi_kW =W\Pi_k$ and $C= \Pi_j$, as well as $\Pi_j\Pi_k=0$, we observe that the two terms in the middle vanish. Thus,
\begin{align*}
    \Pi_j ( \{\Pi_k,W\}-\{W,\Pi_k\})\Pi_j &= \{\Pi_j,\Pi_k\} W \Pi_j   - \Pi_j W\{\Pi_k,\Pi_j\}.
\end{align*}
It remains to observe that we can rewrite $\{\Pi_j,\Pi_k\}$ and $\{\Pi_k,\Pi_j\}$ as
\begin{align*}
    \{\Pi_j,\Pi_k\} &= - \Pi_j\{\Pi_k,\Pi_k\} +\{\Pi_j,\Pi_k\}\Pi_k \\
    \{\Pi_k,\Pi_j\} &= - \{\Pi_k,\Pi_k\} \Pi_j + \Pi_k\{\Pi_k,\Pi_j\},
\end{align*}
which again is an application of \eqref{eq:Poisson identity}. This allows us to write
\begin{align*}
    \Pi_j ( \{\Pi_k,W\}-\{W,\Pi_k\})\Pi_j &= [W_j,\Pi_j\{\Pi_k,\Pi_k\}\Pi_j].
\end{align*}
Then $(b)$ becomes
\begin{align*}
    (b) &= \frac{1}{2} 
    \lambda_+ \Pi_j ( \{\Pi_+,W\}-\{W,\Pi_+\})\Pi_j  +\frac{1}{2} 
    \lambda_- \Pi_j ( \{\Pi_-,W\}-\{W,\Pi_-\})\Pi_j \\ &= \mp\frac{1}{2}(\lambda_+-\lambda_-)[W_\pm, \Pi_{\pm} \{\Pi_\mp,\Pi_\mp\}\Pi_\pm] \\
    &= \mp\langle \xi-A\rangle[W_\pm, \Pi_{\pm} \{\Pi_\mp,\Pi_\mp\}\Pi_\pm].
\end{align*}
where we used $\lambda_+-\lambda_- =\langle \xi-A\rangle -A_0 - (-\langle \xi-A\rangle -A_0) = 2 \langle \xi-A\rangle $.

Collecting the results for $(a)$ and $(b)$ yields
\begin{align*}
    \Pi_j \partial_t W \Pi_j 
     &= \{\lambda_j,W_j\} - [W_j,[\Pi_j,\{\lambda,\Pi_j\}] \\
    &\qquad \mp \langle \xi-A\rangle [W_\pm,\Pi_\pm\{\Pi_\mp,\Pi_\mp\}\Pi_\pm].
\end{align*}
It remains to extract $\partial_t W_j$ from $\Pi_j \partial_t W \Pi_j$. But this is simply another application of Lemma \ref{eq:lemma derivation projection}, which yields
\begin{equation}
    \Pi_j  \partial_t W \Pi_j = \partial_t W_j - [W_j,[\Pi_j,\partial_t\Pi_j]].
\end{equation}
Using the definitions \eqref{eq:berry term} and \eqref{eq:poissonian curvature} for the Berry term $H_{\mathrm{be}}$ and the Poissoninan curvature $H_{\mathrm{pc}}$ yields
\begin{align}
    \partial_t W_j &= \{\lambda_j,W_j\} - [W_j,[\Pi_j,\{\lambda_j,\Pi_j\}-\partial_t\Pi_j] ] \nonumber\\ &\qquad \mp \langle \xi-A\rangle [W_\pm,\Pi_\pm\{\Pi_\mp,\Pi_\mp\}\Pi_\pm] \nonumber\\
    &= \{\lambda_j,W_j\} + [H_{\text{be}},W_j] + [H_{\text{pc}},W_j]. 
\end{align}
This completes the proof for \eqref{eq:relativistic Vlasov}.

Taking the trace in $\mathbb{C}^4$ in \eqref{eq:relativistic Vlasov} and denoting $f_{\pm} =\tr_{\mathbb{C}^4} W_\pm$ yields
\begin{equation}
    \partial_t f_{\pm} = \{\lambda_\pm,f_{\pm}\},\label{eq:relativistic Vlasov trace}
\end{equation}
The Poisson bracket $\{\lambda_{\pm},f_{\pm}\}$ is easily calculated:
\begin{align}
    \{\lambda_{\pm},f_{\pm}\} &= \nabla_x \lambda_{\pm} \cdot \nabla_{\xi} f_{\pm} - \nabla_{\xi}\lambda_{\pm} \cdot \nabla_x f_{\pm}\nonumber \\
    &= \pm\left(\frac{-\nabla_x A \cdot (\xi-A)}{\langle \xi-A\rangle} \cdot \nabla_{\xi} f_{\pm} - \frac{\xi-A}{\langle \xi-A\rangle} \cdot \nabla_x f_{\pm}\right) -\nabla A_0\cdot \nabla_{\xi} f_{\pm}.
\end{align}
The change of variables $v=\xi-A$ and $B=\nabla \times A$, $E=\nabla A_0 - \partial_t A$ yields the relativistic transport equation for the phase space density $f_+$ for electrons and $f_-$ for positrons respectively,
\begin{equation}
    f_{\pm} \pm \frac{v}{\langle v \rangle}\cdot \nabla_x f_{\pm} + (E\pm \frac{v}{\langle v \rangle} \times B) \cdot \nabla_v f_{\pm} = 0.
\end{equation}
For the densities $\rho^{\varepsilon}$ and $J^{\varepsilon}$ (i.e. zeroth order moments of $W^{\varepsilon}$ we have according to Theorem \ref{thm:main1}, 
\begin{align*}
    \rho^{\varepsilon}(t,x) \rightarrow \rho(t,x) = \int \tr_{\mathbb{C}^4}(W(x,\xi)) d\xi &= \int \tr_{\mathbb{C}^4}((\Pi_+ +\Pi_-)W)(t,x,\xi) d\xi \\
    &= \int \tr_{\mathbb{C}^4}(\Pi_+W)(t,x,\xi)d \xi +\int\tr_{\mathbb{C}^4}(\Pi_-W)(t,x,\xi) d\xi \\
    &= \int (f_++f_- )(t,x,v) dv
\end{align*}
for almost every $t$ narrowly on $\mathbb{R}^3$. Similarly, we can extract the relativistic velocity $\tfrac{v}{\langle v \rangle}$ with $v=\xi-A$ from $J^{\varepsilon}$, i.e.
\begin{align*}
    J^{\varepsilon}(t,x) \rightarrow J(t,x) &= \int \tr_{\mathbb{C}^4}(\alpha_k(\Pi_+ +\Pi_-)W(\Pi_+ +\Pi_-))(t,x,\xi) d\xi \\
    &= \int \tr_{\mathbb{C}^4}(\alpha_k(\Pi_+W\Pi_+)) (t,x,\xi)+\tr_{\mathbb{C}^4}(\alpha_k(\Pi_-W\Pi_-)) (t,x,\xi)d\xi \\
   &= \int \frac{v}{\langle v \rangle}\tr_{\mathbb{C}^4}( W\Pi_+)(t,x,\xi) - \frac{v}{\langle v \rangle}\tr_{\mathbb{C}^4}( W\Pi_-)(t,x,\xi) d\xi \\
   &= \int \frac{v}{\langle v \rangle} f_+(t,x,v) - \frac{v}{\langle v \rangle}f_-(t,x,v) dv
\end{align*}
for almost every $t$ narrowly on $\mathbb{R}^3$, where in the last step we used \eqref{eq: identity Pi alpha} and the cyclicity of the trace. This concludes the proof of Theorem \ref{thm:main2}.

\appendix

\section{Derivation of the Dirac-Wigner equation}
\label{subsection:Derivation of the Dirac-Wigner equation}

In this section we derive the Dirac-Wigner equation \eqref{eq:Wigner matrix equation_2} for the Wigner transform $W^{\varepsilon}$. To this end, we will write the Dirac-von Neumann equation \eqref{eq:Dirac equation} in the center of mass and relative variables \eqref{eq: weyl variables} and then take the Fourier transform in $y$. In order to arrive at an equation of the form \eqref{eq:Wigner matrix equation_2} we have to pay some attention to how to regroup the terms, which is not immediately obvious.

Let $R$ be the solution of the Dirac-von Neumann equation \eqref{eq:Dirac equation}, i.e. 
\begin{align*}
    i\varepsilon \partial_t R(X,Y) =& [\alpha\cdot(-i\varepsilon\nabla-A(\cdot))+\beta-A_0(\cdot),R](X,Y) \\
= &-i\varepsilon \partial_{X_k} \alpha_k R(X,Y) - i\varepsilon\partial_{Y_k} R(X,Y)\alpha_k \\ &- \left(A_k(X)\alpha_k R(X,Y)-A_k(Y) R(X,Y)\alpha_k\right) +[\beta,R(X,Y)] \\ &- \left(A_0(X) -A_0(Y) \right)R(X,Y).
\end{align*}
Now introduce the variables
\begin{align}
    x=\frac{X+Y}{2}, && y=\frac{X-Y}{\varepsilon},\label{eq: weyl variables}
\end{align}
such that
\begin{align}
    \partial_{X_k} = \frac{1}{2}\partial_{x_k} + \frac{1}{\varepsilon}\partial_{y_k}, && \partial_{Y_k} = \frac{1}{2}\partial_{x_k} - \frac{1}{\varepsilon}\partial_{y_k}.
\end{align} Denote $$\tilde{R}(x,y) = R(x+\frac{\varepsilon y}{2}, x-\frac{\varepsilon y}{2}).$$
Then 
\begin{align*}
    i\varepsilon \partial_t \tilde{R}(x,y) = &-i\varepsilon \left(\frac12 \partial_{x_k} + \frac{1}{\varepsilon}\partial_{y_k}\right)\alpha_k \tilde{R}(x,y) - i\varepsilon\left(\frac12 \partial_{x_k} - \frac{1}{\varepsilon}\partial_{y_k}\right)\tilde{R}(x,y)\alpha_k \\ &- \left(A_k(x+\frac{\varepsilon y}{2})\alpha_k \tilde{R}(x,y)-A_k(x-\frac{\varepsilon y}{2}) \tilde{R}(x,y)\alpha_k\right) +[\beta,\tilde{R}(x,y)] \\
    &-\left(A_0(x+\frac{\varepsilon y}{2})-A_0(x-\frac{\varepsilon y}{2})\right) \tilde{R}(x,y).
\end{align*}
Dividing by $i\varepsilon$ yields
\begin{align*}
    \partial_t \tilde{R}(x,y) = &\frac{1}{i\varepsilon}\left((-i\partial_{y_k})[\alpha_k,\tilde{R}(x,y)] + [\beta,\tilde{R}(x,y)]\right)  -\frac{1}{2}\partial_{x_k}[\alpha_k, \tilde{R}(x,y)]_+ \\ &\qquad - \frac{1}{i\varepsilon}\left(A_k(x+\frac{\varepsilon y}{2})\alpha_k \tilde{R}(x,y)-A_k(x-\frac{\varepsilon y}{2}) \tilde{R}(x,y)\alpha_k\right) \\ &\qquad -\frac{1}{i\varepsilon} \left(A_0(x+\frac{\varepsilon y}{2})-A_0(x-\frac{\varepsilon y}{2})\right) \tilde{R}(x,y). 
\end{align*}
The trick is now to regroup the terms involving $A_k(x\pm \tfrac{\varepsilon y}{2})$ in the following way:
\begin{align*}
& \frac{1}{i\varepsilon}\left(A_k(x+\frac{\varepsilon y}{2})\alpha_k \tilde{R}(x,y)-A_k(x-\frac{\varepsilon y}{2}) \tilde{R}(x,y)\alpha_k\right) \\
    &\qquad =  \frac{1}{i\varepsilon}\left(\frac{A_k(x+\frac{\varepsilon y}{2})+A_k(x-\frac{\varepsilon y}{2})}{2}\right)[\alpha_k,\tilde{R}(x,y)] \\ &\qquad \quad +\frac{1}{2}\left(\frac{A_k(x+\frac{\varepsilon y}{2})-A_k(x-\frac{\varepsilon y}{2})}{i\varepsilon}\right)[\alpha_k, \tilde{R}(x,y)]_+.
\end{align*}
Therefore,
\begin{align*}
    \partial_t \tilde{R}(x,y) = &\frac{1}{i\varepsilon}\left((-i\partial_{y_k})[\alpha_k,\tilde{R}(x,y)] + [\beta,\tilde{R}(x,y)]\right)   -\frac{1}{2}\partial_{x_k}[\alpha_k, \tilde{R}(x,y)]_+ \\ &\qquad -  \frac{1}{i\varepsilon}\left(\frac{A_k(x+\frac{\varepsilon y}{2})+A_k(x-\frac{\varepsilon y}{2})}{2}\right)[\alpha_k,\tilde{R}(x,y)] \\ &\qquad -\frac{1}{2}\left(\frac{A_k(x+\frac{\varepsilon y}{2})-A_k(x-\frac{\varepsilon y}{2})}{i\varepsilon}\right)[\alpha_k, \tilde{R}(x,y)]_+\\ &\qquad -\left(\frac{A_0(x+\frac{\varepsilon y}{2})-A_0(x-\frac{\varepsilon y}{2})}{i\varepsilon}\right) \tilde{R}(x,y).
\end{align*}
Taking the Fourier transform in $y$ yields the following preliminary form of the Wigner equation,
\begin{align}
    \partial_t W^{\varepsilon} = &\frac{1}{i\varepsilon}[\alpha \cdot \xi + \beta,W^{\varepsilon}]  +\frac{1}{2} \left( \left\{ \alpha\cdot \xi + \beta, W^{\varepsilon}\right\}-\left\{ W^{\varepsilon},\alpha\cdot \xi + \beta\right\} \right) \nonumber \\ &- \frac{1}{i\varepsilon}\tau[A_k][\alpha_k,W^{\varepsilon}] -\frac{1}{2}\theta[A_k][\alpha_k,W^{\varepsilon}] - \theta[A_0] W^{\varepsilon}.
    \label{eq:preliminary wigner dirac}
\end{align}
where $\tau$ and $\theta$ are the pseudo-differential operators defined as
\begin{equation}
    (\theta[g]f)(x,\xi) := \int_{\mathbb{R}^6} \frac{1}{i\varepsilon}(g(x+\frac{\varepsilon y}{2})-g(x-\frac{\varepsilon y}{2}))f(x,\eta) e^{-i (\xi-\eta)\cdot y} d \eta d y, 
\end{equation}
cf. \eqref{eq:PDO} and
\begin{equation}
    (\tau[g]f)(x,\xi)  := \int_{\mathbb{R}^6} \frac{1}{2}(g(x+\frac{\varepsilon y}{2})+g(x-\frac{\varepsilon y}{2}))f(x,\eta) e^{-i (\xi-\eta)\cdot y} d \eta d y. 
\end{equation}
In order to transform equation \eqref{eq:preliminary wigner dirac} into an equation of the form \eqref{eq:Wigner matrix equation_2} we add and subtract $-\tfrac{1}{i\varepsilon} [\alpha_k A_k(x),W^{\varepsilon}]$ as well as $\tfrac{1}{2} \nabla_x A_k \cdot \nabla_{\xi} (\alpha_k W^{\varepsilon} + W^{\varepsilon}\alpha_k)$ and $\nabla A_0  \cdot \nabla_{\xi} W^{\varepsilon}$. Moreover, we define the second order finite difference pseudodifferential operator operator
\begin{align}
    (\Delta[g]f)(x,\xi) &= \frac{1}{i\varepsilon^2}((\tau[g]-g)f)(x,\xi)  \\ &= \frac{1}{2i}\int_{\mathbb{R}^6} \frac{g(x+\frac{\varepsilon y}{2})-2g(x)+g(x-\frac{\varepsilon y}{2})}{\varepsilon^2}f(x,\eta) e^{-i(\xi-\eta)\cdot y} d\eta d y,
\end{align}
cf. \eqref{eq:PDO_2}. Also observe that $[A_0,W^{\varepsilon}]= 0$ since $A_0$ is a scalar. This yields the {Dirac-Wigner equation} \eqref{eq:Wigner matrix equation_2}, i.e.
\begin{align}
\partial_t W^{\varepsilon} &= \frac{1}{i\varepsilon}[P,W^{\varepsilon}]  +\frac{1}{2} \left( \left\{ P, W^{\varepsilon}\right\}-\left\{ W^{\varepsilon},P\right\} \right)  +  r^{\varepsilon},\\
P &= \alpha \cdot (\xi-A) + \beta-A_0,
\end{align}
with the remainder $r^{\varepsilon}$ given by
\begin{align}
r^{\varepsilon} 
&= \varepsilon \Delta[A_k][\alpha_k,W^{\varepsilon}] +\frac{1}{2}(-\nabla_x A_k\cdot \nabla_{\xi}-\theta[A_k])[\alpha_k, W^{\varepsilon}]_+ 
 -(-\nabla A_0 \cdot \nabla_{\xi} -\theta [A_0])W^{\varepsilon}. 
\end{align}

\section{Explicit calculations}
In the following we set
\begin{align}
    v(t,x,\xi) &= \xi-A(t,x), & P_0(t,x,\xi) &= \alpha\cdot v(t,x,\xi)+\beta, \\
    \lambda_\pm(t,x,\xi) &= \pm \langle v(t,x,\xi) \rangle -A_0(t,x), & \Pi_\pm(t,x,\xi) &= \frac{1}{2}\left( I_4  \pm \frac{P_0(t,x,\xi)}{\langle v(t,x,\xi) \rangle}\right),
\end{align}
and
\begin{equation*}
{\varepsilon}_{km\ell}B_{\ell} = \partial_k A_m-\partial_m A_k.
\end{equation*}

\subsection{Dirac matrices}
\label{sec: identities gamma}
Denote
\begin{equation}
    \gamma^5 = -i \alpha_1\alpha_2\alpha_3.
\end{equation}
Then we can write the product of two Dirac matrices as
\begin{equation}
    \alpha_j\alpha_k = \delta_{jk}I_4+ i \gamma^5 \varepsilon_{jkm}\alpha_m.
    \label{eq:product Dirac matrices}
\end{equation}
Then we have
\begin{align}
    [\alpha_k,\alpha_m] &= 2(\delta_{km}-\alpha_m\alpha_k) = 2i\gamma^5 \varepsilon_{kmj}\alpha_j,
    \label{eq: identity commutator alpha alpha} \\
    [\beta,\alpha_m] &= 2\beta \alpha_m.
    \label{eq: identity commutator beta alpha} 
\end{align}
Using \eqref{eq: identity commutator alpha alpha} we deduce that
\begin{align}
    [\alpha \cdot a, \alpha \cdot b] 
    &=2i\gamma^5 \alpha \cdot (a \times b).
    \label{eq: identity gamma 5}
\end{align}
Using \eqref{eq: identity commutator alpha alpha}, \eqref{eq: identity commutator beta alpha} and \eqref{eq: identity gamma 5} it follows that
\begin{align}
    [P_0,\alpha \cdot a] &= [\alpha \cdot v + \beta, \alpha \cdot a] \nonumber \\
    &= 2i\gamma^5 \alpha \cdot (v\times a) + 2\beta \alpha \cdot a.
    \label{eq: identity commutator P_0 alpha} 
\end{align}
Using $\alpha_m \alpha_k = -\alpha_k \alpha_m + 2\delta_{km}$ and $\alpha_j \beta + \beta \alpha_j = 0$ we have
\begin{align}
\Pi_\pm \alpha &= \alpha \Pi_\mp  \pm\frac{v}{\langle v \rangle},\label{eq: identity Pi alpha} \\
\alpha \Pi_\pm &= \Pi_\mp \alpha  \pm\frac{v}{\langle v \rangle}. \label{eq: identity alpha Pi}
\end{align}
\subsection{Derivatives and Poisson brackets}

We have the following identities.
\begin{align}
    \partial_{x_k} \langle v \rangle &= \frac{-v\cdot \partial_{x_k}A}{\langle v \rangle}, & \partial_{\xi_k} \langle v \rangle &= \frac{v_k}{\langle v \rangle}, \\
    \partial_{x_k} P_0 &= -\alpha \cdot \partial_{x_k} A, & \partial_{\xi_k} P_0 &= \alpha_k.
\end{align}
For $\lambda_\pm$ we have
\begin{align}
\partial_{x_k}\lambda_\pm &= \mp \frac{\partial_{x_k} A\cdot v }{\langle v \rangle} -\partial_{x_k}A_0,   \\
\partial_{\xi_k}\lambda_j &= \pm\frac{v_k}{\langle v \rangle}.
\end{align}
For $\Pi_\pm$ we have
\begin{align}\label{eq:derivative Pi x}
\partial_{x_k} \Pi_\pm &= \pm\frac{1}{2}\left(- \frac{\alpha \cdot \partial_{x_k}A}{\langle v \rangle} + \frac{P_0( v \cdot \partial_{x_k}A)}{\langle v \rangle^3}\right), \\
\partial_{\xi_k} \Pi_\pm &= \pm\frac{1}{2}\left(\frac{\alpha_k}{\langle v \rangle} - \frac{P_0 v_k}{\langle v \rangle^3}\right).\label{eq:derivative Pi xi}
\end{align}
Then we have for the Poisson brackets
\begin{equation}
    \{\lambda_\pm,\Pi_\pm\} = -\frac{1}{2\langle v \rangle}{\alpha \cdot (\frac{v}{\langle v \rangle}\times B)} \mp \frac{1}{2\langle v \rangle} \left( {\alpha \cdot \nabla A_0}- \frac{P_0 v \cdot \nabla A_0}{\langle v \rangle^2}\right).
    \label{eq: explicit formula Poisson bracket lambda Pi}
\end{equation}
and
\begin{align}
    \{\Pi_\pm,\Pi_\pm\} &= \frac{1}{4}\left\{\frac{P_0}{\langle v \rangle},\frac{P_0}{\langle v \rangle}\right\} \nonumber \\
    &= \frac{1}{4\langle v\rangle^2}\{P_0,P_0\}+\frac{1}{4\langle v\rangle}\left[\{ P_0,\frac{1}{\langle v \rangle} \},{P_0}\right]\nonumber \\ &=\frac{1}{4\langle v\rangle^2}\{\alpha \cdot v, \alpha \cdot v \} +\frac{1}{4\langle v\rangle}\left[\{ P_0,\frac{1}{\langle v \rangle} \},{P}\right] 
    \nonumber \\
    &= \frac{1}{4\langle v \rangle^2}\alpha \cdot (\alpha \times B) +\frac{1}{4\langle v\rangle}\left[\{ P_0,\frac{1}{\langle v \rangle} \},{P}\right]
    \nonumber. 
\end{align}
Now using $\alpha\cdot (\alpha \times B) = 2i\gamma^5 \alpha \cdot B$,
\begin{align}
    \{\Pi_\pm,\Pi_\pm\} &=  \frac{1}{2\langle v \rangle^2}\left(i\gamma ^5 \alpha  \cdot B +\frac{1}{4\langle v\rangle}\left[\{ P_0,\frac{1}{\langle v \rangle} \},{P}\right] 
    \right).
    \label{eq: explicit formula Poisson bracket Pi Pi}
\end{align}
For the time derivative of $\Pi_j$ we have
\begin{equation}
    \partial_t \Pi_\pm = \mp\frac{1}{2\langle v \rangle}\left(\alpha \cdot \partial_tA - \frac{P_0 v\cdot \partial_t A}{\langle v \rangle^2}\right).
    \label{eq: explicit formula time derivative Pi}
\end{equation}

\subsection{Berry term}
\label{sec:berry}
The Berry term \eqref{eq:berry term} is given by 
\begin{equation}
    H_{\text{be}} := [\Pi_\pm,\{\lambda_\pm, \Pi_\pm\}-\partial_t\Pi_\pm] 
\end{equation}
For the first term we have, using \eqref{eq: identity commutator P_0 alpha},\eqref{eq: explicit formula Poisson bracket lambda Pi}, $[I_4, M]=0$ for any matrix $M$ and $[P_0,P_0]=0$,
\begin{align*}
    \left[\Pi_\pm,\{\lambda_\pm, \Pi_\pm\}\right] &= \left[\pm\frac{1}{2}\frac{P_0}{\langle v \rangle}, -\frac{1}{2\langle v \rangle}{\alpha \cdot (\frac{v}{\langle v \rangle}\times B)} \mp \frac{1}{2\langle v \rangle} \left( {\alpha \cdot \nabla A_0}- \frac{P_0 v \cdot \nabla A_0}{\langle v \rangle^2}\right)\right] \\ &=-\frac{1}{4\langle v \rangle^2}\left(\pm{(\frac{v}{\langle v \rangle}\times B)_m+\nabla_m A_0} \right)[P_0, \alpha_m] \\
    &= -\frac{1}{2\langle v \rangle^2}\beta\alpha\cdot\left(\pm{(\frac{v}{\langle v \rangle}\times B)+\nabla A_0} \right) \\
    &\qquad  {+} \frac{i}{2\langle v \rangle^2} \gamma^5 \alpha \cdot \left( v \times \left(\pm{(\frac{v}{\langle v \rangle}\times B)+\nabla A_0} \right)\right).
\end{align*}
A similar reasoning for $\partial_t \Pi_\pm$ yields, using \eqref{eq: explicit formula time derivative Pi},
\begin{align*}
    \left[\Pi_\pm, \partial_t \Pi_\pm\right] &= \left[\frac{1}{2}\pm\frac{P_0}{\langle v \rangle},\mp\frac{1}{2\langle v \rangle}\left(\alpha \cdot \partial_tA - \frac{P_0 v\cdot \partial_t A}{\langle v \rangle^2}\right)\right] \\
    &= -\frac{1}{4\langle v \rangle^2}\partial_tA_m\left[{P_0},\alpha_m  \right]\\
    &= -\frac{1}{2\langle v \rangle^2}\left(\beta \alpha \cdot \partial_t A {-}  i\gamma^5\alpha \cdot (v\times \partial_t A)\right).
\end{align*}
Using $E= \nabla A_0 -\partial_t A$ we have
\begin{equation}
    H_{\text{be}} = -\frac{1}{2\langle v \rangle^2}(\beta\alpha\cdot F^{\mathrm{L}}_{\pm} {-} i\gamma^5 \alpha \cdot (v\times F^{\mathrm{L}}_{\pm})),
\end{equation}
where 
\begin{equation}
    F^{\mathrm{L}}_{\pm} := E  \pm(\frac{v}{\langle v \rangle}\times B),
\end{equation}
is the Lorentz force for electrons ($+$) and positrons ($-$), respectively.
\subsection{Poissonian curvature}
\label{sec:poissonian curvature}

The Poissonian curvature \eqref{eq:poissonian curvature} is given by
\begin{align*}
     H_{\mathrm{pc}}^{\pm}  
    &= \pm\langle v \rangle \Pi_\pm \{\Pi_{\mp},\Pi_{\mp}\} \Pi_{\pm}
\end{align*}
Thus it remains to calculate $\{\Pi_+,\Pi_+\}$, which was done in \eqref{eq: explicit formula Poisson bracket Pi Pi}.
Since $$ \Pi_{\pm}\left[\{ P_0,\frac{1}{\langle v \rangle} \},{P}\right] \Pi_{\pm} = \Pi_{\pm}\left[\{ P_0,\frac{1}{\langle v \rangle} \},\lambda_+\Pi_+ + \lambda_- \Pi_-\right] \Pi_{\pm}= 0,$$
we obtain
\begin{equation}
H^{\pm}_{\mathrm{pc}} = \pm  \frac{1}{2\langle v \rangle}\Pi_{\pm}\left(i\gamma ^5 \alpha  \cdot B\right) \Pi_{\pm}.
\end{equation}
\section{Wigner matrices}

We recall here the theory of matrix-valued Wigner measures. They were treated in \cite{gerard1997homogenization}, based on the treatment of scalar Wigner measures in \cite{lions1993mesures}. \\

Let $\{R^{\varepsilon}\} \subset \textfrak{S}^1(\mathfrak{H})$ be a family of matrix-valued density operators on $\mathfrak{H}= L^2(\mathbb{R}^d, {\mathbb{C}}^m)$, indexed by $\varepsilon$. Let $W^{\varepsilon}$ be the corresponding family of matrix-valued Wigner transforms. Set
\begin{equation}
    \widehat{R}(\xi) \equiv \widehat{R}(\xi,\xi) := \int e^{-i\xi\cdot(x-y)} R(x,y) dx dy.
\end{equation}

\begin{definition}
    A family of measures $\mu^{\varepsilon}$ on $\mathbb{R}^d$, indexed by $\varepsilon \in (0,1]$, is \emph{tight} if,
    \begin{equation}
        \sup_{0<\varepsilon \leq 1}\int_{|x|\geq M} \mu^{\varepsilon}(dx) \rightarrow 0 \quad \text{as } M\rightarrow \infty.
    \end{equation}
\end{definition}  
Some authors define a
family $\{\psi_{\varepsilon} \colon \varepsilon \in (0,1]\}$ as being \emph{compact at infinity} if the measure with density $\mu^{\varepsilon} = |\psi_{\varepsilon}|^2$
(with respect to the Lebesgue measure on $\mathbb{R}^d$) is tight. The term \emph{$\varepsilon$-oscillatory} is used for the tightness of $\varepsilon^{-d}\widehat{\mu}(\varepsilon^{-1}\xi)$.
\begin{theorem}[\cite{gerard1997homogenization, lions1993mesures}]
\label{thm:wigner measures}
    The Wigner transform $W^{\varepsilon}$ has the following properties:
    \begin{enumerate}
        \item The family $\{W^{\varepsilon}\colon 0 < \varepsilon \leq 1\}$ is bounded in $\mathcal{S}'(\mathbb{R}_x^d\times \mathbb{R}^d_{\xi},\mathbb{M}_m(\mathbb{C}))$,  in particular there
exists a subsequence $\varepsilon_k$ such that $W^{\varepsilon_k}$ converges to the bounded matrix-valued Wigner measure ("Wigner matrix") $W=W^{\ast}\geq 0$ in
$\mathcal{S}'(\mathbb{R}_x^d\times \mathbb{R}^d_{\xi},\mathbb{M}_m(\mathbb{C}))$ as $\varepsilon_k\rightarrow 0$. The scalar measure $w:= \tr_{\mathbb{C}^m}(W)$ is a positive and bounded Radon measure on $\mathbb{R}^d_x\times \mathbb{R}^d_{\xi}$. 
\item
If the diagonal $R^{\varepsilon}(x) = \sum_{j\geq 1} \lambda_j^{\varepsilon}\psi_j^{\varepsilon}(x)\overline{\psi^{\varepsilon}_j(x)}^\top$ converges to $\nu$ in the sense of matrix-valued measures on $\mathbb{R}_x^d$, then
\begin{equation}
    \int_{\mathbb{R}^d} W(\cdot,d\xi) \leq \nu.
    \end{equation}
Equality holds if in addtion $\varepsilon^{-d}\widehat{R^{\varepsilon}}(\xi/\varepsilon)$ is tight.
\item We have
\begin{equation}
    \int w(dx,d\xi) \leq \limsup_{\varepsilon\rightarrow 0} \tr(R^{\varepsilon}).
\end{equation}
Equality holds if in addition $R^{\varepsilon}(x)$ and $\varepsilon^{-d}\widehat{R^{\varepsilon}}(\xi/\varepsilon)$ are tight.
\end{enumerate}
\end{theorem}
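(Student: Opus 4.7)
The plan is to follow the strategy developed in \cite{lions1993mesures} for scalar Wigner measures and adapted to the matrix-valued setting in \cite{gerard1997homogenization}. The starting point is the duality identity
\begin{equation*}
\langle W^{\varepsilon}[R^{\varepsilon}], \Phi\rangle \;=\; \tr_{\mathfrak{H}}\bigl(R^{\varepsilon}\,\mathrm{Op}^W_{\varepsilon}(\Phi)\bigr),
\end{equation*}
valid for $\Phi \in \mathcal{S}(\mathbb{R}^d_x\times\mathbb{R}^d_{\xi},\mathbb{M}_m(\mathbb{C}))$, where $\mathrm{Op}^W_{\varepsilon}$ denotes the $\varepsilon$-Weyl quantization. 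By a matrix-valued version of the Calderón--Vaillancourt theorem, $\mathrm{Op}^W_{\varepsilon}(\Phi)$ is bounded on $\mathfrak{H}$ with operator norm controlled by a Schwartz seminorm of $\Phi$ uniformly in $\varepsilon$. Combined with $\tr_{\mathfrak{H}}(R^{\varepsilon})=1$, this gives uniform boundedness of $W^{\varepsilon}$ in $\mathcal{S}'$, and Banach--Alaoglu then extracts a subsequence converging weakly$^*$ to a limit $W$. The relation $W=W^*$ passes to the limit from the identity $W^{\varepsilon}=(W^{\varepsilon})^*$, which itself follows from $R^{\varepsilon}=(R^{\varepsilon})^*$ via the definition \eqref{eq:definition Wigner transform with spin}.

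For the non-negativity $W\ge 0$, the idea is to smooth $W^{\varepsilon}$ by convolution with the Gaussian $G^{\varepsilon}(x,\xi)=(\pi\varepsilon)^{-d}\exp(-(|x|^2+|\xi|^2)/\varepsilon)$ to produce the Husimi matrix
\begin{equation*}
\widetilde W^{\varepsilon}(x,\xi) \;=\; (\pi\varepsilon)^{-d}\bigl\langle R^{\varepsilon}\phi^{\varepsilon}_{x,\xi},\phi^{\varepsilon}_{x,\xi}\bigr\rangle_{\mathbb{C}^m},
\end{equation*}
with a coherent state $\phi^{\varepsilon}_{x,\xi}$; this is a pointwise non-negative Hermitian matrix because $R^{\varepsilon}\ge 0$. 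Since $G^{\varepsilon}\to\delta_{(0,0)}$ in $\mathcal{S}'$, the Husimi and Wigner transforms share the same weak$^*$ limit, so $W\ge 0$. Testing $\widetilde W^{\varepsilon}$ against nonnegative compactly supported functions and passing to the limit then shows that $w=\tr_{\mathbb{C}^m}(W)$ is a positive Radon measure, bounded by $\limsup_{\varepsilon\to 0}\tr_{\mathfrak{H}}(R^{\varepsilon})$.

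To prove item (2), I would test against $\Phi(x,\xi)=\Psi(x)\chi_R(\xi)I_m$ with $\chi_R\in C_c(\mathbb{R}^d_{\xi})$ satisfying $0\le\chi_R\le 1$ and $\chi_R\uparrow 1$. On the one hand,
\begin{equation*}
\int\chi_R(\xi)\Psi(x)\,\tr_{\mathbb{C}^m}W^{\varepsilon}(x,\xi)\,dx\,d\xi \;\le\; \int\Psi(x)\,\tr_{\mathbb{C}^m}R^{\varepsilon}(x,x)\,dx,
\end{equation*}
since integrating $W^{\varepsilon}$ in $\xi$ (against $1$) exactly reproduces $\tr_{\mathbb{C}^m}R^{\varepsilon}(x,x)$ by Fourier inversion at $y=0$. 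Passing to the weak$^*$ limit on the left and using $R^{\varepsilon}(x,x)\rightharpoonup\nu$ on the right, then letting $R\to\infty$ by monotone convergence, gives $\int_{\mathbb{R}^d_{\xi}}W(\cdot,d\xi)\le\nu$. Equality requires excluding escape of mass to $|\xi|=\infty$, which is precisely what the $\varepsilon$-oscillation hypothesis (tightness of $\varepsilon^{-d}\widehat{R^{\varepsilon}}(\xi/\varepsilon)$) guarantees, via the operator version of the Plancherel identity. Item (3) is obtained by the same cut-off strategy applied simultaneously in $x$ and $\xi$, using spatial tightness of $R^{\varepsilon}(x,x)$ to control escape in position.

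The main obstacle is the handling of mass at high frequency: the variable $\xi$ in the Wigner transform parametrizes momenta at the scale $\varepsilon$ in Fourier space, so without the $\varepsilon$-oscillation assumption one may lose mass to $|\xi|\to\infty$ in the limit. Establishing that the tightness hypothesis is exactly what is needed to prevent this, via a careful analysis of $\widehat{R^{\varepsilon}}(\eta)$ for $|\eta|\gg 1/\varepsilon$, is the technical core of the argument; the remaining steps are routine applications of Calderón--Vaillancourt, Husimi smoothing, and weak$^*$ compactness.
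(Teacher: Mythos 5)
This theorem is not proved in the paper: it is stated as a background result and attributed to \cite{gerard1997homogenization,lions1993mesures} in the theorem header, so there is no in-paper argument to compare against. Your proposal is a reasonable reconstruction of the Lions--Paul / G\'erard--Markowich--Mauser--Poupaud strategy (Weyl-quantization duality, Calder\'on--Vaillancourt, Husimi smoothing, cut-offs), and items (1) and the Husimi argument for $W=W^*\ge 0$ are along the right lines.

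However, there is a genuine gap in your treatment of item (2). You assert, at finite $\varepsilon$, the inequality
\begin{equation*}
\int \chi_R(\xi)\Psi(x)\,\tr_{\mathbb{C}^m} W^{\varepsilon}(x,\xi)\,dx\,d\xi \;\le\; \int \Psi(x)\,\tr_{\mathbb{C}^m} R^{\varepsilon}(x,x)\,dx,
\end{equation*}
and justify it by saying that integrating $W^{\varepsilon}$ against $1$ in $\xi$ reproduces $\tr R^{\varepsilon}(x,x)$. That identity is correct (formally), but it does \emph{not} yield the inequality: $\tr_{\mathbb{C}^m} W^{\varepsilon}$ is not pointwise nonnegative, so multiplying by $0\le \chi_R\le 1$ does not decrease the integral. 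The missing term is
\begin{equation*}
\int (1-\chi_R(\xi))\Psi(x)\,\tr_{\mathbb{C}^m} W^{\varepsilon}(x,\xi)\,dx\,d\xi \;=\; \tr_{\mathfrak H}\bigl(R^{\varepsilon}\,\mathrm{Op}^W_{\varepsilon}(\Psi\otimes(1-\chi_R)\,I_m)\bigr),
\end{equation*}
and even though the symbol $\Psi(x)(1-\chi_R(\xi))\ge 0$, its Weyl quantization is not a nonnegative operator. The standard fix is precisely the Husimi device you already invoked for positivity: replace $W^{\varepsilon}$ by $\widetilde W^{\varepsilon}\ge 0$, for which the cut-off inequality is exact (the right side then involves a mollified diagonal, which converges to $\nu$), and then pass to the common weak$^*$ limit. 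Alternatively one invokes the sharp G\aa rding (or Fefferman--Phong) inequality to get $\mathrm{Op}^W_{\varepsilon}(\Psi\otimes(1-\chi_R)\,I_m)\ge -C\varepsilon$, so the inequality holds up to an $O(\varepsilon)$ error that vanishes in the limit. As written, your argument appeals to a pointwise monotonicity of $W^{\varepsilon}$ that is false, and this is the technical heart of item (2). The same caveat propagates to your item (3) argument, which relies on the same cut-off inequality. Once you route item (2) through the Husimi transform (or sharp G\aa rding), the rest of the proposal is consistent with the classical references.
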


\section*{Acknowledgement}
N.J.M. and J.M. acknowledge financial support from the Austrian Science Fund (FWF) via the SFB project 10.55776/F65, and the Schrödinger grant 10.55776/J4840.

F.G., N.L. and J.M. acknowledge the hospitality and support of the Wolfgang Pauli Institute Vienna.

J.M. acknowledges the hospitality of Constructor University, Bremen.

C.S. acknowledges the support of the Swiss National Science Foundation through
the NCCR SwissMAP, the SNSF Eccellenza project PCEFP2\_181153, and the Swiss State Secretariat for Research and Innovation through the ERC
Starting Grant project P.530.1016 (AEQUA).


\bibliographystyle{abbrv}
\bibliography{dirac.bib}

@article{GS2014,
	abstract = {This paper is devoted to a comparison of early works of Kato and Yosida on the integration of non-autonomous linear evolution equations {\$}{$\backslash$}dot {\{}x{\}} = A(t)x{\$}in Banach space, where the domain D of A(t) is independent of t. Our focus is on the regularity assumed of t↦A(t) and our main objective is to clarify the meaning of the rather involved set of assumptions given in Yosida's classic and highly influential Functional Analysis. We prove Yosida's assumptions to be equivalent to Kato's condition that t↦A(t)x is continuously differentiable for each x∈D.},
	author = {Schmid, Jochen and Griesemer, Marcel},
	date = {2014/12/01},
	date-added = {2025-12-16 17:22:21 +0100},
	date-modified = {2025-12-16 17:22:21 +0100},
	doi = {10.1007/s11040-014-9154-5},
	id = {Schmid2014},
	isbn = {1572-9656},
	journal = {Mathematical Physics, Analysis and Geometry},
	number = {3},
	pages = {265--271},
	title = {Kato's Theorem on the Integration of Non-Autonomous Linear Evolution Equations},
	url = {https://doi.org/10.1007/s11040-014-9154-5},
	volume = {17},
	year = {2014},
	bdsk-url-1 = {https://doi.org/10.1007/s11040-014-9154-5}}

@book{ReedSimon1975,
  title = {Methods of Modern Mathematical Physics. Vol. II: Fourier Analysis, Self-Adjointness},
  author = {Reed, Michael and Simon, Barry},
  publisher = {Academic Press},
  address = {New York},
  year = {1975},
  series = {Methods of Modern Mathematical Physics},
  volume = {2},
  doi = {10.1016/C2013-0-05370-X}}

@article {emmrich1996geometry,
    AUTHOR = {Emmrich, C. and Weinstein, A.},
     TITLE = {Geometry of the transport equation in multicomponent {WKB}
              approximations},
   JOURNAL = {Comm. Math. Phys.},
  FJOURNAL = {Communications in Mathematical Physics},
    VOLUME = {176},
      YEAR = {1996},
    NUMBER = {3},
     PAGES = {701--711},
      ISSN = {0010-3616,1432-0916},
   MRCLASS = {81Q20 (58F06 82C70)},
  MRNUMBER = {1376438},
MRREVIEWER = {Markus\ J.\ Pflaum},
       URL = {http://projecteuclid.org/euclid.cmp/1104286121},
}

@article {littlejohn1991geometric,
    AUTHOR = {Littlejohn, Robert G. and Flynn, William G.},
     TITLE = {Geometric phases in the asymptotic theory of coupled wave
              equations},
   JOURNAL = {Phys. Rev. A (3)},
  FJOURNAL = {Physical Review. A. Third Series},
    VOLUME = {44},
      YEAR = {1991},
    NUMBER = {8},
     PAGES = {5239--5256},
      ISSN = {1050-2947,1094-1622},
   MRCLASS = {81Q20 (81S99)},
  MRNUMBER = {1133847},
       DOI = {10.1103/PhysRevA.44.5239},
       URL = {https://doi.org/10.1103/PhysRevA.44.5239},
}

@article{gerard1997homogenization,
  title={Homogenization limits and {W}igner transforms},
  author={G{\'e}rard, Patrick and Markowich, Peter A and Mauser, Norbert J and Poupaud, Fr{\'e}d{\'e}ric},
  journal={Comm. Pure Appl. Math.},
  volume={50},
  number={4},
  pages={323--379},
  year={1997},
  publisher={Wiley Online Library}
}

@article{lions1993mesures,
  title={Sur les mesures de {W}igner},
  author={Lions, Pierre-Louis and Paul, Thierry},
  journal={Rev. Mat. Iberoamericana},
  volume={9},
  number={3},
  pages={553--618},
  year={1993}
}

@article{markowich1993classical,
  title={The classical limit of a self-consistent quantum-{V}lasov equation in 3{D}},
  author={Markowich, Peter A and Mauser, Norbert J},
  journal={Math. Mod. Meth. Appl. Sc.},
  volume={3},
  number={01},
  pages={109--124},
  year={1993},
  publisher={World Scientific}
}

@article{mauser2007convergence,
  title={Convergence of the {D}irac--{M}axwell system to the {V}lasov--{P}oisson system},
  author={Mauser, Norbert J and Selberg, Sigmund},
  journal={Comm. Part. Diff. Eq.},
  volume={32},
  number={3},
  pages={503--524},
  year={2007},
  publisher={Taylor \& Francis}
}

@article{spohn2000semiclassical,
  title={Semiclassical limit of the {D}irac equation and spin precession},
  author={Spohn, Herbert},
  journal={Ann. Phys.},
  volume={282},
  number={2},
  pages={420--431},
  year={2000},
  publisher={Elsevier}
}

@article{masmoudi2003nonrelativistic,
  title={Nonrelativistic limit from {M}axwell-{K}lein-{G}ordon and {M}axwell-{D}irac to {P}oisson-{S}chr{\"o}dinger},
  author={Masmoudi, Nader and Nakanishi, Kenji},
  journal={Int. Math. Res. Not.},
  volume={2003},
  number={13},
  pages={697--734},
  year={2003},
  publisher={Hindawi Publishing Corporation}
}

@article{bechouche2005asymptotic,
  title={On the {A}symptotic {A}nalysis of the {D}irac--{M}axwell {S}ystem in the {N}onrelativistic {L}imit},
  author={Bechouche, Philippe and Mauser, Norbert J and Selberg, Sigmund},
  journal={J. Hyp. Diff. Eq.},
  volume={2},
  number={01},
  pages={129--182},
  year={2005},
  publisher={World Scientific}
}

@article{bechouche1998semi,
  title={({S}emi)-{N}onrelativistic {L}imits of the {D}irac {E}quation with {E}xternal {T}ime-{D}ependent {E}lectromagnetic {F}ield},
  author={Bechouche, Philippe and Mauser, Norbert J and Poupaud, Frederic},
  journal={Comm. Math. Phys.},
  volume={197},
  number={2},
  pages={405--425},
  year={1998},
  publisher={Springer}
}

@book{itzykson2012quantum,
  title={Quantum field theory},
  author={Itzykson, Claude and Zuber, Jean-Bernard},
  year={2012},
  publisher={Courier Corporation}
}

@book{bouchut2000kinetic,
  title={Kinetic {E}quations and {A}symptotic {T}heory},
  author={Bouchut, Fran{\c{c}}ois and Golse, Fran{\c{c}}ois and Pulvirenti, Mario},
  year={2000},
  publisher={Gauthier-Villars, \'Editions scientifiques et m\'edicales Elsevier}
}

@book{hormander1997lectures,
  title={Lectures on {N}onlinear {H}yperbolic {D}ifferential {E}quations},
  author={H{\"o}rmander, Lars},
  volume={26},
  year={1997},
  publisher={Springer Science \& Business Media}
}

@book{cartan1971differential,
  title={Differential {C}alculus},
  author={Cartan, Henri},
  year={1971},
  publisher={Hermann, Paris, and Kershaw Publishing Company Ltd, London}
}

@article{bargmann1959precession,
  title={Precession of the {P}olarization of {P}articles {M}oving in a {H}omogeneous {E}lectromagnetic {F}ield},
  author={Bargmann, Valentine and Michel, Louis and Telegdi, VL},
  journal={Phys. Rev. Lett.},
  volume={2},
  number={10},
  pages={435},
  year={1959},
  publisher={APS}
}

@article{uhlenbeck1925ersetzung,
  title={Ersetzung der {H}ypothese vom unmechanischen {Z}wang durch eine {F}orderung bez{\"u}glich des inneren {V}erhaltens jedes einzelnen {E}lektrons},
  author={Uhlenbeck, George E and Goudsmit, Samuel},
  journal={Die Naturwissenschaften},
  volume={13},
  number={47},
  pages={953--954},
  year={1925},
  publisher={Springer-Verlag Berlin/Heidelberg}
}

@article{thomas1926motion,
  title={The {M}otion of the {S}pinning {E}lectron},
  author={Thomas, Llewellyn H},
  journal={Nature},
  volume={117},
  number={2945},
  pages={514--514},
  year={1926},
  publisher={Nature Publishing Group UK London}
}

@article{frenkel1926elektrodynamik,
  title={Die {E}lektrodynamik des rotierenden {E}lektrons},
  author={Frenkel, J},
  journal={Zeitschrift f{\"u}r Physik},
  volume={37},
  number={4},
  pages={243--262},
  year={1926},
  publisher={Springer}
}

@book{kramers1964quantum,
  title={Quantum {M}echanics},
  author={Kramers, Hendrik Anthony},
  year={1964},
  publisher={Courier Dover Publications}
}

@article{gerard2000erratum,
  title={Erratum: {H}omogenization limits and {W}igner transforms},
  author={G{\'e}rard, Patrick and Markowich, Peter A and Mauser, Norbert J and Poupaud, Fr{\'e}d{\'e}ric},
  journal={Comm. Pure Appl. Math.},
  volume={53},
  number={2},
  pages={280--281},
  year={2000},
  publisher={Wiley Online Library}
}

@article{dirac1928quantum,
  title={The {Q}uantum {T}heory of the {E}lectron},
  author={Dirac, Paul Adrien Maurice},
  journal={Proceedings of the Royal Society of London. Series A, Containing Papers of a Mathematical and Physical Character},
  volume={117},
  number={778},
  pages={610--624},
  year={1928},
  publisher={The Royal Society London}
}

@book{thaller1992dirac,
  title={The {D}irac {E}quation},
  author={Thaller, Bernd},
  year={1992},
  publisher={Springer Science \& Business Media}
}

@book {MajdaBerto,
    AUTHOR = {Majda, Andrew J. and Bertozzi, Andrea L.},
     TITLE = {Vorticity and incompressible flow},
    SERIES = {Cambridge Texts in Applied Mathematics},
    VOLUME = {27},
 PUBLISHER = {Cambridge University Press, Cambridge},
      YEAR = {2002},
     PAGES = {xii+545},
      ISBN = {0-521-63057-6; 0-521-63948-4},
   MRCLASS = {76-02 (35Q30 35Q35 76B03 76D03 76D05)},
  MRNUMBER = {1867882},
MRREVIEWER = {Yuxi\ Zheng},
}

\end{document}